\DeclareMathOperator{\C}{\mathbb{C}}
\DeclareMathOperator{\Z}{\mathbb{Z}}
\DeclareMathOperator{\F}{\mathbb{F}}
\DeclareMathOperator{\omi}{\overline{\overline{\mu_{\textit{i}}}}}
\DeclareMathOperator{\omj}{\overline{\overline{\mu_{\textit{j}}}}}
\DeclareMathOperator{\on}{\overline{\overline{\nu}}}
\DeclareMathOperator{\FG}{\mathbb{F}G}
\DeclareMathOperator{\lm}{\lambda}
\DeclareMathOperator{\GL}{\text{GL}}
\DeclareMathOperator{\CGL}{\mathbb{C}\text{GL}}
\DeclareMathOperator{\FGL}{\mathbb{F}\text{GL}}
\DeclareMathOperator{\Hom}{Hom}
\DeclareMathOperator{\rank}{rank}
\DeclareMathOperator{\diag}{diag}
\DeclareMathOperator{\im}{Im}
\newtheorem{thm}{Theorem}[section]
\newtheorem{prop}[thm]{Proposition}
\newtheorem{lem}[thm]{Lemma}
\theoremstyle{remark}
\newtheorem{rem}[thm]{Remark}
\theoremstyle{definition}
\newtheorem{df}[thm]{Definition}
\newtheorem{exmp}[thm]{Example}
\title{ \textbf{\normalsize{COMPOSITION MULTIPLICITIES OF TENSOR PRODUCTS AND BRANCHING MULTIPLICITIES FOR $\GL(n)$}}}
\author{\normalsize \textsc{
		M.G. NORRIS}}
		\date{\small{University of Manchester} }
\begin{document}
\allowdisplaybreaks
\maketitle

\begin{abstract}
Following work of Brundan and Kleshchev \cite{BRUNDAN2000}, which considered tensor products with the natural module (and its dual) for $\GL(n)$, we take the next fundamental module and explore the relationship between multiplicities of composition factors of tensor products of simple modules for $\GL(n)$ and branching multiplicities. In doing this we provide a method for determining the composition multiplicities for some composition factors of tensor products of simple modules with the wedge square of the dual natural module.
\end{abstract} 

\section{\normalsize{INTRODUCTION}}

Understanding the tensor products of modules $M, M'$ for a group algebra along with the tensor product space $M^{\otimes r}$ is a fundamental problem in representation theory. One important part of this is understanding their composition series and in particular the multiplicities of composition factors. 

One case that is well understood is the composition factors of the tensor products of simple $\CGL_n(\C)$-modules. Here the multiplicities are given by the famous Littlewood-Richardson coefficients. The simple (rational) $\CGL_n(\C)$-modules are parametrised by (weakly) decreasing $n$-tuples of integers. Let $V_n(\lm)$ denote the simple $\CGL_n(\C)$-module corresponding to a decreasing $n$-tuple $\lm$. For suitable tuples $\lm, \mu$ and $\nu$ the Littlewood-Richardson rule gives us a way to input these tuples and output the multiplicity of the composition factor $V_n(\nu)$ in the tensor product $V_n(\lm) \otimes V_n(\mu)$ (see for example \cite{HOWE2012}). The Littlewood-Richardson coefficients also describe the composition multiplicities of the restriction of simple $\CGL_n(\C)$-modules to a Levi subgroup. We call the composition multiplicities arising in this way (from the restrictions to subgroups) branching multiplicities. 

If $\F$ is an algebraically closed field of positive characteristic then remarkably little is known in comparison about the composition factors of tensor products of (rational) $\FGL_n(\F)$-modules. Again the simple modules are parametrised by decreasing $n$-tuples of integers. Let $L_n(\lm)$ denote the simple $\FGL_n(\F)$-module corresponding to a decreasing $n$-tuple $\lm$. Let $V_n$ denote the natural $n$-dimensional $\FGL_n(\F)$-module and let $V_n^*$ denote its dual. In \cite{BRUNDAN2000} Brundan and Kleshchev study the composition multiplicities of $L_n(\lm) \otimes V_n^*$ for certain composition factors, more details of which are discussed below. The aim of this article is to say something about the composition multiplicities of $L_n(\lm) \otimes \bigwedge\nolimits^2 V_n^*$. In particular, we aim to describe a way to input a $\lm$ and a very particular $\mu$ and output the multiplicity of $L_n(\mu)$ as a composition factor of $L_n(\lm) \otimes \bigwedge\nolimits^2 V_n^*$. For $\lm = (\lm_1, \dots, \lm_n) \neq \underline{0}$ such that $\lm_n =0$ let $\lm_s$ be the last nonzero entry in $\lm$. For $1 \leq a \leq n$ let $\epsilon_a$ be the $n$-tuple with a $1$ in the $a^{th}$ entry and $0$ elsewhere. In Section \ref{mainsec} we describe a method which inputs the tuple $\lm$ and some $1 \leq i \leq s-1$ and outputs a value $t_s^i$ for which the following result holds. 

\begin{thm}\label{main}
Suppose the characteristic of $\F$ is a prime $p >2$. Let $\lm = (\lm_1, \dots, \lm_n) \neq \underline{0}$ be a decreasing $n$-tuple such that $\lm_s$ is the last nonzero entry in $\lm$ and $s <n-1$ . Suppose either $\lm_s =1$ or $s<n-2$. Finally, suppose that  $s-\lm_s \not\equiv n \mod p$ and for a fixed $1 \leq i \leq s-1$ that  $i-\lm_i \not\equiv n \mod p$. Then, $$[L_n(\lm) \otimes \bigwedge\nolimits^2 V^*_n : L_n(\lm - \epsilon_i - \epsilon_s)] = t_s^i.$$
\end{thm}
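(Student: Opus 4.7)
\noindent The plan is to exploit the characteristic-$p$ decomposition $V_n^* \otimes V_n^* \cong \bigwedge\nolimits^2 V_n^* \oplus S^2 V_n^*$, which is valid since $p > 2$, and so reduce the question to two successive tensorings with $V_n^*$. Each of these is controlled by the Brundan--Kleshchev analysis of $L_n(\lambda) \otimes V_n^*$ from \cite{BRUNDAN2000}, which identifies composition factors of $L_n(\lambda) \otimes V_n^*$ with branching multiplicities for the restriction from $\GL_n$ to its standard Levi $\GL_{n-1}$. Iterating this analysis should let me read off multiplicities in $L_n(\lambda) \otimes V_n^* \otimes V_n^*$ as double branching data, and then separating the symmetric and antisymmetric summands will isolate the $\bigwedge\nolimits^2 V_n^*$ contribution.

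Concretely, I would first apply the Brundan--Kleshchev result to find which composition factors of the form $L_n(\lambda - \epsilon_j)$ appear in $L_n(\lambda) \otimes V_n^*$; only those with $j \in \{i, s\}$ can produce the target $L_n(\lambda - \epsilon_i - \epsilon_s)$ after a second tensoring by $V_n^*$. Second, I would apply the same theorem to each of $L_n(\lambda - \epsilon_i) \otimes V_n^*$ and $L_n(\lambda - \epsilon_s) \otimes V_n^*$, collecting the multiplicity of $L_n(\lambda - \epsilon_i - \epsilon_s)$ in the double tensor product. Third, I would split according to the decomposition of $V_n^* \otimes V_n^*$: because the target weight $\lambda - \epsilon_i - \epsilon_s$ is antisymmetric in the positions $(i,s)$, the wedge-square contribution can be extracted by an alternating combination of the two orderings of the box removals, which is exactly the value $t_s^i$ assembled combinatorially in Section \ref{mainsec}.

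The hypotheses of the theorem should line up with those of Brundan--Kleshchev in both iterations: $s - \lambda_s \not\equiv n \pmod p$ and $i - \lambda_i \not\equiv n \pmod p$ are the standard non-criticality conditions ensuring the relevant branching multiplicities are given by the generic formula (they say the removable nodes at rows $s$ and $i$ avoid the affine reflecting hyperplanes modulo $p$), while $s < n-1$ together with the auxiliary condition $\lambda_s = 1$ or $s < n-2$ prevents collisions between the two removed boxes and with the zero tail of $\lambda$. The principal obstacle I foresee is controlling the iteration: Brundan--Kleshchev only compute composition multiplicities, not the module structure of $L_n(\lambda) \otimes V_n^*$, so iterating is not automatic. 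I would need either to establish vanishing of the relevant $\mathrm{Ext}^1$ groups between the composition factors that appear at the first stage, or to work at the level of formal characters and good filtrations and show that the would-be correction terms cancel precisely thanks to the non-criticality hypotheses. This is where I expect the bulk of the technical work to sit, and where the interplay between the combinatorial definition of $t_s^i$ and the branching rule must be verified most carefully.
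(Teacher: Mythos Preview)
Your strategy is genuinely different from the paper's and, as written, has real gaps.

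The paper never iterates Brundan--Kleshchev or decomposes $V_n^* \otimes V_n^*$. Instead it builds an explicit $\GL(n-2)$-isomorphism $(L_n(\lambda)\otimes\bigwedge\nolimits^2 V_n^*)^{U_I^+}\cong L_n(\lambda)^{(1,1)\times}$ (Proposition~\ref{iso}) via a hand-made map $e_I$, then proves Theorem~\ref{key} relating the tensor multiplicity to a $\GL(n-2)$-branching multiplicity in $L_n(\lambda)^{(1,1)\times}$. The quantity $t_s^i$ is then the output of a direct weight-space computation (Theorems~\ref{weightsspaces}, \ref{rankdims}, \ref{multnm2}): it is defined recursively from ranks of explicit integer matrices built from the parts of $\lambda$, not from any two-step box-removal count. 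The hypotheses $s-\lambda_s\not\equiv n$ and $i-\lambda_i\not\equiv n$ enter only to check the linkage condition~(3) of Theorem~\ref{key} via a $cont_\alpha$ argument.

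In your route, two steps fail as stated. First, the iteration: tensoring is exact, so
\[
[L_n(\lambda)\otimes (V_n^*)^{\otimes 2}:L_n(\mu)]=\sum_{\nu}[L_n(\lambda)\otimes V_n^*:L_n(\nu)]\cdot[L_n(\nu)\otimes V_n^*:L_n(\mu)],
\]
but the sum runs over \emph{all} composition factors $L_n(\nu)$, not just those of the form $L_n(\lambda-\epsilon_j)$. Brundan--Kleshchev does not give a complete list of composition factors of $L_n(\lambda)\otimes V_n^*$; it computes the multiplicity only for particular $\mu$ under hypotheses, and many $\nu$ strictly below $\lambda-\epsilon_j$ can occur and then feed $L_n(\lambda-\epsilon_i-\epsilon_s)$ at the second step. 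Your restriction to $j\in\{i,s\}$ is unjustified. Second, separating $\bigwedge\nolimits^2$ from $S^2$: an ``alternating combination of the two orderings'' is not a well-defined operation on composition multiplicities; you would need an independent computation of $[L_n(\lambda)\otimes S^2 V_n^*:L_n(\lambda-\epsilon_i-\epsilon_s)]$, and $S^2 V_n^*$ is not minuscule, so none of the available machinery applies directly. Finally, you give no argument connecting whatever number your procedure outputs to the specific recursion $t_s^i=(m_s^i-w_s^i)-\sum_{r>i}t_s^r x_{i,r}$ that defines $t_s^i$ in Section~\ref{mainsec}.
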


The idea behind the proof of this result comes from a \say{modular Littlewood-Richardson philosophy}. That is the idea that the multiplicities of composition factors of both tensor products of simple $\FGL_n(\F)$-modules and their restriction to standard Levi subgroups can be given in terms of similar combinatorics (see \cite{BRUNDAN1999}). This would mirror the case of simple $\CGL_n(\C)$-modules where the Littlewood-Richardson coefficients describe both. 

Consider the Levi subgroup for $\GL_n(\F)$ given by $\GL_{n-1}(\F) \times \GL_{1}(\F)$. In \cite{BRUNDAN2000} Brundan and Kleshchev define a $\GL_{n-1}(\F)$-invariant subspace $ L_n(\lm)^{(1)} \subset L_n(\lm)$. They show that under some conditions on the decreasing $n$-tuples $\lm$ and $\mu$, the multiplicity of $L_n(\mu)$ as a composition factor of $L_n(\lm) \otimes V_n^*$ is equal to the the multiplicity of $L_{n-1}(\overline{\mu})$ as a composition factor of $ L_n(\lm)^{(1)}$. Here $\overline{\mu}$ is the $n-1$-tuple derived from $\mu$ by removing the $n^{th}$ entry. The subspace $ L_n(\lm)^{(1)}$ is a sum of weight spaces whose dimensions have been described combinatorially in earlier work of Kleshchev in \cite{KLESH1997}. Combining this work in  \cite{BRUNDAN2000} the authors were able to describe the multiplicity of $L_n(\lm - \epsilon_i)$ as a composition factor of $L_n(\lm) \otimes V_n^*$ in terms of the combinatorics of $\lm$. 

In this paper we prove an analogous, but slightly more restrictive, result which relates the multiplicities of composition factors of $L_n(\lm) \otimes \bigwedge\nolimits^{2} V_n^*$ and composition factors of a $\GL_{n-2}(\F)$-invariant subspace of $L_n(\lm)$ which we denote by $L_n(\lm)^{(1,1) \times}$. For an $n$-tuple $\mu$ let $|\mu| = \sum_{i=1}^{n} \mu_i$ and let $\overline{\overline{\mu}}$ denote the $(n-2)$-tuple attained by dropping the last two entries of $\mu$. We write $\lm \nsim \mu$ if $\lm$ and $\mu$ are not linked in the sense of Section \ref{gln}. Our main result which is fundamental in the proof of Theorem \ref{main} is the following. 

\begin{thm}\label{key} 
Suppose the characteristic of $\F$ is a prime $p >2$. Let  $\mu, \lm \in \Z^n$ be decreasing $n$-tuples such that \begin{enumerate} \setlength\itemsep{-0.5em} \item $|\mu| = |\lm|-2$, \item $\mu_{n-2} = \mu_{n} =\lm_n=\lm_{n-1}=0$ and \item $\mu \nsim \lm - \epsilon_i - \epsilon_n$ for all $1 \leq i \leq n-1$ such that $ \lm - \epsilon_i - \epsilon_n$ is weakly decreasing. \end{enumerate}   Then $$[L_{n}(\lm) \otimes \bigwedge\nolimits^{2} V_n^*: L_{n}(\mu)] = [ L_{n}(\lm)^{(1,1) \times} : L_{n-2}(\overline{\overline{\mu}}) ].$$ \end{thm}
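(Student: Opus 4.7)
The plan is to adapt the framework of Brundan and Kleshchev \cite{BRUNDAN2000} by replacing the Levi decomposition $\GL_{n-1}(\F) \times \GL_1(\F) \subset \GL_n(\F)$ (natural for $V_n^*$) with the Levi $L := \GL_{n-2}(\F) \times \GL_2(\F)$ (natural for $\bigwedge^{2} V_n^*$). The subspace $L_n(\lm)^{(1,1) \times}$ should be defined analogously to Brundan and Kleshchev's $L_n(\lm)^{(1)}$: a $\GL_{n-2}(\F)$-invariant subspace of $L_n(\lm)$ singling out weight vectors with a prescribed $\GL_2$-weight on the last two coordinates, with the $\times$ selecting the part that matches the one-dimensional $\GL_2$-summand $\bigwedge^{2} V_2^*$. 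The first step is therefore to make this definition precise and verify its $\GL_{n-2}$-invariance.

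The central ingredient is the $L$-module decomposition
$$\bigwedge\nolimits^{2}V_n^* \big|_{L} \;\cong\; \bigwedge\nolimits^{2} V_{n-2}^* \;\oplus\; \bigl(V_{n-2}^* \otimes V_2^*\bigr) \;\oplus\; \bigwedge\nolimits^{2} V_2^*,$$
which upgrades to a three-step $P$-module filtration, where $P \supset L$ is the corresponding standard parabolic. Using tensor-hom adjunction together with Frobenius reciprocity for the restriction from $P$ to $L$, the multiplicity $[L_n(\lm) \otimes \bigwedge^{2}V_n^* : L_n(\mu)]$ should break up according to this filtration into three contributions tracked by the Levi factors above: $\bigwedge^{2} V_{n-2}^*$ contributes composition factors $L_n(\nu)$ with $\nu = \lm - \epsilon_i - \epsilon_j$ for $i < j \leq n-2$; $V_{n-2}^* \otimes V_2^*$ contributes $\nu$ with exactly one box removed from each of the two coordinate ranges; and $\bigwedge^{2} V_2^*$ contributes $\nu = \lm - \epsilon_{n-1} - \epsilon_n$.

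By the non-linkage hypothesis (3), $\mu$ is not linked to $\lm - \epsilon_i - \epsilon_n$ for any $1 \leq i \leq n-1$, which eliminates any contribution to $[L_n(\lm) \otimes \bigwedge^{2} V_n^*: L_n(\mu)]$ from both the $\bigwedge^{2} V_2^*$ summand (the case $i = n-1$) and the portion of the $V_{n-2}^* \otimes V_2^*$ summand that removes a box at coordinate $n$. The remaining portion (removing a box at coordinate $n-1$) is excluded because $\lm_{n-1} = 0$ from condition (2) forces such weights to be non-dominant, and their reflection to dominant form links them to weights already ruled out by (3). Consequently, only the $\bigwedge^{2} V_{n-2}^*$ summand contributes, and since $\GL_2(\F)$ acts trivially on it, its contribution factors through restriction to $\GL_{n-2}(\F)$ and should evaluate to exactly $[L_n(\lm)^{(1,1) \times} : L_{n-2}(\overline{\overline{\mu}})]$. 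The principal technical obstacle is executing this argument at the level of composition series rather than characters: one must show that the $P$-module filtration of $\bigwedge^{2} V_n^*$ does not create spurious extensions between its Levi factors that would alter which composition factors $L_n(\mu)$ actually appear in $L_n(\lm) \otimes \bigwedge^{2} V_n^*$ (as opposed to in the associated graded), and that $L_n(\lm)^{(1,1) \times}$ captures the $\bigwedge^{2} V_{n-2}^*$ contribution exactly, with neither over- nor under-counting.
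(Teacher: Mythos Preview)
Your high-level strategy (separate by linkage, then analyse via a Levi) points in the right direction, but the route is genuinely different from the paper's and the step you yourself flag as the ``principal technical obstacle'' is left unexecuted. The appeal to ``tensor-hom adjunction together with Frobenius reciprocity'' never becomes a concrete mechanism: a $P$-filtration of $\bigwedge^2 V_n^*$ does not tensor up to a $\GL(n)$-filtration of $L_n(\lm) \otimes \bigwedge^2 V_n^*$, and restricting everything to $L$ trades the problem for the very branching question at hand. In particular, your final assertion --- that the surviving $\bigwedge^2 V_{n-2}^*$ contribution ``should evaluate to exactly $[L_n(\lm)^{(1,1)\times} : L_{n-2}(\overline{\overline{\mu}})]$'' because $\GL(2)$ acts trivially on that summand --- is the whole content of the theorem and is not explained by triviality of the $\GL(2)$-action: trivial $\GL(2)$-action on $\bigwedge^2 V_{n-2}^*$ would, if anything, pair it with the $(0,0)$-level of $L_n(\lm)$, whereas the appearance of the $(1,1)\times$ level on the right-hand side is precisely the non-obvious point that requires proof.

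The paper's approach is more explicit and quite different in detail. It uses the smaller Levi $\GL(n-2) \times \GL(1) \times \GL(1)$ with unipotent radical $U_I^+$, and constructs by hand a $\GL(n-2)$-homomorphism $e_I : M \to M \otimes \bigwedge^2 V_n^*$ (leading term $m \mapsto m \otimes X_{n-1} \wedge X_n$, with correction terms built from $E_{i,n-1}m$, $E_{i,n}m$, and $\tfrac{1}{2}(E_{jn}E_{i,n-1} - E_{in}E_{j,n-1})m$), which Proposition~\ref{iso} shows restricts to an isomorphism $M^{(0,0)} \oplus M^{(1,0)} \oplus M^{(0,1)\times} \oplus M^{(1,1)\times} \cong (M \otimes \bigwedge^2 V_n^*)^{U_I^+}$; a weight count then isolates the $(1,1)\times$ piece as the only one seeing $\overline{\overline{\mu}}$. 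Separately, Lemma~\ref{poly1} proves $[M : L_n(\mu)] = [M^{U_I^+} : L_{n-2}(\overline{\overline{\mu}})]$ whenever $M$ is polynomial of degree $|\mu|$ with $\mu_{n-1} = \mu_n = 0$. Since $N := L_n(\lm) \otimes \bigwedge^2 V_n^*$ is not polynomial, the paper splits $N = N' \oplus N''$ by linkage (as you suggest) and then shows $N''$ is polynomial by building a genuine $\GL(n)$-filtration of $N$ itself (Lemma~\ref{BKlem}) with factors that are quotients of $\Delta_n(\lm - \epsilon_x - \epsilon_y)$ over $\lm$-removable pairs; the factors with $y = n$ fall into $N'$ by linkage, and the remaining factors are polynomial. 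This $\GL(n)$-filtration of the tensor product, together with the explicit isomorphism $e_I$, is what does the work your Frobenius-reciprocity sketch leaves undone.
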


We expect that the method used in this paper could be applied to produce a generalisation of Theorem \ref{key} for tensor products $L_n(\lm) \otimes \bigwedge\nolimits^i V_n^*$ for arbitrary $i$, but the conditions on  $\lm$ and $\mu$ will become more and more restrictive. 

In Section 2 we fix notation and recall some preliminary results from the literature. Then in Section 3 we introduce and define the key objects of interest and prove an important isomorphism. In Section \ref{keysec} we prove Theorem \ref{key}. 

It is well known that the dimensions of all weight spaces for $L_n(\lm)$ are given in terms of ranks of matrices. In Section \ref{mainsec} we give an algorithm for constructing specific matrices. We then use this to prove Theorem \ref{main}. Although the use of the results in Section \ref{mainsec} to deduce Theorem \ref{main} parallel the use of Kleshchev's results in  \cite{KLESH1997} to describe the composition factors of $L_n(\lm) \otimes V_n^*$ in  \cite{BRUNDAN2000} they should not be compared with the results in \cite{KLESH1997} which are much stronger. 

\section*{\normalsize{\textit{Acknowledgements}}} Part of this work is based on work from the author's PhD. The author gratefully acknowledges Martin Liebeck for many helpful conversations during their PhD and for patiently reviewing preliminary versions of this work. The author would also like to acknowledge  Alexander Kleshchev for several significant conversations which guided this work. This work was supported both by the Engineering and Physical Sciences Research Council [EP/L015234/1] through the EPSRC Centre for Doctoral Training in Geometry and Number Theory (London School of Geometry and Number Theory) and by Heilbronn Institute for Mathematical Research via
the University of Manchester.

\section{\normalsize{NOTATION AND PRELIMINARIES}}

\subsection{The representation theory of $\GL_n(\F)$}\label{gln}

Let $p>0$ be the characteristic of the algebraically closed field $\F$. We will abbreviate $\GL_n(\F)$ to $\GL(n)$ and refer to $\FGL(n)$-modules as $\GL(n)$-modules. Let us briefly recall some of the structure of $\GL(n)$ detailed in sources such as \cite{JAN2003} and \cite{MALLE2011} and fix notation that will be used throughout. 

Let $y_{ij}$ for $1 \leq i,j \leq n$ denote the $n$ by $n$ matrix with a $1$ in the $(i,j)^{th}$ entry and $0$ elsewhere. The set $\{ y_{ij} | 1 \leq i,j \leq n\}$ forms a basis for the $\F$-algebra $M_n(\F).$ Let $x_{ij}: M_n(\F) \rightarrow \F$ for $1 \leq i,j \leq n$ be the maps that pick out the $(i,j)^{th}$ matrix coefficient for a matrix in $M_n(\F).$

Let $T(n)$ denote the torus of diagonal matrices in $\GL(n)$ and let $B(n)$ denote the Borel subgroup of upper triangular matrices in $\GL(n)$. Let $X(n)$ denote the character group of $T(n)$ which is generated by the characters $\epsilon_i := x_{ii}|_{T(n)}$ for $1 \leq i \leq n.$ Let $Y(n)$ denote the cocharacter group of $T(n)$ which is generated by the cocharacters $\epsilon'_i : a \mapsto a y_{ii} + \sum_{j \neq i} y_{jj}$ for all $a \in \F^{\times}.$ For $\lm \in X(n)$ and $\phi \in Y(n)$ there exists a unique integer $\langle \lm , \phi \rangle$ such that $\lm \circ \phi : a \mapsto a^{\langle \lm , \phi \rangle}$ for all $a \in \F^{\times}.$ This defines a bilinear pairing $\langle , \rangle : X(n) \times Y(n) \rightarrow \Z$ such that $\langle \epsilon_i, \epsilon'_j \rangle = \delta_{i,j}$.  We can identify both $X(n)$ and $Y(n)$ with the group $\Z^n$ under addition. In particular any character $\chi \in X(n)$ can be written as $\chi = a_1 \epsilon_1 + \dots + a_n \epsilon_n$ indicating that it corresponds to the map $\chi : \diag (t_1, \dots, t_n) \mapsto t_1^{a_1} \dots t_n^{a_n}.$ In this way $\chi$ is identified with the $n$-tuple $(a_1, \dots, a_n.)$ We say that a character $(a_1, \dots, a_n) \in X(n)$ is dominant if $a_i \geq a_{i+1}$ for $1 \leq i \leq n-1$ and we denote by $X^{+}(n)$ the set of dominant characters for $\GL(n).$

Let $W_n$ denote the Weyl group of $\GL(n)$ with respect to $T(n)$ whose action on $T(n)$ induces an action on $X(n)$ which permutes the entries of a character $(a_1, \dots, a_n) \in X(n).$ The root system $R(n)$ is of the form  $\{ \epsilon_i - \epsilon_j | 1 \leq i,j\leq n, i \neq j\}$ and the positive roots $R(n)^{+}$, determined by our choice of Borel, are the roots $\epsilon_i - \epsilon_j$ such that $i<j.$ Let $S(n) \subset R(n)^{+}$ denote the set of simple roots $\alpha_i := \epsilon_i - \epsilon_{i+1}$ for $1 \leq i <n.$ There is a dominance ordering on the set $X(n)$ in which we say for $\lm, \mu \in X(n)$ that $\mu$ is subdominant to $\lm$ and write $\mu \leq \lm$ if and only if $\lm - \mu \in \sum_{\alpha \in S(n)} \Z_{\geq 0} \alpha.$ Let $G_a := (\F, +)$ be the additive group of the field $\F$. For each $\alpha \in R(n)$ there exists a morphism of algebraic groups $u_{\alpha}: G_a \rightarrow \GL(n)$ which induces an isomorphism onto $u_{\alpha}(G_a)$ such that $t u_{\alpha}(c)t^{-1}= u_{\alpha}(\alpha(t)c)$ for all $t \in T(n)$ and $c \in \F$. We call $U_{\alpha} := \im(u_{\alpha})$ the root subgroup of $\GL(n)$ corresponding to $\alpha \in R(n)$. 

A $\GL(n)$-module $V$ has a direct sum decomposition $V = \bigoplus V_{\chi}$ where $V_{\chi} := \{v \in V |  \hspace{0.3cm} tv = \chi(t) v \hspace{0.3cm} \forall t \in T(n)\}$. We say $\chi \in X(n)$ is a weight for $V$ if $V_{\chi} \neq 0.$ We call  $\dim V_{\chi}$ the multiplicity of $\chi$ in $V$.

As defined in [II.2, \cite{JAN2003}]  the simple, Weyl and co-Weyl $\GL(n)$-modules are highest weight modules parametrised by  dominant weights $\lm \in X^{+}(n)$ which we denote $L_n(\lm), \Delta_n(\lm)$ and $\nabla_n(\lm)$ respectively. In each of $L_n(\lm), \Delta_n(\lm)$ and $\nabla_n(\lm)$  the weight $\lm \in X^{+}(n)$ occurs with multiplicity $1$ and all other weights for these modules are subdominant to $\lm$. Note that $L_n(\lm)$ is isomorphic to the socle of $\nabla_n(\lm)$ and the cosocle of $\Delta_n(\lm)$ and $\dim \nabla_n(\lm) = \dim \Delta_n(\lm)$. Also note that subtracting simple roots from a weight $\lm$ cannot decrease the last entry of $\lm$. From this we easily deduce the following lemma.

\begin{lem}\label{neg} Let $\lm$ be a weight in  $X^{+}(n)$ and suppose that $\lm_n \geq 0$. Then there is no weight $\mu$ for $L_n(\lm)$, $\nabla_n(\lm)$ or $\Delta_n(\lm)$ such that $\mu$ has a negative entry. \end{lem}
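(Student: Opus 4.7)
The plan is to reduce everything to the dominant case via the Weyl group action. For each of the modules $L_n(\lm), \Delta_n(\lm), \nabla_n(\lm)$, the set of weights is stable under $W_n$ (and in fact the multiplicities are $W_n$-invariant). Since $W_n$ acts on $X(n) = \Z^n$ by permuting entries, given any weight $\mu$ of one of these modules we can find $w \in W_n$ with $w\mu$ weakly decreasing, i.e.\ dominant, and $w\mu$ is a permutation of $\mu$. Hence it suffices to show that every dominant weight of $L_n(\lm)$, $\Delta_n(\lm)$ or $\nabla_n(\lm)$ has all entries non-negative.

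Next, I would invoke the fact recalled just above the lemma: every weight $\nu$ of one of these three modules is subdominant to $\lm$, so $\nu = \lm - \sum_{i=1}^{n-1} c_i \alpha_i$ for some non-negative integers $c_i$, where $\alpha_i = \epsilon_i - \epsilon_{i+1}$. The only simple root whose $n$-th coordinate is non-zero is $\alpha_{n-1}$, whose $n$-th coordinate is $-1$; therefore $\nu_n = \lm_n + c_{n-1} \geq \lm_n \geq 0$. This is precisely the observation, already stated in the excerpt, that subtracting simple roots cannot decrease the last entry.

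Applying this to $\nu = w\mu$, we obtain $(w\mu)_n \geq 0$. Since $w\mu$ is dominant, $(w\mu)_1 \geq (w\mu)_2 \geq \cdots \geq (w\mu)_n \geq 0$, so every entry of $w\mu$ is non-negative. As $\mu$ and $w\mu$ have the same multiset of coordinates, no entry of $\mu$ is negative, which is the desired conclusion.

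There is no genuine obstacle here: the only ingredients are the $W_n$-invariance of the weight set (standard for highest weight modules, cf.\ \cite{JAN2003}) and the simple-root computation isolating the $n$-th coordinate, both of which are essentially reiterated in the text preceding the lemma.
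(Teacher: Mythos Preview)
Your argument is correct and is exactly the approach implicit in the paper: the paper only records the observation that subtracting simple roots cannot decrease the last entry of $\lm$ and declares the lemma an easy consequence, and you have simply made explicit the missing step (passing to a dominant $W_n$-conjugate) that turns non-negativity of the last entry into non-negativity of every entry.
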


The natural $\GL(n)$-module, denoted by $V_n$, is the simple $\GL(n)$-module corresponding to the standard matrix action of $\GL(n)$ on the vector space $F^n$. Let $V_n^*$ denote its dual. Taking $e_1,\dots,e_n$ to be the standard basis of $F^n$ we note that each $e_i$ is an eigenvector for the action of $T$ on $V_n$ via the eigenvalue $\epsilon_i$. Since $e_1$ is clearly the highest weight vector we get $V_n \cong L_n(1,0,\dots,0)$. The set of all $e_{i_1} \wedge e_{i_2}$ with $1 \leq i_1 \leq i_2 \leq n$ defines a basis for $\bigwedge\nolimits^2V_n$ of eigenvectors of $T$ with eigenvalue $\epsilon_{i_1} + \epsilon_{i_2}$ respectively. Since the highest weight among these is $\epsilon_1 \wedge \epsilon_2$ we deduce $\bigwedge\nolimits^2V_n \cong L_n(1,1,0\dots,0)$. Similar arguments with the standard dual basis for $V_n^*$ show that $V_n^* \cong L_n(0,\dots,0,-1)$ and $\bigwedge\nolimits^2V_n^* \cong L_n(0,\dots,0,-1,-1).$

For a dominant weight $\lm \in X^+(n)$ and an element $\alpha \in \Z /p\Z$ we define the integer \begin{align*} 
cont_{\alpha}(\lm):=& | \{(a,b) | 1 \leq a \leq n, 0 < b \leq \lm_a, b-a  \equiv \alpha \mod p\}| \\ &-  | \{(a,b) | 1 \leq a \leq n, \lm_a < b \leq 0, b-a \equiv \alpha  \mod p\}|. \end{align*}  As in [\S 2, \cite{BRUNDAN2000}] we say that two dominant weights $\lm, \mu \in X^+(n)$ are linked and write $\lm \sim \mu$ if $cont_{\alpha}(\lm) = cont_{\alpha}(\mu)$ for all $\alpha \in \Z /p\Z.$ Note that if $\mu$ is in the affine Weyl orbit of $\lm$ then $\mu = \lm - ((\lm_i -i) -(\lm_j - j) - r p) (\epsilon_i - \epsilon_j)$ for some $r \in \Z$ and hence $\lm \sim \mu$. It therefore follows from the linkage principal (Corollary II.6.17 in \cite{JAN2003}) that if $\text{Ext}_{\GL(n)}^1( L_n(\lm), L_n(\mu)) \neq 0$ for some $\lm, \mu \in X^{+}(n)$ then $\lm \sim \mu$. Given $\mu \in X^{+}(n)$, we can decompose a $\GL(n)$-module $M$ as $M= M_1 \oplus M_2$ where $\lm \sim \mu$ for all weights $\lm$ of $M_1$  and $\nu \nsim \mu$ for all weights $\nu$ of $M_2$. 

\subsection{The hyperalgebra for $\GL_n(\F)$}\label{hypgln}

Let us finish this section by briefly recalling some details about the hyperalgebra, denoted $U(n,\F)$, for $\GL(n)$. In \cite{JAN2003} the hyperalgebra is called the algebra of distributions on $\GL(n)$ and denoted $Dist(G)$. The hyperalgebra for $\GL(n)$ can be constructed from $U(n, \C)$, the universal enveloping algebra of $Lie(\GL_n(\C))$. Recall that $y_{ij}$ is the matrix with a $1$ in the $(i,j)^{th}$ entry and $0$ elsewhere. The universal enveloping algebra $U(n, \C)$, is generated by the set $\{ y_{ij} | 1 \leq i, j \leq n\}$ subject to the relations \begin{equation}\label{urels} y_{ih} y_{kj} - y_{kj} y_{ih} = \delta_{kh} y_{ij} - \delta_{ij} y_{kh} \end{equation} for all $1 \leq i,h,k,j \leq n$ (see for example \cite{BRUND}).  

Denote by $U(n, \Z)$ the $\Z$-subalgebra of $U(n,\C)$ spanned by $$\{ y_{ij}^{(r)} , {{y_{ii}}\choose{r}} | 1 \leq i,j \leq n \hspace{0.5cm} i \neq j \hspace{0.5cm} r \geq 0 \}$$ where $ y_{ij}^{(r)} = \frac{y_{ij}^r}{r!}$ and ${{y_{ii}}\choose{r}} = \frac{y_{ii} (y_{ii}-1) \dots (y_{ii} - r+1)}{r!}$ are symbols in the divided power structure. 

The hyperalgebra of $\GL(n)$ can be defined as $U(n, \F) := U(n, \Z) \otimes \F$. For $i<j$ we will denote the element $y_{ij}^{(r)} \otimes 1_{\F}$ by $E_{ij}^{(r)}$ and the element $y_{ji}^{(r)} \otimes 1_{\F}$ by $F_{ij}^{(r)}$. Finally, let $H_{i}^{(r)}$ denote ${{y_{ii}}\choose{r}} \otimes 1_{\F}$ in $U(n, \F)$. When $r=1$ we will write $E_{ij} := E_{ij}^{(1)}$ and $F_{ij} :=F_{ij}^{(1)}$ , $H_{i} = H_{i}^{(1)}$. Define $U^{-}(n), U^{+}(n)$ and $U^0(n)$ to be the unital subalgebras of $U(n, \F)$ generated by $\{F_{ij}^{(r)}\}_{1 \leq i < j \leq n, \hspace{0.3cm} r \geq 0}$, $\{E_{ij}^{(r)}\}_{1 \leq i < j \leq n, \hspace{0.3cm} r \geq 0}$ and $\{H_{i}^{(r)}\}_{1 \leq i \leq n, \hspace{0.3cm} r \geq 0}$ respectively.  We can decompose $U(n,\F) =  U^{-}(n) U^0(n) U^{+}(n)$ (see for example [II.1.12(2),\cite{JAN2003}]). 

Let us recall some useful properties of elements of $U(n,\F)$ 

\begin{lem}\label{relhyp}
The following equalities hold in $U(n, \F)$: \\
For $1 \leq i< j <k \leq n$ and $r >0$, we have, 
\begin{align*} F_{ij}^{(r)} E_{ik} - E_{ik} F_{ij}^{(r)} &= F_{ij}^{(r-1)}E_{jk} \\
F_{ik}^{(r)} E_{ij} - E_{ij} F_{ik}^{(r)} &= F_{ik}^{(r-1)} F_{jk} \\
E_{ik}^{(r)} F_{jk} - F_{jk} E_{ik}^{(r)} &= E_{ik}^{(r-1)}  E_{ij} \\
E_{jk}^{(r)} F_{ik} - F_{ik} E_{jk}^{(r)} &=  E_{jk}^{(r-1)} F_{ij} \\
E_{ij}^{(r)} E_{jk} - E_{jk} E_{ij}^{(r)} &=  E_{ij}^{(r-1)} E_{ik} \\
E_{ij} F_{ij} - F_{ij}E_{ij} &= H_{i} - H_{j} \\
\end{align*}
and for $i \neq h$, $j \neq k$ and $i<j$
\begin{align*}
E_{ij} F_{hk} &= F_{hk} E_{ij} \hspace{2cm} \text{h<k}\\
E_{ij} E_{kh} &= E_{kh} E_{ij}  \hspace{2cm} \text{h>k}. 
\end{align*}
Finally, for $1 \leq i < j \leq n$ and $1 \leq a \leq n$ with $r>0$ we have, 
$$H_a F_{i,j}^{(r)} = F_{i,j}^{(r)} (H_a - \delta_{ai} r + \delta_{aj} r).$$
\end{lem}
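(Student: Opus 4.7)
The plan is to verify each identity first in the complex universal enveloping algebra $U(n,\C)$ and then observe that every element appearing is already in the integral form $U(n,\Z)$, so tensoring with $\F$ transports the identity verbatim to $U(n,\F)$. This reduces everything to manipulations involving the undivided generators $y_{ij}$ and the fundamental commutation relation (\ref{urels}).

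For the base case $r=1$ of each of the first five identities, and for identities (6), (7), (8) (none of which involve divided powers), I would simply apply (\ref{urels}) with the appropriate substitution of indices. For instance, for identity (1) the direct computation $[y_{ji}, y_{ik}] = \delta_{ii} y_{jk} - \delta_{jk} y_{ii} = y_{jk}$ yields $[F_{ij}, E_{ik}] = E_{jk}$, and for identity (6) the relation $[y_{ij}, y_{ji}] = y_{ii} - y_{jj}$ is immediate from (\ref{urels}). The ``far commutator'' identities (7) and (8) fall out at once because the hypotheses $i \neq h$ and $j \neq k$ kill both Kronecker deltas on the right-hand side of (\ref{urels}).

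To extend the first five identities to general $r>0$ I would use the key observation that in each case the nested commutator vanishes: for instance in identity (1) one checks $[F_{ij}, E_{jk}] = [y_{ji}, y_{jk}] = 0$ straight from (\ref{urels}). Hence $E_{jk}$ commutes with every power of $F_{ij}$, and the standard identity $[A^r, B] = r A^{r-1} [A,B]$ (valid whenever $[A,[A,B]] = 0$) gives $F_{ij}^r E_{ik} - E_{ik} F_{ij}^r = r F_{ij}^{r-1} E_{jk}$; dividing by $r!$ and rewriting in divided power notation produces identity (1). The same recipe proves identities (2)--(5), with the relevant nested commutator vanishing by a direct application of (\ref{urels}) in each case.

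For the final $H_a$ identity I would first compute $[H_a, F_{ij}] = [y_{aa}, y_{ji}] = \delta_{aj} y_{ai} - \delta_{ai} y_{ja} = (\delta_{aj} - \delta_{ai}) F_{ij}$, a scalar multiple of $F_{ij}$; then a one-line induction on $r$ shows $H_a F_{ij}^r = F_{ij}^r (H_a + r\delta_{aj} - r\delta_{ai})$, and division by $r!$ gives the stated formula. The only real ``obstacle'' here is bookkeeping with indices and signs when applying (\ref{urels}) --- in particular, checking in each of the five divided-power cases that the nested commutator genuinely vanishes, since this is what justifies lifting the base case to arbitrary $r$ via the simple power--commutator identity rather than having to run a messier induction by hand.
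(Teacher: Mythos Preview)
Your proposal is correct and follows essentially the same approach as the paper: both derive the first five identities by establishing the undivided-power commutator from (\ref{urels}) and then dividing by $r!$, and both read identities (6)--(8) directly off (\ref{urels}). The only minor difference is that the paper cites Jantzen's commutator formula for the final $H_a$ identity whereas you derive it directly, and you are more explicit about why the power--commutator identity $[A^r,B]=rA^{r-1}[A,B]$ applies (the vanishing of the nested commutator), a step the paper leaves implicit.
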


\begin{proof} The final equality follows from the commutator formula which can be found in [II.1.20, \cite{JAN2003}]. The second, third and fourth last equalities follow directly from the relations given by Equation \ref{urels}. The first five follow from the divided power structure applied to the relations given by Equation \ref{urels}. For example it follows from Equation \ref{urels} that $F_{ij}^{r} E_{ik} = E_{ik}F_{ij}^r + r F_{ij}^{r-1}E_{jk}$ and therefore we have \begin{align*} F_{ij}^{(r)} E_{ik} & = \frac{1}{r!} F_{ij}^{r} E_{ik} \\ &=   \frac{1}{r!} E_{ik}F_{ij}^{r} +  \frac{1}{(r-1)!} F_{ij}^{r-1} E_{jk} \\ &= E_{ik}F_{ij}^{(r)} + F_{ij}^{(r-1)} E_{jk}. \end{align*} The other equalities follow similarly. \end{proof}

Any $\GL(n)$-module $M$ is naturally a $U(n,\F)$-module (see [I.7, \cite{JAN2003}]. Let us recall from [II.1.19, \cite{JAN2003}] that elements in $U(n,\F)$ act on the weight spaces $M_{\mu}$ for a $\GL(n)$-module $M$. For $\lm \in X(n)$ and $v \in M_{\lm}$ we have \begin{align*} H_i^{(r)} v &=  {{\langle \lm , \epsilon'_i \rangle}\choose{r}} v \\ E_{ij}^{(r)} v &\in M_{\lm + r (\epsilon_i - \epsilon_j)} \\ F_{ij}^{(r)} v &\in M_{\lm - r (\epsilon_i - \epsilon_j)}. \end{align*}

Let $X_1, \dots, X_n$ be the standard basis for $V_n^{*}$. In [II 2.16, \cite{JAN2003}] Jantzen describes the action of $E_{ab}$ and $F_{ab}$ on each $X_i$ as follows (see also Equation (1) in [$\S 5$, \cite{BRUNDAN2000}]) $$ E_{ab}^{(r)} X_i = -\delta_{r,1} \delta_{a,i} X_b \hspace{1cm} \text{and} \hspace{1cm} F_{ab}^{(r)} X_i = -\delta_{r,1} \delta_{b,i} X_a.$$ From this we can deduce the following lemma. 

\begin{lem}\label{Eab} The set $\{X_i \wedge X_j | 1 \leq i < j \leq n\}$ is a basis for $\bigwedge\nolimits^2 V_n^*$ such that $$ E_{ab}^{(r)} (X_i \wedge X_j) = - \delta_{r,1} \delta_{a,i} X_b \wedge X_j - \delta_{r,1} \delta_{a,j} X_i \wedge X_b$$ and  $$ F_{ab}^{(r)} (X_i \wedge X_j) = - \delta_{r,1} \delta_{b,i} X_a \wedge X_j -\delta_{r,1} \delta_{b,j} X_i \wedge X_a.$$ \end{lem}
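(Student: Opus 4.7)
The basis claim is standard linear algebra for exterior squares and needs no work: a basis of $V_n^*$ yields a basis of $\bigwedge\nolimits^2 V_n^*$ via the wedges $X_i \wedge X_j$ with $i<j$. So the substance of the lemma is the two action formulae.

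My approach is to reduce the computation on $\bigwedge\nolimits^2 V_n^*$ to the already-known action on $V_n^*$ via the coproduct of the hyperalgebra. The key ingredient is that $U(n,\F)$ is a Hopf algebra whose comultiplication on the generators of $U^+(n)$ and $U^-(n)$ satisfies $\Delta(E_{ab}^{(r)}) = \sum_{s+t=r} E_{ab}^{(s)} \otimes E_{ab}^{(t)}$ and analogously for $F_{ab}^{(r)}$ (see [I.7, \cite{JAN2003}]). Realising $X_i \wedge X_j$ as the antisymmetric tensor $X_i \otimes X_j - X_j \otimes X_i$ inside $V_n^* \otimes V_n^*$ and using the coproduct, I would write
\[
E_{ab}^{(r)}(X_i \wedge X_j) \;=\; \sum_{s+t=r} \bigl( E_{ab}^{(s)} X_i \bigr) \wedge \bigl( E_{ab}^{(t)} X_j \bigr),
\]
where I absorb the antisymmetrisation into the wedge notation on the right.

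Now I would substitute the given single-variable formula $E_{ab}^{(r)} X_i = -\delta_{r,1}\delta_{a,i} X_b$ and argue that the sum collapses. The boundary terms $(s,t)=(r,0)$ and $(0,r)$ use $E_{ab}^{(0)}=1$ and $E_{ab}^{(r)} X_\ell = -\delta_{r,1}\delta_{a,\ell}X_b$, so they vanish unless $r=1$, in which case they yield exactly $-\delta_{a,i} X_b \wedge X_j - \delta_{a,j} X_i \wedge X_b$. For any interior term with $s,t \geq 1$, the single-variable formula forces $s=t=1$, hence $r=2$, and the contribution is proportional to $\delta_{a,i}\delta_{a,j} X_b \wedge X_b$, which is zero (either because $i \neq j$ prevents $a=i=j$, or directly because $X_b \wedge X_b = 0$). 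Thus only $r=1$ contributes and the stated formula drops out. The $F_{ab}^{(r)}$ formula follows by the identical argument using the analogous coproduct and the single-variable formula $F_{ab}^{(r)} X_i = -\delta_{r,1}\delta_{b,i}X_a$.

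There is no real obstacle here: the proof is a routine coproduct computation, and the only place one must be slightly careful is handling the $r \geq 2$ terms so as to confirm that the $\delta_{r,1}$ factor in the conclusion is correct. If I wanted to avoid invoking the general coproduct formula, an alternative would be to verify the case $r=1$ by hand from the Leibniz-like action of $E_{ab}$ on a wedge (which is just the derivation property coming from $\Delta(E_{ab}) = E_{ab}\otimes 1 + 1 \otimes E_{ab}$), then observe that for $r \geq 2$ the element $E_{ab}^{(r)}$ shifts weights by $r(\epsilon_a - \epsilon_b)$ with $r\geq 2$, and inspect that no weight of $\bigwedge\nolimits^2 V_n^*$ differs from any other by such an amount — but the coproduct argument is cleaner.
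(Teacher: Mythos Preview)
Your proposal is correct and is essentially the approach the paper takes: the paper does not give a separate proof of this lemma at all, merely stating ``From this we can deduce the following lemma'' immediately after recording the single-variable formulae $E_{ab}^{(r)}X_i=-\delta_{r,1}\delta_{a,i}X_b$ and $F_{ab}^{(r)}X_i=-\delta_{r,1}\delta_{b,i}X_a$. Your coproduct computation is exactly the way to make that deduction precise, and indeed the paper later invokes the very same tensor-action formula $E_{ab}^{(k)}(m\otimes m')=\sum_{r=0}^{k}E_{ab}^{(r)}m\otimes E_{ab}^{(k-r)}m'$ (citing [I.7.11, \cite{JAN2003}]) just before Proposition~\ref{hom}.
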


\section{\normalsize{LEVELS AND THE FIXED POINT SPACE}}

Throughout this section we will assume $p$ is an odd prime and $\F$ is an algebraically closed field of characteristic $p$. In Proposition \ref{iso} in this section, we will construct, for certain $\lm \in X^{+}(n)$, a $\GL(n-2)$-isomorphism between the fixed point space of $L_n(\lm) \otimes V_n^*$ under the unipotent radical of a parabolic subgroup and a subspace $L_n(\lm)^{(1,1) \times} \subset L_n(\lm)$ which restricts to a $\GL(n-2)$ module. Before we proceed with defining the isomorphism we begin by detailing some key results about vectors of particular importance called primitive vectors. 

\subsection{Primitive vectors} 

Recall the notation for the hyperalgebra $U(n, \F)$, from Section \ref{hypgln} which is generated by the elements $E_{i,j}^{(r)}$, $F_{i,j}^{(r)}$ and $H_{i}^{(r)}$ for $1 \leq i < j \leq n$. 

Let $M$ be a $\GL(n)$-module and therefore a module for $U(n,\F)$. A vector $m \in M$ is said to be primitive if $E_{ij}^{(k)}m =0$ for $1 \leq i<j \leq n$ and $k > 0$. A highest weight module $M$ is generated by a weight vector with weight $\lm \in X^{+}(n)$ such that each weight of $M$ is subdominant to $\lm$. It follows from the action of $U(n,\F)$ of the weight space $M_{\lm}$ described in Section \ref{hypgln} that this is equivalent to $M$ being generated by a primitive weight vector $m$ of weight $\lm$. 

\begin{lem}[Lemma II.2.13 in \cite{JAN2003}] \label{prim}
Let $\lm$ be weight in $X^{+}(n)$. Then $\Delta_{n}(\lm)$ is generated by a primitive vector of weight $\lm$, and any $\GL(n)$-module $M$ generated by a primitive vector of weight $\lm$ is a homomorphic image of $\Delta_n(\lm)$.
\end{lem}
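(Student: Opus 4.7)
The plan is to establish both halves via the universal property that characterises $\Delta_n(\lm)$ as the largest rational $\GL(n)$-module generated by a primitive vector of weight $\lm$.

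For the first half, I fix a nonzero $v \in \Delta_n(\lm)_{\lm}$, which is uniquely determined up to a scalar because $\lm$ is a weight of $\Delta_n(\lm)$ with multiplicity one. To see $v$ is primitive, observe that for $1 \leq i < j \leq n$ and $k \geq 1$ the element $E_{ij}^{(k)} v$ lies in $\Delta_n(\lm)_{\lm + k(\epsilon_i - \epsilon_j)}$; this weight strictly exceeds $\lm$ in the dominance ordering and so does not occur in $\Delta_n(\lm)$, forcing $E_{ij}^{(k)} v = 0$. To see $v$ generates $\Delta_n(\lm)$, the triangular decomposition $U(n,\F) = U^-(n) U^0(n) U^+(n)$ together with primitivity and the weight-vector property reduce the cyclic span $U(n,\F) v$ to $U^-(n) v$. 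Since $\Delta_n(\lm) = \nabla_n(-w_0 \lm)^*$ has cosocle $L_n(\lm)$ and $v$ projects non-trivially to this cosocle, a weight-multiplicity comparison forces $U^-(n) v = \Delta_n(\lm)$.

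For the second half, given $M$ generated by a primitive $m \in M_{\lm}$, I would use a Verma-like object as an intermediary. Set $Z(\lm) := U(n,\F) \otimes_{U^{\geq 0}(n)} \F_{\lm}$, where $\F_{\lm}$ is the one-dimensional module on which each $E_{ij}^{(k)}$ acts as zero and each $H_i^{(r)}$ acts by $\binom{\lm_i}{r} \bmod p$. By construction, any $U(n,\F)$-module generated by a primitive vector of weight $\lm$ is a quotient of $Z(\lm)$; applying this to both $\Delta_n(\lm)$ and $M$ yields surjections $Z(\lm) \twoheadrightarrow \Delta_n(\lm)$ and $Z(\lm) \twoheadrightarrow M$, and it suffices to show that the kernel of the former is contained in the kernel of the latter in order to induce the desired map $\Delta_n(\lm) \twoheadrightarrow M$.

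The main obstacle is precisely this containment of kernels. In positive characteristic $\Delta_n(\lm)$ is a proper quotient of $Z(\lm)$ by specific additional relations, and a priori $m \in M$ need not satisfy these. The resolution exploits that $M$ is a rational $\GL(n)$-module, so the cyclic submodule generated by $m$ is locally finite. The cleanest route then uses Frobenius reciprocity: the dual vector $m^* \in M^*$ has weight $-\lm$ and is annihilated by an appropriate unipotent subgroup, giving a $B(n)$-equivariant map that extends to a $\GL(n)$-embedding $M^* \hookrightarrow \nabla_n(-w_0 \lm)$, whose dual is the required surjection $\Delta_n(\lm) \twoheadrightarrow M$. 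This geometric step is the delicate one and draws on the algebraic-group machinery developed in \cite{JAN2003}.
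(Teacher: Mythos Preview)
The paper does not supply a proof of this lemma; it is quoted directly from \cite[II.2.13]{JAN2003} and used as a black box throughout. There is therefore nothing in the paper itself to compare against beyond the citation.

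Your first half is correct: the weight-$\lm$ vector $v$ is primitive because $\lm + k(\epsilon_i-\epsilon_j)$ is not subdominant to $\lm$, and $U^-(n)v$ is all of $\Delta_n(\lm)$ because the simple head $L_n(\lm)$ forces every proper submodule into the radical, which carries no $\lm$-weight.

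Your second half follows Jantzen's own route (Frobenius reciprocity for the induced module realising $\nabla_n(-w_0\lm)$), but the bookkeeping is muddled. There is no canonical ``dual vector $m^*$'', and a $B(n)$-stable line \emph{inside} $M^*$ is the wrong input: writing $B^-$ for the lower-triangular Borel, reciprocity identifies $\Hom_{\GL(n)}(M^*,\nabla_n(-w_0\lm))$ with $B^-$-equivariant maps \emph{out of} $M^*$ into the one-dimensional module of weight $-w_0\lm$, not with $B(n)$-sublines of weight $-\lm$. Unwinding the dualities, what is actually needed is a vector of weight $w_0\lm$ in $M$ fixed by the lower unipotent. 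This exists because $M=U^-(n)m$ has $W_n$-stable weight set bounded above by $\lm$, hence bounded below by $w_0\lm$, and any lowest-weight vector is automatically fixed by the lower unipotent. The resulting nonzero map $\Delta_n(\lm)\to M$ sends the generator to a nonzero multiple of $m$ (since $\dim M_\lm=1$) and is therefore surjective. So your strategy is the standard one and is correct in outline; only the direction of the $B$-map and the weight on the dual side need repair.
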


In particular the dimension of the space of all primitive vectors in a $\GL(n)$-module $M$ with weight $\lm$ is equal to $\dim \Hom_{\GL(n)}(\Delta_n(\lm),M).$ We can use this to describe the weights of primitive vectors but first let us make the following definitions. 

\begin{df}
A good filtration of a $\GL(n)$-module $M$ is an ascending filtration $0 = M_{(0)} < M_{(1)} < \dots < M_{(a)} = M$ for submodules $M_{(i)}$ of $M$, where each factor $M_{(i)}/ M_{(i-1)}$ is isomorphic to a direct sum of copies of $\nabla_n(\lm^{(i)})$ for some $\lm^{(i)} \in X^{+}(n)$.
\end{df}

We say that a weight $\epsilon$ is minuscule if there are no dominant weights $\lm \in X^{+}(n)$ such that $\lm < \epsilon$. With these definitions we can now recall the following result.
\begin{lem}[Lemma 4.8 in \cite{BRUNDAN2000}]\label{minweigt}
Let $\epsilon \in X^{+}(n)$ be a minuscule weight. For any $\lm \in X^{+}(n)$ the $\FG$-module $\nabla_n(\lm) \otimes L_n(\epsilon)$ has a good filtration with factors in $$\{ \nabla_n(\lm + w \epsilon) |\hspace{0.3cm} \text{for all} \hspace{0.3cm} w \in W_n \hspace{0.3cm} \text{such that} \hspace{0.3cm} \lm + w \epsilon \in X^{+}(n) \}$$ each occurring with multiplicity $1$. 
Furthermore for any $\mu \in X^{+}(n)$ we have \begin{equation*} \Hom_G(\Delta_n(\mu), \nabla_n(\lm) \otimes L_n(\epsilon)) = \begin{cases} \F \hspace{2cm} &\text{if} \hspace{0.5cm} \mu \in \lm + W_n \epsilon, \\ 0 &\text{otherwise.} \end{cases} \end{equation*}
\end{lem}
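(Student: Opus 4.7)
The plan is to exploit the fact that minuscule simples coincide with their Weyl and co-Weyl modules, and then invoke the general fact that a tensor product of modules with good filtrations again admits a good filtration.

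First, because $\epsilon$ is minuscule there is no dominant weight strictly below $\epsilon$, so the Weyl module $\Delta_n(\epsilon)$ has simple head $L_n(\epsilon)$ and no other composition factors. Hence $\Delta_n(\epsilon) = L_n(\epsilon) = \nabla_n(\epsilon)$, and the weights of $L_n(\epsilon)$ are precisely the Weyl orbit $W_n \epsilon$, each with multiplicity one. Since $\nabla_n(\lm)$ trivially has a good filtration and so does $L_n(\epsilon) = \nabla_n(\epsilon)$, I would invoke the tensor-product theorem for good filtrations (II.4.21 of \cite{JAN2003}) to conclude that $\nabla_n(\lm) \otimes L_n(\epsilon)$ admits a good filtration.

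Next, to pin down which factors appear and with what multiplicity, I use the standard fact that for any module $M$ with a good filtration the multiplicity of $\nabla_n(\nu)$ satisfies
$$(M : \nabla_n(\nu)) = \dim \Hom_{\GL(n)}(\Delta_n(\nu), M),$$
which is a consequence of $\text{Ext}^1_{\GL(n)}(\Delta_n(\nu), \nabla_n(\nu')) = 0$ for all pairs of dominant weights (II.4.13 of \cite{JAN2003}). In particular these multiplicities are computable from characters, since the character of $\nabla_n(\lm) \otimes L_n(\epsilon)$ is $\chi(\lm)\chi(\epsilon)$, where $\chi(\cdot)$ denotes the Weyl character, and the good-filtration multiplicities are the coefficients in its decomposition as a nonnegative combination of $\chi(\nu)$'s.

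The main step is the classical minuscule Pieri-type identity
$$\chi(\lm)\chi(\epsilon) = \sum_{\substack{w \in W_n / \text{Stab}(\epsilon) \\ \lm+w\epsilon \in X^{+}(n)}} \chi(\lm + w\epsilon),$$
each summand with coefficient one. I would derive this directly from the Weyl character formula: write $\chi(\lm)\chi(\epsilon)$ as an alternating sum over $W_n \times W_n\epsilon$, observe that those $\rho$-shifted weights $\lm + \rho + w\epsilon$ which are singular under $W_n$ contribute zero by antisymmetry, and show that the remaining contributions lie in distinct open chambers and so assemble into the stated positive sum. I expect this no-cancellation verification --- which relies precisely on the extremality of a minuscule weight in its Weyl orbit --- to be the main subtlety; everything else is a formal assembly. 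Combining the two facts produces the asserted good filtration with each $\nabla_n(\lm + w\epsilon)$ appearing exactly once, and the Hom statement follows immediately, since $\dim \Hom_{\GL(n)}(\Delta_n(\mu), \nabla_n(\lm)\otimes L_n(\epsilon))$ equals the multiplicity of $\nabla_n(\mu)$ in the filtration, giving $\F$ if $\mu \in \lm + W_n\epsilon$ (automatically dominant by the hypothesis $\mu \in X^{+}(n)$) and $0$ otherwise.
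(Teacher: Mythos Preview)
The paper does not give its own proof of this lemma; it is quoted verbatim as Lemma 4.8 of \cite{BRUNDAN2000} and used as a black box. So there is nothing in the present paper to compare your argument against.

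That said, your sketch is the standard route and is correct. The key points are exactly the ones you isolate: minuscule forces $\Delta_n(\epsilon)=L_n(\epsilon)=\nabla_n(\epsilon)$ with weights $W_n\epsilon$ each of multiplicity one; the Mathieu--Donkin tensor product theorem (II.4.21 in \cite{JAN2003}) then gives a good filtration of $\nabla_n(\lm)\otimes L_n(\epsilon)$; and the multiplicities are read off from characters via $(M:\nabla_n(\nu))=\dim\Hom_{\GL(n)}(\Delta_n(\nu),M)$. Your ``no-cancellation'' step is the Brauer-type computation: for $\nu\in W_n\epsilon$ with $\lm+\nu\notin X^+(n)$ there is a simple root $\alpha$ with $\langle\lm+\nu,\alpha^\vee\rangle<0$; dominance of $\lm$ and minusculeness of $\epsilon$ force $\langle\lm,\alpha^\vee\rangle=0$ and $\langle\nu,\alpha^\vee\rangle=-1$, so $\langle\lm+\nu+\rho,\alpha^\vee\rangle=0$ and the corresponding term in the Weyl character formula vanishes. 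Hence only the dominant $\lm+w\epsilon$ survive, each with coefficient one, and the Hom statement follows.
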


Recall that $V_n$ is the simple module with highest weight $(1,0,\dots,0)$ and $V_n^*$ is its dual which has highest weight $(0,\dots, 0, -1)$. The highest weight of $\bigwedge\nolimits^2V_n^*$ is the minuscule weight $(0,\dots,0,-1,-1)$. If $\lm \in X^{+}(n)$ then $1 \leq i \leq n$ is called $\lm$-removable if $\lm-\epsilon_i \in X^{+}(n)$. 

\begin{prop}\label{rem1}
For $\lm, \mu \in X^{+}(n)$ the space $ \Hom_{\GL(n)} (\Delta_n(\mu), \nabla_n(\lm) \otimes \bigwedge\nolimits^2 V_n^*)$ is zero unless $\mu = \lm - \epsilon_i -\epsilon_j$ for some $1 \leq i <j \leq n$ such that $j$ is $\lm$-removable and $i$ is $(\lm-\epsilon_j)$-removable. In this case $\dim \Hom_{\GL(n)} (\Delta_n(\mu), \nabla_n(\lm) \otimes \bigwedge\nolimits^2 V_n^*) = 1$.
\end{prop}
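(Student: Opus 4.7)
The plan is to apply Lemma \ref{minweigt} directly to $\bigwedge\nolimits^2 V_n^*$ and then unpack the resulting Weyl-orbit condition into the stated removability statement.

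First I would recall from Section 2.1 that $\bigwedge\nolimits^2 V_n^* \cong L_n(\epsilon)$, where $\epsilon = -\epsilon_{n-1} - \epsilon_n$, and verify that $\epsilon$ is minuscule in the sense of the definition given just before Lemma \ref{minweigt}. From Lemma \ref{Eab} the weights of $\bigwedge\nolimits^2 V_n^*$ are precisely the $\binom{n}{2}$ tuples $-\epsilon_a - \epsilon_b$ for $1 \leq a < b \leq n$, which form a single $W_n$-orbit. Any dominant $\lm < \epsilon$ in the dominance order would have $|\lm| = -2$, and writing $\epsilon - \lm = \sum c_i \alpha_i$ with $c_i \in \Z_{\geq 0}$ and solving for the $c_i$ in terms of partial sums of $\lm$, a short case analysis on the weakly decreasing tuple $\lm$ forces $\lm = \epsilon$. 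Hence no dominant weight lies strictly below $\epsilon$, so $\epsilon$ is minuscule.

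Next I would invoke Lemma \ref{minweigt} to conclude that $\dim \Hom_{\GL(n)}(\Delta_n(\mu), \nabla_n(\lm) \otimes \bigwedge\nolimits^2 V_n^*)$ equals $1$ if $\mu \in (\lm + W_n \epsilon) \cap X^+(n)$ and $0$ otherwise. Since $W_n \cong S_n$ acts by permuting coordinates on $X(n) = \Z^n$, the orbit $W_n \epsilon$ is exactly $\{-\epsilon_i - \epsilon_j : 1 \leq i < j \leq n\}$, and the map $(i,j) \mapsto -\epsilon_i - \epsilon_j$ is injective. Therefore $\mu \in \lm + W_n \epsilon$ is equivalent to $\mu = \lm - \epsilon_i - \epsilon_j$ for a unique pair $1 \leq i < j \leq n$.

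Finally I would translate the dominance condition $\mu \in X^+(n)$ into the required removability conditions. Writing $\mu = (\lm - \epsilon_j) - \epsilon_i$, the only non-automatic comparisons between adjacent entries of $\mu$ occur at positions $j, j+1$ and $i, i+1$. The inequality $\mu_j \geq \mu_{j+1}$ amounts to $\lm_j > \lm_{j+1}$ (or is vacuous if $j = n$), i.e.\ the condition that $j$ is $\lm$-removable. The inequality $\mu_i \geq \mu_{i+1}$ splits into the case $i+1 < j$, requiring $\lm_i > \lm_{i+1}$, and the case $i+1 = j$, which is automatic since $\lm_i \geq \lm_j$; together these say exactly that $i$ is $(\lm - \epsilon_j)$-removable. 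Combining the three steps yields the proposition. I expect the minusculity check to be the main (if mild) technical point, since the rest of the argument is routine combinatorial bookkeeping.
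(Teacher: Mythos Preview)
Your proposal is correct and follows essentially the same approach as the paper: apply Lemma~\ref{minweigt} with $\epsilon = -\epsilon_{n-1}-\epsilon_n$, identify the $W_n$-orbit as $\{-\epsilon_i-\epsilon_j : 1\le i<j\le n\}$, and translate the dominance condition on $\lm-\epsilon_i-\epsilon_j$ into the removability statement. The only difference is that you spell out a verification that $\epsilon$ is minuscule, whereas the paper simply asserts this fact before stating the proposition.
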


\begin{proof}
The orbit of  the weight $(0,\dots, 0, -1,-1)$ under $W_n$ is $\{ -\epsilon_i - \epsilon_j | 1 \leq i < j \leq n \}$. Therefore Lemma \ref{minweigt} implies that $ \Hom_{\GL(n)} (\Delta_n(\mu), \nabla_n(\lm) \otimes \bigwedge\nolimits^2 V_n^*)$ is zero unless $\mu = \lm - \epsilon_i -\epsilon_j$ for some $1 \leq i <j \leq n$. The result now follows from the fact that $\lm - \epsilon_i -\epsilon_j \in X^{+}(n)$ if and only if $j$ is $\lm$-removable and $i$ is $\lm-\epsilon_j$-removable.

\end{proof}

Let us make the following definition. 

\begin{df}\label{pair}
We say that a pair $(i,j)$ with $1 \leq i < j \leq n$ is $\lm$-removable if $j$ is $\lm$-removable and $i$ is $\lm-\epsilon_j$-removable.
\end{df}

If $v$ is a primitive vector of weight $\nu$ in $L_n(\lm) \otimes \bigwedge\nolimits^2V_n^*$ then by Lemma \ref{prim}  the submodule of $L_n(\lm) \otimes \bigwedge\nolimits^2V_n^*$ generated by $v$ is a homomorphic image of $\Delta_n(\nu)$ and hence $\dim \Hom_{\GL(n)}(\Delta_n(\nu), L_n(\lm) \otimes \bigwedge\nolimits^2V_n^*) \geq 1$. This implies that $\dim \Hom_{\GL(n)}(\Delta_n(\nu), \nabla_n(\lm) \otimes \bigwedge\nolimits^2V_n^*) \geq 1$. Applying Proposition \ref{rem1} we can deduce the following lemma.

\begin{lem}\label{primtensor}  If $v$ is a primitive vector in $L_n(\lm) \otimes \bigwedge\nolimits^2V_n^*$ of weight $\nu$ then $\nu = \lm - \epsilon_i - \epsilon_j$ for $(i,j)$ a $\lm$-removable pair. \end{lem}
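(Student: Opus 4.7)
The plan is essentially the one sketched in the paragraph immediately preceding the statement. I would break the argument into three short steps, each invoking a result already available in the section.

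First, I would use Lemma \ref{prim}: since $v$ is a primitive vector of weight $\nu$, the cyclic submodule $\langle v \rangle \subseteq L_n(\lm) \otimes \bigwedge\nolimits^2 V_n^*$ is generated by a primitive vector of weight $\nu$, hence is a nonzero homomorphic image of $\Delta_n(\nu)$. The corresponding surjection gives a nonzero element of $\Hom_{\GL(n)}(\Delta_n(\nu), L_n(\lm) \otimes \bigwedge\nolimits^2 V_n^*)$, so this Hom space has positive dimension.

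Next, I would lift this to $\nabla_n(\lm) \otimes \bigwedge\nolimits^2 V_n^*$. Since $L_n(\lm)$ is the socle of $\nabla_n(\lm)$, the inclusion $L_n(\lm) \hookrightarrow \nabla_n(\lm)$ remains an injection after tensoring with the finite-dimensional module $\bigwedge\nolimits^2 V_n^*$ (tensoring with a finite-dimensional space is exact). Composing the map from the previous step with this injection produces a nonzero homomorphism $\Delta_n(\nu) \to \nabla_n(\lm) \otimes \bigwedge\nolimits^2 V_n^*$, so $\dim \Hom_{\GL(n)}(\Delta_n(\nu), \nabla_n(\lm) \otimes \bigwedge\nolimits^2 V_n^*) \geq 1$.

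Finally, I would invoke Proposition \ref{rem1}, which asserts that the latter Hom space vanishes unless $\nu = \lm - \epsilon_i - \epsilon_j$ for some $\lm$-removable pair $(i,j)$ in the sense of Definition \ref{pair}. Combined with the nonvanishing just established, this forces $\nu$ to have the desired form. There is no substantive obstacle here; the work has been done in proving Proposition \ref{rem1} (via the minuscule weight filtration of Lemma \ref{minweigt}), and the only subtlety is the passage from $L_n(\lm)$ to $\nabla_n(\lm)$, which is handled by the socle inclusion and exactness of tensoring with a finite-dimensional module.
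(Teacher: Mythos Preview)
Your proposal is correct and follows essentially the same argument as the paper: the paper's proof is precisely the paragraph preceding the lemma, which uses Lemma \ref{prim} to get a nonzero map from $\Delta_n(\nu)$ into $L_n(\lm)\otimes\bigwedge^2 V_n^*$, passes to $\nabla_n(\lm)\otimes\bigwedge^2 V_n^*$, and then applies Proposition \ref{rem1}. Your write-up is actually slightly more explicit than the paper's about the second step (socle inclusion and exactness of tensoring), but the approach is identical.
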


\subsection{A key isomorphism} 

Let us assume that $\lm \in X^{+}(n)$, $\lm_{n} = \lm_{n-1} = 0$ and $|\lm| = \sum_{i=1}^{n} \lm_n = d$.  Furthermore we will assume $M$ is a submodule of $\nabla_n(\lm)$. Recall that this implies that the weights for $M$ are of the form $\lm - \sum_{i=1}^{n-1} a_i \alpha_i$ for $a_i \geq 0$ and by Lemma \ref{neg} they have no negative entries. 

Let $I:=\{\alpha_1, \dots, \alpha_{n-3}\} \subset S(n)$ and let $R_I = R(n) \cap \Z I$. Note that $R^{+}(n) \backslash R_I := \{\epsilon_i - \epsilon_n, \epsilon_j - \epsilon_{n-1} | 1 \leq i \leq n-1, 1 \leq j \leq n-2 \}.$ Then $U_{I}^{+} := \langle U_{\alpha} | \alpha \in  R^{+}(n) \backslash R_I \rangle$ is the unipotent radical of the standard parabolic subgroup $P_{I}^{+}$ with Levi subgroup $L_I := \GL(n-2) \times \GL(1) \times \GL(1)$. Let $T_2$ denote the torus $\GL(1) \times \GL(1)$. Since $\GL(n-2) \times T_2$ normalises $U_{I}^{+}$ the fixed point space $M^{U_{I}^{+}}$ has the structure of a $(\GL(n-2) \times T_2)$-module and hence also a $\GL(n-2)$-module (see [II.1.8, \cite{JAN2003}]). 

In this section we will construct a $\GL(n-2)$-homomorphism between a $\GL(n)$-module $M$ and its tensor product with $\bigwedge\nolimits^2V_n^*$. We will use this to prove the existence of an isomorphism between the fixed point space $(M \otimes \bigwedge\nolimits^2 V_n^*)^{U_{I}^{+}} $ and certain weight spaces of $M$. In the following sections we will use this isomorphism to deduce information about the composition factors of the tensor product when $M$ is simple.

We now define a map from the $\GL(n)$-module $M$ to its tensor product with $\bigwedge\nolimits^2 V_{n}^{*}$ using the elements of the hyperalgebra $U(n, \F)$ defined in Section \ref{hypgln}. 

\begin{df}
Let $X_1, \dots, X_n$ denote the standard basis for $V_{n}^{*}$ and define a map $e_{I}: M \rightarrow M \otimes \bigwedge\nolimits^2 V_{n}^{*}$ as follows:
\begin{align*}   m \mapsto \sum_{i < j <n-1} &\frac{1}{2} (E_{jn} E_{in-1} - E_{in}E_{jn-1})m \otimes X_i \wedge X_j - \sum_{i<n-1} E_{in} m \otimes X_i \wedge X_{n-1} \\ &+ \sum_{i< n-1} E_{in-1}m \otimes X_i \wedge X_n + m \otimes X_{n-1} \wedge X_n. \end{align*}
\end{df}

Before we prove the next proposition let us recall the following lemma which is a special case of Lemma I.7.14-16 in \cite{JAN2003}.

\begin{lem}[Lemma I.7.14-16 in \cite{JAN2003}] \label{prophyp} Let $M$ be a $\GL(n)$-module and let $U_{I}^{+}$ denote the unipotent radical defined above. 
\begin{enumerate} 
\setlength\itemsep{0em}
\item A subspace $M'$ of $M$ is a $\GL(n)$-submodule of $M$ if and only if it is a $U(n, \F)$-submodule.
\item Suppose $M'$ is a $\GL(n)$-module. Then a map $f: M \rightarrow M'$ is a $\GL(n)$-homomorphism if and only if it is a $U(n, \F)$-homomorphism. 
\item A vector $v \in M^{U_{I}^{+}}$ if and only if $E_{in}^{(r)} v =0$ and $E_{jn-1}^{(r)} v =0$ for $1 \leq i \leq n-1, 1 \leq j \leq n-2$ and $r>0$.
\end{enumerate}
\end{lem}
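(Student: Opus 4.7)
The lemma is stated as a direct consequence of Lemmas I.7.14, I.7.15 and I.7.16 in Jantzen, so the plan is one of bookkeeping: verify that the hypotheses of each of these three results hold in our setting and read off the three conclusions. I would begin by identifying $U(n, \F)$ with Jantzen's algebra of distributions $\mathrm{Dist}(\GL(n))$, which is exactly the content of the construction $U(n,\F) = U(n,\Z) \otimes_\Z \F$ recalled in Section \ref{hypgln}.

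For parts (1) and (2) I would then invoke Jantzen I.7.14 and I.7.15 directly. These say that for a connected algebraic group $G$ over an algebraically closed field, a subspace of a rational $G$-module is a $G$-submodule if and only if it is stable under $\mathrm{Dist}(G)$, and a linear map between two rational $G$-modules is $G$-equivariant if and only if it is $\mathrm{Dist}(G)$-equivariant. Since $\GL(n)$ is connected, both assertions follow at once. Connectedness is essential here: without it the hyperalgebra only detects the identity component, and the equivalences break.

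For part (3) the work is to identify the distribution algebra of $U_I^+$ and apply Jantzen I.7.16, which says that for a closed connected subgroup $H$ of $G$ and a rational $G$-module $M$, a vector $v \in M$ is fixed by $H$ if and only if it is annihilated by every element of the augmentation ideal $\mathrm{Dist}^+(H)$. In our setting $U_I^+$ is the closed connected unipotent subgroup generated by the root subgroups $U_\alpha$ for $\alpha \in R^+(n) \setminus R_I = \{\epsilon_i-\epsilon_n : 1 \leq i \leq n-1\} \cup \{\epsilon_j-\epsilon_{n-1} : 1 \leq j \leq n-2\}$. By the standard PBW-type description of connected unipotent subgroups of a Chevalley group (see Jantzen II.1.12), $\mathrm{Dist}(U_I^+)$ is generated as a unital algebra by the divided powers $E_\alpha^{(r)}$ for $\alpha$ in this set and $r \geq 0$, which in the paper's notation are exactly the $E_{in}^{(r)}$ and $E_{jn-1}^{(r)}$ in the stated ranges. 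Hence the condition $E_{in}^{(r)}v = E_{jn-1}^{(r)}v = 0$ for all $r > 0$ and all stated $i,j$ is equivalent to $v$ being killed by $\mathrm{Dist}^+(U_I^+)$, giving (3). The one non-trivial point is this last identification of generators for $\mathrm{Dist}(U_I^+)$, but it is standard once one knows that the distribution algebra of $U_\alpha \cong G_a$ is the divided power algebra $\F[E_\alpha^{(r)} : r \geq 0]$, and that products of root subgroups in any fixed order give a dense open subset of $U_I^+$.
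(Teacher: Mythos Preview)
Your proposal is correct and matches the paper's approach: the paper does not give its own proof but simply cites Jantzen I.7.14--16, treating the lemma as a direct specialisation of those results. Your expansion of how each of Jantzen's three lemmas applies (connectedness of $\GL(n)$ for parts (1)--(2), and the identification of $\mathrm{Dist}^+(U_I^+)$ via the divided powers $E_{in}^{(r)}$, $E_{jn-1}^{(r)}$ for part (3)) is exactly the bookkeeping one would do to justify the citation, and nothing further is needed.
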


Before presenting the next proposition let us note that since the action of $\GL(n)$ on tensor products is diagonal it follows  from [I.7.11(1-2), \cite{JAN2003}] that for $\GL(n)$-modules $M, M'$ and $m \otimes m' \in M \otimes M'$ we have $$E_{ab}^{(k)}(m \otimes m') = \sum_{r=0}^k E_{ab}^{(k)} m \otimes E_{ab}^{(k-r)} m'$$ and $$F_{ab}^{(k)}(m \otimes m') = \sum_{r=0}^k F_{ab}^{(k)} m \otimes F_{ab}^{(k-r)} m'.$$ 

\begin{prop}\label{hom}
The map $e_{I}$ is an injective $\GL(n-2)$-homomorphism such that $(M \otimes \bigwedge\nolimits^2 V_n^*)^{U_{I}^{+}} \subset e_{I}(M).$ 
\end{prop}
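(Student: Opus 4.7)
The plan is to establish the three assertions separately: (i) $e_I$ is a $\GL(n-2)$-equivariant linear map; (ii) $e_I$ is injective; and (iii) $(M \otimes \bigwedge\nolimits^2 V_n^*)^{U_{I}^{+}} \subseteq e_I(M)$. For (i), by Lemma \ref{prophyp}(2) it suffices to verify that $e_I$ commutes with the action of every element of the hyperalgebra $U(n-2,\F) \subset U(n,\F)$, which reduces to checking the generators $E_{ab}^{(r)}$, $F_{ab}^{(r)}$ and $H_a^{(r)}$ with $1 \leq a < b \leq n-2$ and $r \geq 1$. Since $e_I$ is visibly weight-preserving for $T(n-2)$, the $H$-equivariance is automatic.

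The main obstacle is the $E$- and $F$-equivariance, which I would verify by computing $F_{ab}^{(r)} \cdot e_I(m)$ and $e_I(F_{ab}^{(r)} m)$ summand by summand, using the coproduct formula from [I.7.11, \cite{JAN2003}] for the action on tensor products and Lemma \ref{Eab} for the action on $\bigwedge\nolimits^2 V_n^*$. On the wedge factor $F_{ab}$ acts non-trivially only when $b \in \{i,j\}$, which, since $b \leq n-2$, can occur across each of the first three summands of $e_I(m)$; on the $M$-factor of each summand one pushes $F_{ab}^{(r)}$ past the various $E_{\bullet,n}$ and $E_{\bullet,n-1}$ using the commutator identities of Lemma \ref{relhyp}. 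The correction terms produced in this way should cancel precisely against those coming from $F_{ab}$ hitting the wedge factor, and this cancellation is what forces the coefficient $\tfrac{1}{2}$, the various signs, and the presence of the last summand $m \otimes X_{n-1} \wedge X_n$ in the definition of $e_I$. The computation for $E_{ab}^{(r)}$ is symmetric. This is pure bookkeeping, but a lengthy one, requiring one to split into subcases according to how the indices $a,b$ collide with the summation indices $i,j$.

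Injectivity is immediate: the coordinate projection $M \otimes \bigwedge\nolimits^2 V_n^* \to M$ corresponding to the basis vector $X_{n-1} \wedge X_n$ sends $e_I(m) \mapsto m$, so $\ker e_I = 0$. For the fixed-point inclusion, let $v = \sum_{a<b} m_{ab} \otimes X_a \wedge X_b \in (M \otimes \bigwedge\nolimits^2 V_n^*)^{U_{I}^{+}}$ and set $m := m_{n-1,n}$; I claim $v = e_I(m)$. By Lemma \ref{prophyp}(3) we have $E_{cn} v = 0$ for $1 \leq c \leq n-1$ and $E_{c,n-1} v = 0$ for $1 \leq c \leq n-2$. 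Applying $E_{c,n-1}$ and extracting the coefficient of $X_{n-1} \wedge X_n$ via Lemma \ref{Eab} gives $m_{c,n} = E_{c,n-1} m$, matching the third summand of $e_I(m)$; applying $E_{cn}$ with $c < n-1$ and extracting the same coefficient gives $m_{c,n-1} = -E_{cn} m$, matching the second summand. For $i < j < n-1$, applying $E_{jn}$ and extracting the coefficient of $X_i \wedge X_n$ gives $m_{ij} = E_{jn} E_{i,n-1} m$, while applying $E_{j,n-1}$ and extracting the coefficient of $X_i \wedge X_{n-1}$ gives $m_{ij} = -E_{j,n-1} E_{in} m$. Since Lemma \ref{relhyp} forces $E_{in}$ and $E_{j,n-1}$ to commute (their first indices $i,j$ and second indices $n,n-1$ are pairwise distinct), averaging these two equal expressions produces exactly the coefficient of $X_i \wedge X_j$ in the first summand of $e_I(m)$, completing the proof. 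The higher-power conditions $E_{cn}^{(r)} v = 0$ for $r \geq 2$ impose no new constraints given the above (one checks directly that they are automatically satisfied), so the analysis with $r=1$ suffices.
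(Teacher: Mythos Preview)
Your proposal is correct and follows essentially the same approach as the paper's proof: injectivity via the $X_{n-1}\wedge X_n$-coordinate, $\GL(n-2)$-equivariance by pushing $E_{ab}^{(r)}$ and $F_{ab}^{(r)}$ through the definition using Lemma~\ref{relhyp} and Lemma~\ref{Eab}, and the fixed-point inclusion by extracting coefficients from the vanishing of $E_{c,n}v$ and $E_{c,n-1}v$. The only cosmetic difference is in the last step: to recover $m_{ij}$ for $i<j<n-1$ the paper extracts two expressions using $E_{jn}v=0$ and $E_{in}v=0$ (both with second index $n$), whereas you use $E_{jn}v=0$ and $E_{j,n-1}v=0$ together with the commutativity $[E_{in},E_{j,n-1}]=0$; both routes yield the same averaged formula $m_{ij}=\tfrac{1}{2}(E_{jn}E_{i,n-1}-E_{in}E_{j,n-1})m$.
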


 \begin{proof}
The set  $\{ X_i \wedge X_j | 1 \leq i < j \leq n\}$ forms a basis of $\bigwedge\nolimits^2 V_{n}^{*}$ and therefore $e_I (m)=0$ implies $m \otimes X_{n-1} \wedge X_n =0$ and hence $m=0$. Hence the map $e_I$ is injective. 

To see that $e_{I}$ is a $\GL(n-2)$-homomorphism we recall it is enough to show it is a $U(n-2, \F)$-homomorphism by Lemma \ref{prophyp}. Note that for any weight vector $m \in M$ the weights of $m$ and $e_{I}(m)$ relative to $T(n-2)$ are equal. So it suffices to show that $e_{I}(Xm) = Xe_{I}(m)$ for any $m \in M$ and $X \in U(n-2, \F)$ of the form $E_{ij}^{(k)}$ or  $F_{ij}^{(k)}$ with $1 \leq i <j <n-1$ and $k>0$. Using the divided power structure, the commutator relations detailed in Lemma \ref{relhyp} and the action of $E_{ij}^{(k)}$ and $F_{ij}^{(k)}$ on the basis $\{ X_i \wedge X_j | 1 \leq i < j \leq n\}$ detailed in Remark \ref{Eab} we deduce the following. For $1 \leq a < b \leq n-2$ and $k>0$ we have, 

\begin{align*}
E_{ab}^{(k)} e_{I}(m) &= \sum_{i<j<n-1} \frac{1}{2} (E_{ab}^{(k)}E_{jn}E_{in-1} - E_{ab}^{(k)}E_{in}E_{jn-1})m \otimes X_i \wedge X_j \\ 
& \hspace{0.4cm} + \sum_{i <n-1} \frac{1}{2} (E_{ab}^{(k-1)} E_{an}E_{in-1} - E_{ab}^{(k-1)} E_{in}E_{an-1})m \otimes X_b \wedge X_i \\ 
& \hspace{0.4cm} - \sum_{i<n-1} E_{ab}^{(k)}E_{in} m \otimes X_i \wedge X_{n-1} + E_{ab}^{(k-1)}E_{an}m \otimes X_b \wedge X_{n-1} \\   
& \hspace{0.4cm} + \sum_{i<n-1} E_{ab}^{(k)}E_{in-1} m \otimes X_i \wedge X_{n} -E_{ab}^{(k-1)}E_{an-1}m \otimes X_b \wedge X_{n} \\
& \hspace{0.4cm} + E_{ab}^{(k)}m \otimes X_{n-1} \wedge X_n \\ 
& = \sum_{i<j<n-1} \frac{1}{2} (E_{jn}E_{ab}^{(k)}E_{in-1} - E_{in}E_{ab}^{(k)}E_{jn-1})m \otimes X_i \wedge X_j \\ 
&\hspace{0.4cm} - \sum_{i <n-1} \frac{1}{2}  E_{ab}^{(k-1)} E_{in}E_{an-1} m \otimes X_b \wedge X_i  - \sum_{i<n-1} E_{in} E_{ab}^{(k)}m \otimes X_i \wedge X_{n-1}\\ 
& \hspace{0.4cm} + \sum_{i<n-1} E_{in-1} E_{ab}^{(k)}m \otimes X_i \wedge X_{n} + E_{ab}^{(k)}m \otimes X_{n-1} \wedge X_n \\
&=\sum_{i<j<n-1} \frac{1}{2} (E_{jn}E_{in-1} E_{ab}^{(k)}- E_{in}E_{jn-1}E_{ab}^{(k)})m \otimes X_i \wedge X_j \\
& \hspace{0.4cm} - \sum_{i<n-1} E_{in} E_{ab}^{(k)}m \otimes X_i \wedge X_{n-1}\\ 
& \hspace{0.4cm} + \sum_{i<n-1} E_{in-1} E_{ab}^{(k)}m \otimes X_i \wedge X_{n} + E_{ab}^{(k)}m \otimes X_{n-1} \wedge X_n \\
&=e_{I}(E_{ab}^{(k)} m)
\end{align*}
and
\begin{align*}
F_{ab}^{(k)} e_{I}(m) &= \sum_{i<j<n-1} \frac{1}{2} (F_{ab}^{(k)}E_{jn}E_{in-1} - F_{ab}^{(k)}E_{in}E_{jn-1})m \otimes X_i \wedge X_j \\ 
& \hspace{0.4cm} + \sum_{i <n-1} \frac{1}{2} (F_{ab}^{(k-1)} E_{bn}E_{in-1} - F_{ab}^{(k-1)} E_{in}E_{bn-1})m \otimes X_a \wedge X_i \\ 
& \hspace{0.4cm} - \sum_{i<n-1} F_{ab}^{(k)}E_{in} m \otimes X_i \wedge X_{n-1} + F_{ab}^{(k-1)}E_{bn}m \otimes X_a \wedge X_{n-1} \\   
& \hspace{0.4cm} + \sum_{i<n-1} F_{ab}^{(k)}E_{in-1} m \otimes X_i \wedge X_{n} -F_{ab}^{(k-1)}E_{bn-1}m \otimes X_a \wedge X_{n} \\
& \hspace{0.4cm}  + F_{ab}^{(k)}m \otimes X_{n-1} \wedge X_n \\ 
& = \sum_{i<j<n-1} \frac{1}{2} (E_{jn}F_{ab}^{(k)}E_{in-1} - E_{in}F_{ab}^{(k)}E_{jn-1})m \otimes X_i \wedge X_j \\ 
&\hspace{0.4cm} - \sum_{i <n-1} \frac{1}{2}  F_{ab}^{(k-1)} E_{in}E_{bn-1} m \otimes X_a \wedge X_i  - \sum_{i<n-1} E_{in} F_{ab}^{(k)}m \otimes X_i \wedge X_{n-1}\\ 
& \hspace{0.4cm} + \sum_{i<n-1} E_{in-1} F_{ab}^{(k)}m \otimes X_i \wedge X_{n} + F_{ab}^{(k)}m \otimes X_{n-1} \wedge X_n \\
&=\sum_{i<j<n-1} \frac{1}{2} (E_{jn}E_{in-1} F_{ab}^{(k)}- E_{in}E_{jn-1}F_{ab}^{(k)})m \otimes X_i \wedge X_j \\
& \hspace{0.4cm} - \sum_{i<n-1} E_{in} F_{ab}^{(k)}m \otimes X_i \wedge X_{n-1}\\ 
& \hspace{0.4cm} + \sum_{i<n-1} E_{in-1} F_{ab}^{(k)}m \otimes X_i \wedge X_{n} + F_{ab}^{(k)}m \otimes X_{n-1} \wedge X_n \\
&=e_{I}(F_{ab}^{(k)} m).
\end{align*}

It remains to show that $(M \otimes \bigwedge\nolimits^2 V_n^*)^{U_{I}^{+}}$ is contained in the image $e_{I}(M)$. For $x \in M \otimes \bigwedge\nolimits^2 V_n^*$ we can write $x = \sum_{i<j\leq n} m_{ij} \otimes X_i \wedge X_j$ for some $m_{ij} \in M$. Suppose $ x \in (M \otimes \bigwedge\nolimits^2 V_n^*)^{U_{I}^{+}}$.  By Lemma \ref{prophyp} this implies $E_{an} x= 0$ and $E_{bn-1} x =0$ for $1 \leq a \leq n-1$ and $1 \leq b \leq n-2$. Therefore we have, 
\begin{align*} 
0 &= E_{an} (\sum_{i<j} m_{ij} \otimes X_i \wedge X_j) \\ 
&= \sum_{i<j} E_{an} m_{ij} \otimes X_j \wedge X_j + \sum_{i<j} m_{ij} \otimes E_{an}(X_{i} \wedge X_j) \\
&= \sum_{i<j} E_{an} m_{ij} \otimes X_i \wedge X_j - \sum_{i<a} m_{ia} \otimes X_i \wedge X_n + \sum_{i>a} m_{ai} \otimes X_i \wedge X_n.
\end{align*}
Similarly, 
\begin{align*} 
0 &= E_{an-1} (\sum_{i<j} m_{ij} \otimes X_i \wedge X_j) \\ &= \sum_{i<j} E_{an-1} m_{ij} \otimes X_i \wedge X_j - \sum_{i<a} m_{ia} \otimes X_i \wedge X_{n-1} \\ 
&\hspace{0.4cm}+ \sum_{n-1>i>a} m_{ai} \otimes X_i \wedge X_{n-1} - m_{an} \otimes X_{n-1} \wedge X_n.
\end{align*}

Since the basis vectors  $X_i \wedge X_j$ are linearly independent this implies that $m_{an-1} = - E_{an}m_{n-1n},$  $ m_{an} = E_{an-1}m_{n-1n}$ and  $$E_{bn}(m_{an}) = \begin{cases} m_{ab} \hspace{1cm} &\text{if} \hspace{0.4cm} a<b \\ -m_{ba} \hspace{1cm} &\text{if} \hspace{0.4cm} b<a \end{cases}. $$ Therefore for any  $1 \leq a<b <n-1$ we have $m_{ab} = E_{bn}E_{an-1} m_{n-1n} = - E_{an}E_{bn-1} m_{n-1n}$. Hence $m_{ab} = \frac{1}{2} (E_{bn}E_{an-1} - E_{an}E_{bn-1}) m_{n-1n}$ and therefore $x = e_{I}(m_{n-1n}) \in e_{I}(M)$ and the proof is complete. 
\end{proof}

\begin{rem}\label{En-1n}
For $k>0$ we also have the following identity:
\begin{align*}
E_{n-1n}^{(k)} e_{I}(m) &= \sum_{i<j<n-1} \frac{1}{2} (E_{n-1n}^{(k)}E_{jn}E_{in-1} - E_{n-1 n}^{(k)}E_{in}E_{jn-1})m \otimes X_i \wedge X_j \\
& \hspace{0.4cm} - \sum_{i<n-1} E_{n-1n}^{(k)}E_{in} m \otimes X_i \wedge X_{n-1} + \sum_{i<n-1} E_{n-1n}^{(k-1)}E_{in} m \otimes X_i \wedge X_{n} \\
& \hspace{0.4cm} + \sum_{i<n-1} E_{n-1n}^{(k)}E_{in-1} m \otimes X_i \wedge X_{n} + E_{n-1n}^{(k)}m \otimes X_{n-1} \wedge X_n \\ 
&=\sum_{i<j<n-1} \frac{1}{2} (E_{jn}E_{n-1n}^{(k)}E_{in-1} - E_{in}E_{n-1 n}^{(k)}E_{jn-1})m \otimes X_i \wedge X_j \\
& \hspace{0.4cm} - \sum_{i<n-1} E_{in} E_{n-1n}^{(k)}m \otimes X_i \wedge X_{n-1} + \sum_{i<n-1} E_{in} E_{n-1n}^{(k-1)}m \otimes X_i \wedge X_{n} \\
& \hspace{0.4cm} + \sum_{i<n-1} E_{in-1} E_{n-1n}^{(k)}m \otimes X_i \wedge X_{n}  \\
& \hspace{0.4cm}- \sum_{i<n-1} E_{in}E_{n-1n}^{(k-1)} m \otimes X_i \wedge X_n + E_{n-1n}^{(k)}m \otimes X_{n-1} \wedge X_n \\ 
&= \sum_{i<j<n-1} \frac{1}{2} (E_{jn}E_{in-1} E_{n-1n}^{(k)}- E_{in}E_{jn-1}E_{n-1 n}^{(k)})m \otimes X_i \wedge X_j \\
& \hspace{0.4cm} - \sum_{i<j<n-1} \frac{1}{2} (E_{jn}E_{in}E_{n-1n}^{(k-1)} - E_{in}E_{jn}E_{n-1 n}^{(k-1)})m \otimes X_i \wedge X_j \\
& \hspace{0.4cm} - \sum_{i<n-1} E_{in} E_{n-1n}^{(k)}m \otimes X_i \wedge X_{n-1} + \sum_{i<n-1} E_{in-1} E_{n-1n}^{(k)}m \otimes X_i \wedge X_{n}  \\
& \hspace{0.4cm} + E_{n-1n}^{(k)}m \otimes X_{n-1} \wedge X_n \\ 
&=e_{I}(E_{n-1n}^{(k)} m).
\end{align*}
\end{rem}

The fixed point space in Proposition \ref{hom} will be one side of our isomorphism. Let us know describe the weight space of $M$ that will be the image. 

\begin{df}
The $(i,j)$-level of $M$, denoted $M^{(i,j)}$, is the sum of weight spaces $$M^{(i,j)}: = \bigoplus_{\substack{\mu_{n-1} =\lm_{n-1} + i \\ \mu_{n} = \lm_{n} + j}} M_{\mu}.$$ 
\end{df}

The $\GL(n)$-module $M$ admits an action of $\GL(n-2)$. It is clear that under the restriction to the Levi subgroup $\GL(n-2) \times T_2$, the subspace $M^{(i,j)}$ has the structure of a $\GL(n-2)$-submodule since there are no elements of $\GL(n-2)$ that act in a way that changes the last two parts of the weight corresponding to a vector in $M^{(i,j)}$. Therefore the $(i,j)$-level of $M$ can be naturally thought of as a (possibly zero) $\GL(n-2)$-module.  

For $i,j \geq 0$ define the subset $M^{(i,j) \times}$ of the $(i,j)$-level of $M$ as follows.
\begin{align*}
M^{(i,j) \times} :=\{ v \in M^{(i,j)}  &| E_{n-1n} v =0\}
\end{align*}
\begin{rem}
Let $\alpha_{n-1} = \epsilon_{n-1} - \epsilon_n$ and denote by $U_{\alpha_{n-1}}$ the corresponding root subgroup. Note that $M^{(i,j) \times} = M^{(i,j)} \cap M^{U_{\alpha_{n-1}}}$. We have already shown that $M^{(i,j)}$ is a $\GL(n-2)$-submodule of $M$ and it follows from the relations in Lemma \ref{relhyp} that $M^{U_{\alpha_{n-1}}}$ is a $U(n-2, \F)$-submodule of $M$. Therefore by Lemma \ref{prophyp}, $M^{(i,j) \times}$ is the intersection of $\GL(n-2)$-submodules and therefore a $\GL(n-2)$-module. Furthermore if $j=0$ then $E_{n-1n} M^{(i,j)} = 0$ since otherwise it would be contained in a weight space for $M$ corresponding to a weight with a negative entry. Therefore $M^{(i,0) \times} = M^{(i,0)}$. 
\end{rem}

Let us summarise some useful results regarding the subspace $M^{(i,j) }$ in the following lemma. 

\begin{lem}\label{zero}
Let $\lm \in X^{+}(n)$ such that  $\lm_{n-1} = \lm_n =0$ and let $M$ be a submodule of $\nabla_n(\lm)$. Let $W := M^{(0,0)} \oplus M^{(1,0)} \oplus M^{(0,1)} \oplus M^{(1,1)}$. Then, for $1 \leq a,b \leq n-2$ and $r+k >1$ we have \begin{enumerate} \setlength\itemsep{0em}
\item $E_{an}^{(k)}E_{bn}^{(r)} W =0$ and 
\item $E_{an-1}^{(k)}E_{bn-1}^{(r)}W=0$.
\end{enumerate}
\end{lem}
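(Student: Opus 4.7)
The plan is to argue by weights, invoking Lemma \ref{neg} to force the relevant weight spaces to be zero. Fix $v \in W$ and let $\mu$ be its weight. Since $v$ lies in one of the summands $M^{(0,0)}, M^{(1,0)}, M^{(0,1)}, M^{(1,1)}$ and $\lm_{n-1}=\lm_n=0$, we have $\mu_{n-1} \in \{0,1\}$ and $\mu_n \in \{0,1\}$.

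Next I would use the action of the hyperalgebra on weight spaces recalled in Section \ref{hypgln}: $E_{bn}^{(r)}$ sends $M_\mu$ into $M_{\mu + r(\epsilon_b - \epsilon_n)}$ and $E_{an}^{(k)}$ sends that into $M_{\mu + r(\epsilon_b - \epsilon_n) + k(\epsilon_a - \epsilon_n)}$. Since $1 \leq a,b \leq n-2$, the $n$-th entry of this final weight is $\mu_n - r - k$. By hypothesis $r+k > 1$, i.e.\ $r+k \geq 2$, while $\mu_n \leq 1$, so this entry is at most $-1$. Now $M$ is a submodule of $\nabla_n(\lm)$ with $\lm_n \geq 0$, so Lemma \ref{neg} forces the corresponding weight space of $M$ to be zero. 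Hence $E_{an}^{(k)} E_{bn}^{(r)} v = 0$, which proves (1).

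For (2) the argument is identical after replacing $n$ by $n-1$ throughout: the composite $E_{an-1}^{(k)} E_{bn-1}^{(r)}$ shifts the $(n-1)$-th entry of the weight by $-(r+k) \leq -2$, producing a weight with $(n-1)$-th entry at most $\mu_{n-1} - 2 \leq -1 < 0$, again impossible in $M \subset \nabla_n(\lm)$ by Lemma \ref{neg}.

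There is no real obstacle here; the proof is a one-line weight calculation. The only thing to be careful of is confirming that $W$ has been defined precisely so that both $\mu_{n-1} \leq 1$ and $\mu_n \leq 1$ on it (which is the whole point of adding the fourth summand $M^{(1,1)}$), so that the bound $r+k \geq 2$ matches with $\mu_n \leq 1$ (respectively $\mu_{n-1} \leq 1$) to push the relevant entry strictly negative.
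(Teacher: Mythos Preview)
Your proof is correct and follows the same weight-counting idea as the paper. The only difference is in execution: the paper argues part (1) via failure of subdominance (the coefficient of $\alpha_{n-1}$ in $\lm-\nu$ becomes negative) and splits part (2) into a subdominance case and a residual case handled by Lemma~\ref{neg}, whereas you apply Lemma~\ref{neg} uniformly to both parts, which is slightly cleaner.
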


\begin{proof} 
Recall that if $M$ is a submodule of $\nabla_n(\lm)$ the weights $\mu \in X(n)$ for which $M_{\mu} \neq 0$ are of the form $\mu = \lm - \sum_{i=1}^{n-1} a_i \alpha_i$ where each $a_i \geq 0$. Note that $\mu = \lm - \sum_{i=1}^{n-1} a_i \alpha_i$ is in $M^{(i,j)}$ if and only if $\lm_{n-1} - a_{n-1} + a_{n-2} = \lm_{n-1} + i$ and  $\lm_n + a_{n-1} =\lm_n + j$. 

Let $v \in W$ be a nonzero weight vector of weight $\mu = \lm - \sum_{i=1}^{n-1} a_i \alpha_i$. Note that $0 \leq a_{n-1} \leq 1$ and $0 \leq a_{n-2} \leq 2$. Furthermore if $a_{n-1} = 0$ then $0 \leq a_{n-2} < 2$ implying that if $a_{n-2} = 2$ then $a_{n-1} = 1$. 

To see $1$ note that $E_{an}^{(k)} E_{bn}^{(r)} v \subset M_{\nu}$ where $\nu = \lm - \sum_{i=1}^{n-2} x_i \alpha_i + (r+k - a_{n-1}) \alpha_{n-1}$ for some $x_i \in \Z$. However, $(r+k - a_{n-1}) > 0 $ and hence $\nu$ is not subdominant to $\lm$, so $M_{\nu} = 0$ and $E_{an}^{(k})E_{bn}^{(r)} v =0$. 

To see $2$ now note that $E_{an-1}^{(k)}E_{bn-1}^{(r)} v \subset M_{\nu}$  where $\nu = \lm - \sum_{i=1}^{n-3} x_i \alpha_i + (r+k - a_{n-2}) \alpha_{n-2} - a_{n-1} \alpha_{n-1}$ for some $x_i \in \Z$. If $a_{n-2} < 2$ or $r+k>2$ then $(r+k - a_{n-2}) > 0$ and therefore $E_{an-1}^{(k)}E_{bn-1}^{(r)}  v =0$. Now assume $a_{n-2} = 2$ and $r+k = 2$. As noted above this implies $a_{n-1} = 1$ and $v \in M^{(1,1)}$. By our assumption that  $\lm_{n-1} = \lm_n =0$ we can write $\lm = \sum_{i=1}^{n-2} \lm_i \epsilon_i$. Therefore $\nu =  \sum_{i=1}^{n-2} \lm_i \epsilon_i -  \sum_{i=1}^{n-2} y_i \epsilon_i  - \epsilon_{n-1} + \epsilon_{n}$. Therefore by Lemma \ref{neg} $E_{an-1}^{(k)}E_{bn-1}^{(r)} v =0$. 
\end{proof}

We can now state and the following result. 

\begin{prop}\label{iso}
Let $\lm \in X^{+}(n)$ be such that $\lm_{n-1} = \lm_n =0$ and let $M$ be a submodule of $\nabla_n(\lm)$. Define $W^{\times} := M^{(0,0)} \oplus M^{(1,0)} \oplus M^{(0,1) \times} \oplus M^{(1,1) \times} $. Then, $W^{ \times} \cong (M \otimes \bigwedge\nolimits^2 V_n^*)^{U_{I}^{+}}$ as $\GL(n-2)$-modules.
\end{prop}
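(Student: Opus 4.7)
The plan is to show that the restriction $\phi := e_I|_{W^{\times}}$ of the map from Proposition \ref{hom} is the required $\GL(n-2)$-isomorphism. Since Proposition \ref{hom} already gives $e_I$ as an injective $\GL(n-2)$-homomorphism with $(M \otimes \bigwedge\nolimits^{2} V_n^*)^{U_I^+} \subseteq e_I(M)$, the proof reduces to the two containments $e_I(W^{\times}) \subseteq (M \otimes \bigwedge\nolimits^{2} V_n^*)^{U_I^+}$ and $(M \otimes \bigwedge\nolimits^{2} V_n^*)^{U_I^+} \subseteq e_I(W^{\times})$. For the forward direction, by Lemma \ref{prophyp}(3) it suffices, for each $m \in W^{\times}$, to check that $E_{an}^{(r)} e_I(m) = 0$ for $1 \le a \le n-1,\ r \ge 1$ and $E_{an-1}^{(r)} e_I(m) = 0$ for $1 \le a \le n-2,\ r \ge 1$. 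The case $a = n-1$ is immediate from Remark \ref{En-1n} combined with $E_{n-1,n} m = 0$ on each summand of $W^{\times}$ (automatic by Lemma \ref{neg} when $\mu_n = 0$, imposed by the $\times$ condition when $\mu_n = 1$). For $a \le n-2$ and $r \ge 2$, the $n$-th (resp.\ $(n-1)$-th) coordinate of the weight of the result drops to $\le -2$, below the minimum $-1$ attainable in $M \otimes \bigwedge\nolimits^{2} V_n^*$, so the term vanishes by Lemma \ref{neg}.

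The $r = 1$ case for $a \le n-2$ is the main computation. Expanding $E_{an} e_I(m)$ via the Leibniz rule, Lemma \ref{Eab}, and the commutation relations of Lemma \ref{relhyp}, then collecting coefficients of each basis element and applying Lemma \ref{zero}, one finds after cancellation that the only potentially non-zero contributions are coefficients of $X_k \wedge X_n$ for $k < n-1$, namely $\tfrac{1}{2}(E_{kn}E_{an-1} + E_{an}E_{kn-1})m$ when $k \ne a$ and $E_{an}E_{an-1}m$ when $k = a$. Using the identity $E_{an} = E_{an-1}E_{n-1,n} - E_{n-1,n}E_{an-1}$ (the $r=1$ case of the relation $E_{ij}^{(r)}E_{jk} - E_{jk}E_{ij}^{(r)} = E_{ij}^{(r-1)}E_{ik}$ in Lemma \ref{relhyp} with $i=a,j=n-1,k=n$), each such expression rewrites (using $p > 2$ in the $k = a$ case) as a combination of $E_{an-1}E_{kn-1}E_{n-1,n}m$ (or $E_{an-1}^2 E_{n-1,n}m$) and $E_{n-1,n}E_{an-1}E_{kn-1}m$ (or $E_{n-1,n}E_{an-1}^2 m$); all such terms vanish on $W^{\times}$ because $E_{n-1,n}m = 0$ and $E_{an-1}E_{kn-1}m = 0$ (resp.\ $E_{an-1}^2 m = 0$) by Lemma \ref{zero}(2). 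The verification of $E_{an-1} e_I(m) = 0$ proceeds analogously, using the same input from Lemma \ref{zero}(2).

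For the reverse containment, we use a $T_2$-weight analysis coming from the Levi $L_I = \GL(n-2) \times T_2$. Exactness of $- \otimes \bigwedge\nolimits^{2} V_n^*$ and left-exactness of $(-)^{U_I^+}$ yield an $L_I$-equivariant embedding $(M \otimes \bigwedge\nolimits^{2} V_n^*)^{U_I^+} \hookrightarrow (\nabla_n(\lm) \otimes \bigwedge\nolimits^{2} V_n^*)^{U_I^+}$. By Lemma \ref{minweigt}, the latter admits a filtration by $\nabla_n(\lm - \epsilon_i - \epsilon_j)^{U_I^+}$ over $\lm$-removable pairs $(i,j)$; the hypothesis $\lm_{n-1} = \lm_n$ forces $n-1$ to be non-removable, so $j \in \{n\} \cup \{1, \ldots, n-2\}$ and $(\nu_{n-1}, \nu_n) \in \{(-1,-1), (0,-1), (0,0)\}$ for every such $\nu = \lm - \epsilon_i - \epsilon_j$. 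Each $\nabla_n(\nu)^{U_I^+}$ is the parabolically induced $L_I$-module $\nabla_{n-2}(\overline{\overline{\nu}}) \boxtimes (\nu_{n-1}, \nu_n)$, hence pure of $T_2$-weight $(\nu_{n-1}, \nu_n)$, so the ambient fixed set has $T_2$-weights only in $\{(-1,-1), (0,-1), (0,0)\}$. Now given $v \in (M \otimes \bigwedge\nolimits^{2} V_n^*)^{U_I^+}$, write $v = e_I(m)$ uniquely via Proposition \ref{hom} and decompose $m = \sum_{\mu} m_\mu$ by $T(n)$-weight; since $e_I$ shifts $T_2$-weight by $(-1,-1)$, only those $m_\mu$ with $(\mu_{n-1}, \mu_n) \in \{(0,0), (1,0), (1,1)\}$ can survive, placing each in $M^{(0,0)}$, $M^{(1,0)}$, or $M^{(1,1)}$. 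Applying Remark \ref{En-1n} to $E_{n-1,n} e_I(m) = 0$ and using the $T(n)$-grading gives $E_{n-1,n} m_\mu = 0$ for every $\mu$: automatic in the first two cases, and precisely the $\times$ condition in the third. Hence $m \in W^{\times}$ and $v \in e_I(W^{\times})$. The principal obstacle is the $r=1$ step in the forward containment, where the many Leibniz-rule terms must be reduced—via the commutator identity together with $E_{n-1,n}m = 0$ and Lemma \ref{zero}—to a common form that vanishes simultaneously.
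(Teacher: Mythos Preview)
Your proof is correct. The forward containment $e_I(W^\times)\subseteq (M\otimes\bigwedge\nolimits^2V_n^*)^{U_I^+}$ follows the paper's approach almost exactly: the same Leibniz-rule expansion, the same use of Lemma~\ref{zero} and of the identity $E_{an}=E_{a,n-1}E_{n-1,n}-E_{n-1,n}E_{a,n-1}$, and the same reduction to $E_{n-1,n}m=0$. (Your weight-bound argument for $r\ge 2$ is a slight variant of the paper's direct appeal to Lemma~\ref{zero}, but equivalent.)

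The reverse containment is where you genuinely diverge from the paper. The paper argues elementwise: given $v=e_I(m)$ in the fixed space, it first shows $E_{n-1,n}m=0$ via Remark~\ref{En-1n}, then reduces to the case where $m$ is $\GL(n-2)$-primitive (by applying raising operators $E_{ab}^{(k)}$ with $b\le n-2$, which do not change the level), observes that then $v=e_I(m)$ is $\GL(n)$-primitive, and invokes Lemma~\ref{primtensor} to pin down the weight of $v$ and hence the level of $m$. You instead work globally with the $T_2$-grading: the good filtration of $\nabla_n(\lambda)\otimes\bigwedge\nolimits^2V_n^*$ from Lemma~\ref{minweigt}, together with the fact that each $\nabla_n(\nu)^{U_I^+}$ is concentrated in the single $T_2$-weight $(\nu_{n-1},\nu_n)$, bounds the $T_2$-weights that can occur in the fixed space and hence the levels of $m$. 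This is a clean structural alternative and, as a byproduct, shows that the summand $M^{(0,1)\times}$ of $W^\times$ actually contributes nothing.

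Two small points of care. First, left-exactness of $(-)^{U_I^+}$ only tells you that the successive quotients of the induced filtration \emph{embed} in $\nabla_n(\nu)^{U_I^+}$, not that they equal it; but an embedding is already enough for your $T_2$-weight bound, so the argument stands. Second, the statement that $\nabla_n(\nu)^{U_I^+}$ is pure of $T_2$-weight $(\nu_{n-1},\nu_n)$ is the $\nabla$-analogue of Proposition~\ref{janprop}, which the paper states only for $L_n(\nu)$; it is standard (Jantzen II.2), or you can recover it by exactly the paper's device, namely that any $\GL(n-2)$-primitive vector in $\nabla_n(\nu)^{U_I^+}$ is automatically $\GL(n)$-primitive and hence of weight $\nu$.
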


\begin{proof}

Let $\overline{e}_{I}$ denote the restriction of $e_{I}$  to $W^{ \times} $.  We begin by showing the image of $\overline{e}_{I}$ is contained in $(M \otimes \bigwedge\nolimits^2 V_n^*)^{U_{I}^{+}}$. 

Let $m \in W^{ \times} $. For $1 \leq a < n-1$ and $k>0$ we have 
\begin{align*}
E_{an}^{(k)}  e_{I}(m) &= \sum_{1 \leq i<j<n-1} \frac{1}{2} (E_{an}^{(k)}E_{jn}E_{in-1} - E_{a n}^{(k)}E_{in}E_{jn-1})m \otimes X_i \wedge X_j \\
& \hspace{0.4cm} - \sum_{i<n-1} \frac{1}{2} (E_{an}^{(k-1)}E_{an}E_{in-1} - E_{a n}^{(k-1)}E_{in}E_{an-1})m \otimes X_i \wedge X_n \\
& \hspace{0.4cm} -  \sum_{i<n-1} E_{an}^{(k)} E_{in} m \otimes X_{i} \wedge X_{n-1} - E_{an}^{(k-1)} E_{an} m \otimes X_{n-1} \wedge X_{n} \\
& \hspace{0.4cm} + \sum_{i<n-1} E_{an}^{(k)} E_{i n-1} m \otimes X_i \wedge X_n + E_{an}^{(k)} m \otimes X_{n-1} \wedge X_{n}. \\
\end{align*}
If $k>1$ this is zero since by  Lemma \ref{zero} all the terms are zero.  If $k=1$ then the terms left once we remove the terms which we know are zero by  Lemma \ref{zero} $(1)$ are the following:
\begin{align*}
E_{an} e_{I}(m) &= - \sum_{i<n-1} \frac{1}{2} (E_{an}E_{in-1} - E_{in}E_{an-1})m \otimes X_i \wedge X_n \\
& \hspace{0.4cm} - E_{an} m \otimes X_{n-1} \wedge X_n + \sum_{i<n-1} E_{an} E_{in-1} m \otimes X_i \wedge X_n + E_{an} m \otimes X_{n-1} \wedge X_n \\
&= \sum_{i<n-1} \frac{1}{2} ( E_{an}E_{in-1} + E_{in} E_{an-1}) m \otimes X_i \wedge X_n  \\
&=  \sum_{i<n-1} \frac{1}{2} ( (E_{an-1} E_{n-1n} - E_{n-1n}E_{an-1})E_{in-1} + E_{in} E_{an-1}) m \otimes X_i \wedge X_n \hspace{0.5cm}  \\
&=  \sum_{i<n-1} \frac{1}{2} ( E_{an-1} E_{n-1n} E_{in-1} + E_{in} E_{an-1}) m \otimes X_i \wedge X_n \hspace{0.5cm}   \\
&=  \sum_{i<n-1} \frac{1}{2} ( E_{an-1} (E_{in-1} E_{n-1 n} - E_{in}) + E_{in} E_{an-1}) m \otimes X_i \wedge X_n \hspace{0.5cm}    \\
&=  \sum_{i<n-1} \frac{1}{2} ( E_{an-1} E_{in-1} E_{n-1 n} ) m \otimes X_i \wedge X_n \hspace{0.5cm}  \\
\end{align*}
where we have again used the relations from Lemma \ref{relhyp} and Lemma \ref{zero}. Observe that it follows from the definition of $W^{\times}$ that $E_{n-1 n } m =0$. Therefore we have shown $E_{an}^{(k)} e_{I}(m) =0$ for $1 \leq a < n-1$,  $k>0$. If $a=n-1$ then, by Remark \ref{En-1n}, we have $E_{n-1n}^{(k)}  e_{I}(m) = e_{I}(E_{n-1n}^{(k)} m)$  and therefore $E_{n-1n}^{(k)}  e_{I}(m) =0$. 

For $1 \leq a < n-1$ and $k>0$ we have, 
\begin{align*}
E_{an-1}^{(k)}  e_{I}(m) &= \sum_{i<j<n-1} \frac{1}{2} (E_{an-1}^{(k)}E_{jn}E_{in-1} - E_{a n-1}^{(k)}E_{in}E_{jn-1})m \otimes X_i \wedge X_j \\
& \hspace{0.4cm} - \sum_{i<n-1} \frac{1}{2} (E_{an-1}^{(k-1)}E_{an}E_{in-1} - E_{a n-1}^{(k-1)}E_{in}E_{an-1})m \otimes X_i \wedge X_{n-1} \\
& \hspace{0.4cm} -  \sum_{i<n-1} E_{an-1}^{(k)} E_{in} m \otimes X_{i} \wedge X_{n-1} + \sum_{i<n-1} E_{an-1}^{(k)} E_{i n-1} m \otimes X_i \wedge X_n  \\
& \hspace{0.4cm} - E_{an-1}^{(k-1)} E_{a n-1} m \otimes X_{n-1} \wedge X_n + E_{an-1}^{(k)} m \otimes X_{n-1} \wedge X_{n}. \\
\end{align*}
If $k>1$ this is zero by Lemma \ref{zero} since all the terms in the sum are zero and if $k=1$ we have,
\begin{align*}
E_{an-1} e_{I_{n-1}}(m) &= - \sum_{i<n-1} \frac{1}{2} (E_{an}E_{in-1} - E_{in}E_{an-1})m \otimes X_i \wedge X_{n-1} \\
& \hspace{0.4cm} - \sum_{i<n-1} E_{an-1} E_{in} m \otimes X_{i} \wedge X_{n-1} - E_{a n-1} m \otimes X_{n-1} \wedge X_n \\
& \hspace{0.4cm} + E_{an-1} m \otimes X_{n-1} \wedge X_{n} \\
&= - \sum_{i<n-1} \frac{1}{2} (E_{an}E_{in-1} + E_{in}E_{an-1}) m \otimes X_i \wedge X_{n-1}  \\
&= - \sum_{i<n-1} \frac{1}{2} ( (E_{an-1} E_{n-1n} - E_{n-1n}E_{an-1})E_{in-1} + E_{in} E_{an-1}) m \otimes X_i \wedge X_{n-1} \hspace{0.5cm}   \\
&= - \sum_{i<n-1} \frac{1}{2} ( E_{an-1} E_{n-1n} E_{in-1} + E_{in} E_{an-1}) m \otimes X_i \wedge X_{n-1} \hspace{0.5cm} \\
&= - \sum_{i<n-1} \frac{1}{2} ( E_{an-1} (E_{in-1} E_{n-1 n} - E_{in}) + E_{in} E_{an-1}) m \otimes X_i \wedge X_{n-1} \hspace{0,5cm}    \\
&= -  \sum_{i<n-1} \frac{1}{2} ( E_{an-1} E_{in-1} E_{n-1 n} ) m \otimes X_i \wedge X_{n-1} \hspace{0.5cm}   \\
\end{align*}
where we have again used the relations from Lemma \ref{relhyp} and Lemma \ref{zero}. Since $E_{n-1n}m=0$ we have shown $E_{an-1}^{(k)} e_{I}(m) =0$ for $1 \leq a < n-1$, $k>0$. Since $E_{an}^{(k)} e_{I}(m) =0$ and $E_{bn-1}^{(k)} e_{I}(m) =0$ for $1 \leq a \leq n-1$, $1 \leq b \leq n-2$ and $k>0$ by Lemma \ref{prophyp} we have  $e_I(m) \in (M \otimes \bigwedge\nolimits^2 V_n^*)^{U_{I}^{+}}$. 

It remains to show that the restriction map $\overline{e_I}$ is surjective. Suppose $v$ is a vector in $(M \otimes \bigwedge\nolimits^2 V_n^*)^{U_{I}^{+}}$. Then it can be written as a sum of weight vectors $v = \sum_i x_i$. 
Since each $x_i$ is itself in $(M \otimes \bigwedge\nolimits^2 V_n^*)^{U_{I}^{+}}$ it is enough to show that for each weight vector $x \in (M \otimes \bigwedge\nolimits^2 V_n^*)^{U_{I}^{+}}$ there is a weight vector $m \in W^{\times}$ such that $x = e_I(m)$.

Let $x \in (M \otimes \bigwedge\nolimits^2 V_n^*)^{U_{I}^{+}}$  be a weight vector. It follows from Proposition \ref{hom} and the definition of the map $e_i$ that there exists $m \in M$ such that $x = e_I(m)$ and that $m$ is a weight vector. Since $x \in (M \otimes \bigwedge\nolimits^2 V_n^*)^{U_{I}^{+}}$  it follows from Lemma \ref{prophyp} and Remark \ref{En-1n} that $e_I( E_{n-1n}^{(k)} m)  = E_{n-1n}^{(k)} e_I (m) = 0$ for $k>0$.  Since $e_I$ is injective by Proposition \ref{hom} we have shown $E_{n-1n}^{(k)} m =0$ for all $k>0$.  

Suppose $m$ is $\GL(n-2)$-primitive. Then $E_{ij}^{(k)} m =0$ for all $1 \leq i < j \leq n-2$ and $k>0$. By Proposition \ref{hom} we have $E_{ij}^{(k)} x =e_I( E_{ij}^{(k)} m) =0$ for all $1 \leq i < j \leq n-2$ and $k>0$. Furthermore since $x \in (M \otimes \bigwedge\nolimits^2 V_n^*)^{U_{I}^{+}}$ by  Lemma \ref{prophyp} we have $E_{in-1}^{(k)} x =0$ and  $E_{jn}^{(k)} x =0$ for $1 \leq i \leq n-2$, $1 \leq j \leq n-1$ and $k>0$. Therefore $x$ is $\GL(n)$-primitive. Combining Proposition \ref{rem1} and Lemma \ref{prim} we can deduce that $x$ has weight $\lm - \epsilon_i -\epsilon_j$ for some $1 \leq i<j \leq n$. By the definition of the map $e_I$ this implies $m$ is a weight vector of weight $\lm - \epsilon_i -\epsilon_j + \epsilon_{n-1} + \epsilon_n$. Hence $m \in  W^{ \times}.$

Now suppose $m$ is not $\GL(n-2)$-primitive. Then there exists some product of elements of the form $E_{ab}^{(k)}$ with $1 \leq a<b \leq n-2$ and $k>0$ which applied to $m$ gives a $\GL(n-2)$-primitive vector. We will denote this particular product by $E_{AB}^{(K)}$ and let $m_{prim} := E_{AB}^{(K)}m$. We will now show that $m$ and $m_{prim}$ are in the same level $M^{(i,j)}.$

Let $m' := E_{ab}^{(k)} m$ for some $1 \leq a<b \leq n-2$ and $k>0$. Recall from for Section \ref{hypgln} that if $m \in M_{\nu}$ for some weight $\nu$ for $M$ then $E_{ab}^{(k)} M_{\nu} \subset M_{\nu + k( \alpha_{a} - \alpha_{b})}$. Since $a, b <n-1$, $\nu$ and $\nu + k( \alpha_a - \alpha_b)$ have the same last two entries and hence $m'$ and $m$ are in the same level $M^{(i,j)}$. Since we have shown $m_{prim}  \in W^{\times}$ and $E_{n-1n}^{(k)} m =0$ for all $k>0$ it follows that $m \in W^{\times}$. This completes the inductive argument and therefore we have shown $\overline{e}_I$ is a surjective, injective $\GL(n-2)$-homomorphism and therefore an isomorphism.  
\end{proof}

\section{\normalsize{COMPOSITION MULTIPLICITIES OF TENSOR PRODUCTS AND BRANCHING MULTIPLICITIES}}\label{keysec}

We continue to assume $p$ is an odd prime. We can now prove the key result relating composition factors of tensor products to branching multiplicities for the restriction of irreducible $\FGL(n)$-modules to $\GL(n-2)$. Recall that $I:=\{\alpha_1, \dots, \alpha_{n-3}\}$ and $U^{+}_I$ is the unipotent radical of the standard parabolic subgroup of $\GL(n)$ corresponding to $I$. Furthermore $L_I \cong \GL(n-2) \times T_2$, is the corresponding Levi subgroup, normalised by $U^{+}_I$. Let us recall the following result which is proved for general $I \subset S(n)$  in [II.2.11, \cite{JAN2003} ] but we only need applied to our choice of $I$. 

\begin{prop}\label{janprop}
Let $I \subset S(n)$ and $\lm \in X^{+}(n)$. Then:
\begin{enumerate}
\setlength\itemsep{0em}
\item $L_n(\lm)^{U^{+}_I} = \bigoplus_{\nu \in \Z I} L_n(\lm) _{\lm- \nu}$.
\item $\bigoplus_{\nu \in \Z I} L_n(\lm) _{\lm- \nu}$ is a simple $L_I$-module with highest weight $\lm$.
\end{enumerate}
\end{prop}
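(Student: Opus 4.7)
This proposition is the specialisation of [II.2.11,\cite{JAN2003}] to our choice $I = \{\alpha_1, \ldots, \alpha_{n-3}\}$ with $L_I \cong \GL(n-2) \times T_2$, so my plan is to sketch that argument. The strategy is to identify $V := L_n(\lm)^{U_I^+}$ with the $L_I$-submodule $N := U_L v_\lm$ generated by the global highest weight vector $v_\lm$, where $U_L$ denotes the hyperalgebra of $L_I$; both parts then follow, using that $N$ realises precisely the weights $\lm - \Z_{\geq 0} I$.

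First, since $L_I$ normalises $U_I^+$, the subspace $V$ is $L_I$-stable, in particular $T(n)$-stable and a direct sum of its $T(n)$-weight spaces. For the easy inclusion in (1), I would take a weight vector $v \in L_n(\lm)_{\lm-\nu}$ with $\nu \in \Z I$ and apply the generalisation of Lemma \ref{prophyp}(3) to $U_I^+$: it suffices to show $E_\alpha^{(r)} v = 0$ for every $\alpha \in R^+(n) \setminus R_I$ and $r > 0$. The target would sit in the weight space of weight $\lm - \nu + r\alpha$; since $\nu$ involves only simple roots from $I$ whereas $\alpha$ has a strictly positive coefficient at some simple root outside $I$, this weight is not subdominant to $\lm$, so the weight space is zero.

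Next I would exploit that $v_\lm$ is $U^+(n)$-primitive and hence $U_L^+$-primitive, so $N = U_L v_\lm$ is a highest-weight $L_I$-module of highest weight $\lm$, and $N \subseteq V$. The key structural observation is that every nonzero $L_I$-submodule $W \subseteq V$ contains $v_\lm$: by Lie--Kolchin the unipotent group $U_L^+$ has a nonzero fixed vector $w \in W$, and $w$ is then killed by $U_L^+$ and (because $w \in V$) by $U_I^+$, hence by all of $U^+(n)$; in the simple module $L_n(\lm)$ the only such primitive vectors are scalars of $v_\lm$. Applied with $W = N$ this proves $N$ is a simple $L_I$-module; applied to arbitrary $W$ it shows that $N$ is the socle of $V$.

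The main obstacle is upgrading ``$N$ is the socle of $V$'' to the equality $V = N$, which simultaneously completes both claims. Suppose for contradiction $V \supsetneq N$; then $V/N$ has a nonzero highest-weight vector $\bar w$ of some weight $\mu < \lm$, and because $U_I^+$ fixes $V$ pointwise it acts trivially on $V/N$, so $\bar w$ is $U^+(n)$-fixed in $V/N$. Lifting to $w \in V_\mu$ yields a vector with $E_\alpha^{(r)} w \in N$ for $\alpha \in R_I^+, r > 0$ and $E_\alpha^{(r)} w = 0$ for $\alpha \in R^+ \setminus R_I, r > 0$. One then wishes to correct $w$ by some $n \in N_\mu$ so that $w - n$ is $U^+(n)$-primitive in $L_n(\lm)$, forcing $w - n \in \F v_\lm$ and contradicting $\mu < \lm$. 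The required $n$ must satisfy $E_\alpha^{(r)} n = E_\alpha^{(r)} w$ for every $\alpha \in R_I^+$ and $r > 0$, and its existence amounts to a vanishing statement for a cohomological obstruction; this in turn follows from simplicity of $N$ as an $L_I$-module together with the precise structure of its $U_L^+$-action on lower weight spaces. Making this cohomological step rigorous is the technical heart of [II.2.11,\cite{JAN2003}].
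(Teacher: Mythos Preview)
The paper does not prove this proposition; it is simply quoted from [II.2.11, \cite{JAN2003}], so there is no in-paper argument to compare against. Your inclusion $N \subseteq V$ and your proof that every nonzero $L_I$-submodule of $V$ contains $v_\lm$ (hence $N$ is simple and equals the socle of $V$) are both correct. The gap is the reverse inclusion $V \subseteq N$. You propose to lift a $U_L^+$-primitive $\bar w \in V/N$ to $w \in V_\mu$ and then correct it by some $n \in N_\mu$ so that $w-n$ becomes globally primitive, but the claim that such an $n$ exists ``by simplicity of $N$'' is not justified: simplicity of $N$ as an $L_I$-module gives no control over whether the system $E_\alpha^{(r)} n = E_\alpha^{(r)} w$ (for $\alpha \in R_I^+$, $r>0$) is solvable in $N_\mu$, and this is not the route taken in \cite{JAN2003}.

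Jantzen's argument bypasses the lifting problem entirely. The key input, established just prior via transitivity of induction, is that $\nabla_n(\lm)^{U_I^+}$ is isomorphic as an $L_I$-module to the induced module $\nabla_{L_I}(\lm)$ for $L_I$; in particular every weight of $\nabla_n(\lm)^{U_I^+}$ lies in $\lm - \Z I$. Since $L_n(\lm) \hookrightarrow \nabla_n(\lm)$ and $(-)^{U_I^+}$ is left exact, one obtains $V \hookrightarrow \nabla_{L_I}(\lm)$, which forces all weights of $V$ into $\lm - \Z I$ and hence $V = N$, giving (1). For (2), $N$ is then a nonzero submodule of $\nabla_{L_I}(\lm)$ and, via the surjection $\Delta_n(\lm)\twoheadrightarrow L_n(\lm)$ restricted to the weights in $\lm-\Z I$, also a quotient of $\Delta_{L_I}(\lm)$; since $\Hom_{L_I}(\Delta_{L_I}(\lm),\nabla_{L_I}(\lm))$ is one-dimensional with image the simple $L_I$-module of highest weight $\lm$, $N$ is simple. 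Your socle argument is a valid alternative for (2) once (1) is in hand.
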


We should note that in the case that $I:=\{\alpha_1, \dots, \alpha_{n-3}\}$ considered below any $\GL(n-2)$-submodule of $L_n(\lm)^{U^{+}_I}$ is also a $\GL(n-2) \times T_2$-submodule. Therefore by Proposition \ref{janprop} $L_n(\lm)^{U^{+}_I}$ is a simple $\GL(n-2)$-module. 

We say that a $\FGL(n)$-module $M$ is polynomial of degree $d$ if every nonzero weight space of $M$ is of weight $\mu$ such that $\mu_i \geq 0$ for $1 \leq i \leq n$ and $|\mu| = d$. 
For a weight $\mu \in X^{+}(n)$ we denote by $\overline{\overline{\mu}}$ the element of $X^{+}(n-2)$ obtained by deleting the last two entries of $\mu$. 
\begin{lem}\label{poly1}
Suppose $\mu \in X^{+}(n)$ such that $\mu_{n-1} = \mu_{n} =0$ and let $M$ be polynomial of degree $d = |\mu|$. Then $$[M  :  L_n(\mu)] = [M^{U_{I}^{+}} : L_{n-2}(\overline{\overline{\mu}})].$$ 
\end{lem}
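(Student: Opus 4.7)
The plan is to decompose $M^{U_I^+}$ under the action of the two-dimensional torus $T_2 \subset L_I$, identify its $(0,0)$-component with the subspace $M^{(0,0)}$ of weight spaces of $M$ whose last two entries vanish, and then reduce to the simple case via a composition series combined with Proposition \ref{janprop}.

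First I would observe that, since $T_2$ normalises $U_I^+$ and $M^{U_I^+}$ is an $L_I$-module, it splits as a direct sum $\bigoplus_{(a,b)} (M^{U_I^+})^{(a,b)}$ of $T_2$-weight components, each a $\GL(n-2)$-submodule. I would then show that the polynomial hypothesis forces $(M^{U_I^+})^{(0,0)} = M^{(0,0)}$: for any weight $\nu$ with $\nu_{n-1} = \nu_n = 0$ and any $k > 0$, the weight spaces $M_{\nu + k(\epsilon_i - \epsilon_n)}$ and $M_{\nu + k(\epsilon_i - \epsilon_{n-1})}$ correspond to weights with a negative entry and so vanish by polynomiality, hence $E_{in}^{(k)} M_\nu = E_{in-1}^{(k)} M_\nu = 0$; Lemma \ref{prophyp} then gives $M^{(0,0)} \subseteq M^{U_I^+}$.

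The key reduction is a simple degree count. Every weight appearing in $(M^{U_I^+})^{(a,b)}$ has total sum $d$ with last two entries summing to $a+b$, so its restriction to $T(n-2)$ has total sum $d - a - b$; on the other hand, every $T(n-2)$-weight of $L_{n-2}(\overline{\overline{\mu}})$ has total sum $|\overline{\overline{\mu}}| = d$. Consequently $L_{n-2}(\overline{\overline{\mu}})$ can occur as a composition factor of $(M^{U_I^+})^{(a,b)}$ only when $a+b=0$, which forces $(a,b) = (0,0)$ since $a,b \geq 0$ by polynomiality. This yields
$$[M^{U_I^+} : L_{n-2}(\overline{\overline{\mu}})] = [M^{(0,0)} : L_{n-2}(\overline{\overline{\mu}})].$$

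Finally I would take any $\GL(n)$-composition series of $M$ with simple sections $L_n(\nu^{(i)})$. Since $N \mapsto N^{(0,0)}$ is a direct sum of weight-space functors it is exact, so $M^{(0,0)}$ admits a $\GL(n-2)$-filtration with sections $L_n(\nu^{(i)})^{(0,0)}$. By Proposition \ref{janprop}, $L_n(\nu^{(i)})^{U_I^+}$ is a simple $L_I$-module of highest weight $\nu^{(i)}$, hence isomorphic as a $\GL(n-2)$-module to $L_{n-2}(\overline{\overline{\nu^{(i)}}})$ carrying $T_2$-weight $(\nu^{(i)}_{n-1}, \nu^{(i)}_n)$, so its $(0,0)$-component vanishes unless $\nu^{(i)}_{n-1} = \nu^{(i)}_n = 0$. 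The polynomial condition $|\nu^{(i)}| = d = |\mu|$ together with $\overline{\overline{\nu^{(i)}}} = \overline{\overline{\mu}}$ then forces $\nu^{(i)} = \mu$, so $[M^{(0,0)} : L_{n-2}(\overline{\overline{\mu}})]$ counts exactly the composition factors of $M$ isomorphic to $L_n(\mu)$, giving $[M : L_n(\mu)]$. The only delicate point is the degree-counting step isolating the $(0,0)$-component; everything else is a direct consequence of Proposition \ref{janprop} and the polynomiality of $M$.
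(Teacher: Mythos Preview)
Your argument is correct and takes a genuinely different route from the paper. The paper proceeds by induction along a composition series of $M$, working directly with the left-exact functor $(-)^{U_I^+}$: from $0 \to M_{(i-1)}^{U_I^+} \to M_{(i)}^{U_I^+} \to L_n(\nu)^{U_I^+}$ it must check by hand that the last map is \emph{surjective} when $\nu = \mu$, using that vectors of weight $\mu - \eta$ with $\eta \in \Z I$ are automatically killed by all $E_{an}^{(k)}, E_{an-1}^{(k)}$ because such weights have last two entries zero. You sidestep this surjectivity issue entirely by passing to the exact functor $N \mapsto N^{(0,0)}$ (a direct sum of weight-space functors), and then linking back to $M^{U_I^+}$ via the identity $(M^{U_I^+})^{(0,0)} = M^{(0,0)}$ together with the degree count that kills the other $T_2$-components. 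This is a cleaner packaging of the same underlying observation (polynomiality forces $M^{(0,0)} \subseteq M^{U_I^+}$), and it makes the additivity over composition factors automatic rather than something to be verified. One small point worth making explicit in your write-up: when you analyse $L_n(\nu^{(i)})^{(0,0)}$ in the last step, you are implicitly reusing the inclusion $L_n(\nu^{(i)})^{(0,0)} \subseteq L_n(\nu^{(i)})^{U_I^+}$ (valid since each composition factor of $M$ is again polynomial) to conclude that this level is either zero or all of $L_n(\nu^{(i)})^{U_I^+}$; stating this would remove any ambiguity about what ``its $(0,0)$-component'' refers to.
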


\begin{proof}
 Let $0 = M_{(0)} < \dots < M_{(t)} = M$ be a composition series for $M$. We will prove inductively that $[M_{(i)}  :  L_n(\mu)] = [M_{(i)}^{U_{I}^{+}} : L_{n-2}(\overline{\overline{\mu}})]$ for $1 \leq i \leq t$. Suppose $$[M_{(i-1)} : L_n(\mu)] = [M_{(i-1)}^{U_{I}^{+}} : L_{n-2}(\overline{\overline{\mu}})]$$ and let $\nu \in X^{+}(n)$ be the dominant weight such that $M_{(i)} / M_{(i-1)} \cong L_n(\nu)$. We have a short exact sequence $$0 \rightarrow M_{(i-1)}^{U_{I}^{+}} \rightarrow M_{(i)}^{U_{I}^{+}} \rightarrow (M_{(i)}/M_{(i-1)})^{U_{I}^{+}}$$ where $(M_{(i)}/M_{(i-1)})^{U_{I}^{+}} \cong L_n(\nu)^{U_{I}^{+}}$ which by Proposition \ref{janprop} is isomorphic to the $\GL(n-2)\times T_2$ module with highest weight $\nu$. 
 
 Note that $\mu = \nu$ if and only if $\overline{\overline{\mu}} = \overline{\overline{\nu}}$ since $|\nu| = |\mu| = |\overline{\overline{\mu}}|$. 
 
Therefore, restricting to $\GL(n-2)$,  if $\mu \neq \nu$ then $\overline{\overline{\mu}} \neq \overline{\overline{\nu}}$ and we have $$ [M_{(i)}^{U_{I}^{+}} : L_{n-2 }(\overline{\overline{\mu}})] = [M_{(i-1)}^{U_{I}^{+}} : L_{n-2 }(\overline{\overline{\mu}})]  =  [M_{(i-1)} : L_{n}(\mu)] = [M_{(i)} : L_{n}(\mu)]. $$ 
 
 If $\mu = \nu$ then we will show that the map $$M_{(i)}^{U_{I}^{+}} \rightarrow (M_{(i)}/M_{(i-1)})^{U_{I}^{+}}$$ is surjective. Let $v + M_{(i-1)} \in (M_{(i)}/M_{(i-1)})^{U_{I}^{+}} \cong L(\mu)^{U_{I}^{+}} \cong \bigoplus_{\eta \in \Z {S \backslash I}} L(\mu)_{\mu-\eta}$. Then $v$ can be chosen to be a sum of weight vectors $x$ of weight $\mu-\eta$ for some $\eta \in \Z {I}$. Note that $I$ does not contain $\epsilon_{n-2} - \epsilon_{n-1}$ or $\epsilon_{n-1} - \epsilon_n$ and therefore since $\mu_{n-1} = \mu_{n} =0$ we have $(\mu-\eta)_{n-1} = (\mu-\eta)_{n} = 0$. Therefore for each weight vector $x$ of weight $\mu - \eta$ we must have $E_{an}^{(k)} x= 0$ for $k>0$ and $1 \leq a \leq n-1$ since the weight of $E_{an}^{(k)} x$ has a negative entry and all of the weights of $M$ have non-negative entries by assumption that $M$ is polynomial. Similarly $E_{an-1}^{(k)} x = 0$ for $k>0$ and $1 \leq a < n-1$. Therefore $E_{an}^{(k)} v = \sum_{x} E_{an}^{(k)}  x =0$ and hence $v \in M_{(i)}^{U_{I}^{+}}$.
 
 Therefore  \begin{align*} [M_{(i)}^{U_{I}^{+}} : L_{n-2 }(\overline{\overline{\mu}})] &= [M_{(i-1)}^{U_{I}^{+}} : L_{n-2 }(\overline{\overline{\mu}})] + 1 \\
 &= [M_{(i-1)} : L_n(\mu)] + [ L_n(\nu) : L_n(\mu) ] \\
 &=  [M_{(i)} : L_n(\mu)].
 \end{align*} This completes the inductive step and hence the statement is true for all $0 \leq i \leq t$ in particular $$[M  :  L_n(\mu)] = [M^{U_{I}^{+}} : L_{n-2}(\overline{\overline{\mu}})].$$ 
\end{proof}

Recall we are interested in relating composition factors of $M \otimes \bigwedge\nolimits^2 V_n^*$ where $M$ is a simple $\GL(n)$-module to composition factors of $M$ restricted to $\GL(n-2)$. In particular we want to apply the previous lemma to the case where $M= L(\lm) \otimes \bigwedge\nolimits^2 V_n^*$. However this will not be polynomial and so we cannot do this directly . Instead we will show that the composition factors we are interested in show up in the \say{polynomial part} of $L(\lm) \otimes \bigwedge\nolimits^2 V_n^*$. To do that we need to describe a certain filtration of $L(\lm) \otimes \bigwedge\nolimits^2 V_n^*$. 

Recall the definitions of the hyperalgebra $U(n, \F)$ and subalgebras $U^{-}(n)$,  $U^{+}(n)$ and $U^{0}(n)$ from Section \ref{hypgln}. Recall further that $U(n, \F) = U^{-}(n)U^{0}(n)U^{+}(n)$ and therefore to deduce some subspace $M$ of a $U(n, \F)$-module is a $U(n, \F)$-submodule it is enough to show it is invariant under each of the subalgebras $U^{-}(n)$,  $U^{+}(n)$ and $U^{0}(n)$.

\begin{lem}\label{BKlem}
Let $\lm \in X^{+}(n)$. There exists a filtration $0 \leq N_{(1)} \leq \dots \leq N_{(t)} = L_n(\lm) \otimes \bigwedge\nolimits^2 V_n^*$ such that each $N_{(i)}/N_{(i-1)}$ is a possibly zero quotient of $\Delta_n(\lm- \epsilon_x - \epsilon_y)$ for $(x,y)$ a $\lm$-removable pair. 
\end{lem}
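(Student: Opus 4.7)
The plan is to obtain the filtration of $L_n(\lm) \otimes \bigwedge\nolimits^2 V_n^*$ by pushing forward a Weyl filtration of $\Delta_n(\lm) \otimes \bigwedge\nolimits^2 V_n^*$ along the canonical surjection $\Delta_n(\lm) \twoheadrightarrow L_n(\lm)$ tensored with the identity on $\bigwedge\nolimits^2 V_n^*$.

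First I would show that $\Delta_n(\lm) \otimes \bigwedge\nolimits^2 V_n^*$ admits a filtration $0 = D_{(0)} \subset D_{(1)} \subset \cdots \subset D_{(t)} = \Delta_n(\lm) \otimes \bigwedge\nolimits^2 V_n^*$ whose successive quotients, in some order, are precisely the Weyl modules $\Delta_n(\lm - \epsilon_x - \epsilon_y)$ for $(x,y)$ a $\lm$-removable pair, each occurring with multiplicity one. The highest weight of $\bigwedge\nolimits^2 V_n^*$ is the minuscule weight $\epsilon = (0,\dots,0,-1,-1)$, and the $W_n$-orbit of $\epsilon$ consists of the weights $-\epsilon_i - \epsilon_j$ for $1 \leq i < j \leq n$. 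By Lemma \ref{minweigt} the module $\nabla_n(\lm) \otimes \bigwedge\nolimits^2 V_n^*$ has a good filtration with factors $\nabla_n(\lm - \epsilon_x - \epsilon_y)$ for $(x,y)$ $\lm$-removable (one for each such pair). Applying the contravariant Chevalley duality $\tau$ (cf.\ [II.2.12, \cite{JAN2003}]), which satisfies $\Delta_n(\mu)^{\tau} \cong \nabla_n(\mu)$, $L_n(\mu)^\tau \cong L_n(\mu)$ and $(M \otimes M')^\tau \cong M^\tau \otimes M'^\tau$, translates this good filtration into the desired Weyl filtration of $\Delta_n(\lm) \otimes \bigwedge\nolimits^2 V_n^*$ (with the order of the factors reversed).

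Next, I would push this filtration down. The canonical surjection $\Delta_n(\lm) \twoheadrightarrow L_n(\lm)$ yields a $\GL(n)$-surjection
$$\pi \colon \Delta_n(\lm) \otimes \bigwedge\nolimits^2 V_n^* \twoheadrightarrow L_n(\lm) \otimes \bigwedge\nolimits^2 V_n^*.$$
Setting $N_{(i)} := \pi(D_{(i)})$ produces an ascending chain of $\GL(n)$-submodules of $L_n(\lm) \otimes \bigwedge\nolimits^2 V_n^*$ with $N_{(0)} = 0$ and $N_{(t)} = L_n(\lm) \otimes \bigwedge\nolimits^2 V_n^*$. Since $\pi(D_{(i-1)}) \subseteq \pi(D_{(i)})$, the map $\pi$ induces a $\GL(n)$-surjection
$$D_{(i)}/D_{(i-1)} \twoheadrightarrow N_{(i)}/N_{(i-1)},$$
which exhibits each $N_{(i)}/N_{(i-1)}$ as a (possibly zero) quotient of $D_{(i)}/D_{(i-1)} \cong \Delta_n(\lm - \epsilon_{x_i} - \epsilon_{y_i})$ for the $\lm$-removable pair $(x_i, y_i)$ labelling that factor of the Weyl filtration. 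This gives the filtration required by the lemma.

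The main step of the proof, and the one most in need of care, is the verification that $\Delta_n(\lm) \otimes \bigwedge\nolimits^2 V_n^*$ has a Weyl filtration with exactly the claimed factors. Lemma \ref{minweigt} is stated only for the good-filtration side, so the dualisation via $\tau$ has to be spelled out (in particular, that $\tau$ sends a good filtration to a Weyl filtration with the dual factors, and that $\tau$ fixes the minuscule simple $\bigwedge\nolimits^2 V_n^*$ since it equals its own head-socle). Once this is in place, the push-forward argument is essentially formal.
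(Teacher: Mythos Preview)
Your argument is correct and actually yields a slightly stronger statement: the successive quotients of $\Delta_n(\lm) \otimes \bigwedge\nolimits^2 V_n^*$ are \emph{isomorphic} to Weyl modules, not merely quotients of them. The duality step is sound: $\tau$ fixes the simple module $\bigwedge\nolimits^2 V_n^* = L_n(0,\dots,0,-1,-1)$, is compatible with tensor products, and sends a good filtration with factors $\nabla_n(\mu)$ to a Weyl filtration with factors $\Delta_n(\mu)$ (in reversed order). The push-forward along $\pi$ is then formal, exactly as you wrote.

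The paper takes a more hands-on route: it orders the pairs $(i,j)$ explicitly, builds $M_{(i)}$ as the $U^-(n)$-submodule generated by $M_{(i-1)}$ and $w_\lm \otimes X_{s_i}$, checks $U^+(n)$-invariance directly, and invokes the universal property of $\Delta_n$ to identify each quotient as a highest-weight image. Your approach is cleaner and avoids all the hyperalgebra computations by leveraging Lemma~\ref{minweigt} plus duality. What the paper's construction buys, however, is an explicit ordering $s_1,\dots,s_t$ of the $\lm$-removable pairs in which all pairs with second coordinate $n$ come first. This ordering is invoked verbatim in the proof of Theorem~\ref{key}, where one needs the factors $\Delta_n(\lm-\epsilon_x-\epsilon_n)$ to sit at the bottom of the filtration so that $N_{(a)}\subseteq N'$. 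Your proof, as written, gives no control over the order of the Weyl factors; if you want to feed it into Theorem~\ref{key} you would need to add a remark that a Weyl filtration can be rearranged so that factors with larger highest weight appear lower (which handles the $y=n$ pairs since $\lm-\epsilon_x-\epsilon_n \not< \lm-\epsilon_{x'}-\epsilon_{y'}$ for $y'<n$), or simply choose the good filtration in Lemma~\ref{minweigt} with the desired order before dualising.
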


\begin{proof} 

We will prove the statement of the lemma by first constructing a filtration for $\Delta_n(\lm) \otimes  \bigwedge\nolimits^2 V_n^*$ and then considering the image under the map induced by the surjection $\Delta_n(\lm) \rightarrow L_n(\lm)$. 

We begin by defining an ordering on pairs $(i,j)$ for $1\leq i < j \leq n$. Let $r_1:=(n-1,n)$ and for $r_a = (i,j)$ define $$r_{a+1} = \begin{cases} (i-1,j) \hspace{1cm} & i \neq 1 \\ (j-2, j-1) \hspace{1cm} & i = 1 \end{cases}.$$ If $r_a=(i,j)$ let $X_{r_a} = X_{i} \wedge X_j$ and let $s:=  {n\choose2}$ so that $r_s = (1,2)$. 

Let $w_{\lm}$ denote a weight vector in $\Delta_n(\lm)$ of weight $\lm$ so that $w_{\lm}$ generates $\Delta_n(\lm)$ as a $U^{-}(n)$-module. This means that for any $w \in \Delta_n(\lm)$ we have $w=f w_{\lm}$ for some $f \in U^{-}(n)$. We now show by a decreasing induction on $1 \leq a \leq s$ that $w \otimes X_{r_a}$ lies in the $U^{-}(n)$-module generated by $w_{\lm} \otimes X_{r_a}, \dots w_{\lm} \otimes X_{r_s}$. We have $$f(w_{\lm} \otimes X_{r_a}) = w \otimes X_{r_a} + q$$ where $q$ is a linear combination of elements $w' \otimes X_{r_b}$ with $w' \in \Delta_n(\lm)$ and $b > a$. By induction these elements lie in the $U^{-}(n)$-module generated by $w_{\lm} \otimes X_{r_{a+1}}, \dots, w_{\lm} \otimes X_{r_s}$ hence the inductive hypothesis is satisfied. Therefore  $\Delta_n(\lm) \otimes \bigwedge\nolimits^2 V_n^*$ is generated as a $U^{-}(n)$-module by $\{w_{\lm} \otimes X_{r_a} | 1 \leq a \leq s\}$. 

Now let $s_1, \dots, s_t$ be the pairs which are removable for $\lm$ ordered by their place in the ordering above (so $s_1 = r_1 = (n-1,n)$).  Suppose a pair $r_a:=(i,j)$ is not removable for $\lm$. Then there is a $\lm$-removable pair $s_b:= (x,y)$ such that $\lm_{i} = \lm_{x}$ and $\lm_{j} = \lm_{y}$. If $i \neq x$ then $F_{ix} w_{\lm} \in \Delta_n(\lm)_{\mu}$ where $\mu = \lm - (\epsilon_i - \epsilon_x)$. Furthermore $\Delta_n(\lm)_{\mu} \neq 0$ only if $\Delta_n(\lm)_{w \mu} \neq 0$ for each $w \in W_n$. Consider the element $\sigma_{i,x} \in W_n$ which swaps the $i^{th}$ and $x^{th}$ entries of a weight $\mu$. We have \begin{align*} \lm - \sigma_{i,x} \mu &= (0, \dots, \lm_i-\lm_{x}+1, 0, \dots, \lm_{x}-\lm_i -1, \dots, 0) \\ &=-(\epsilon_i - \epsilon_{x}). \end{align*} Hence $\sigma_{i,{x}} \mu$ is not subdominant to $\lm$. Therefore $\Delta_n(\lm)_{\mu} = 0$ and $F_{ix} w_{\lm}=0$. The same argument shows that if $j \neq y$ then $F_{jy} w_{\lm}=0$. Therefore $$F_{ix}F_{jy} ( w_{\lm} \otimes X_{s_b}) = w_{\lm} \otimes X_{i} \wedge X_{j}$$ where, for brevity, $F_{ab}$ is interpreted as $1$ if $a=b$. It follows that $\Delta_n(\lm) \otimes \bigwedge\nolimits^2 V_n^*$ is generated as a $U^{-}(n)$-module by $\{w_{\lm} \otimes X_{s_b} | 1 \leq b \leq t\}$. 

Let $M_{(0)} = 0$ and for $i \geq 1$ define $M_{(i)}$ be the $U^{-}(n)$-submodule of  $M:= \Delta_n(\lm) \otimes \bigwedge\nolimits^2 V_n^*$ generated by $M_{(i-1)}$ and $w_{\lm} \otimes X_{s_i}$. We will now show by induction that $M_{(i)}$ is a $\GL(n)$-module giving us a filtration of $M$. 

Note that since $M_{(i)}$ is generated as a $U^{-}(n)$-module by weight vectors it is $U^{(0)}$-invariant. We wish to show $M_{(i)}$ is a $U(n, \F)$-module and therefore a $\GL(n)$-module. It remains to show $M_{(i)}$ is a $U^{+}(n)$-module. Recall a vector $v + M_{(i-1)} \in M_{(i)}/M_{(i-1)}$ is primitive if and only if $E_{ij}^{(k)} v \in M_{(i-1)}$ for all $1 \leq i <j \leq n$ and $k>0$. Since we assume $M_{(i-1)}$ is  $U^{+}(n)$-invariant, if $M_{(i)}/M_{(i-1)}$ is generated by primitive vectors then it is a $U^{+}(n)$-invariant. Therefore it is enough to show that $w_{\lm} \otimes X_{{s_i}} + M_{(i-1)}$ is primitive in $M_{(i)} / M_{(i-1)}$.

We may assume $i>0$ and $M_{(i-1)} \neq M$ since otherwise the statement is obvious.
We may also assume $w_{\lm}\otimes X_{s_i} + M_{(i-1)}$ is nonzero since the zero vector is always primitive.  If $s_i = (x,y)$ then $$E_{ab}^{(k)} (w_{\lm}\otimes X_{s_i} )= \begin{cases} -w_{\lm}\otimes X_{b} \wedge X_{y}  \hspace{1cm} & k=1, a=x \\  -w_{\lm}\otimes X_{x} \wedge X_{b}  \hspace{1cm} & k=1, a=y \\ 0  \hspace{1cm} & \text{otherwise} \end{cases}$$ for $k>0$.
In the first option above $b>x$ and in the second $b>y$ so in all cases $E_{ab}^{k} (w_{\lm}\otimes X_{s_i} ) \in M_{(i-1)}$ and hence $w_{\lm}\otimes X_{s_i} + M_{(i-1)}$  is primitive in $M_{(i)}/M_{(i-1)}$. This completes our inductive argument but also implies $M_{(i)}/M_{(i-1)}$ is a quotient of $\Delta_n(\lm- \epsilon_x - \epsilon_y)$ where $s_i = (x, y)$ (see Lemma \ref{prim}).

Finally let $0= \overline{M_{(0)}} \leq\overline{M_{(1)}} \leq \dots \leq L_n(\lm) \otimes \bigwedge\nolimits^2 V_n^*$ be the image of the above filtration under the map $\Delta_n(\lm) \otimes \bigwedge\nolimits^2 V_n^* \rightarrow L_n(\lm) \otimes \bigwedge\nolimits^2 V_n^*$. Note that $\overline{M_{(i)}}$ may be equal to  $\overline{M_{(i-1)}}$ but $\overline{M_{(i)}}/\overline{M_{(i-1)}}$ will be a (possibly zero) quotient of $\Delta_n(\lm- \epsilon_x - \epsilon_y)$. 

\end{proof}

Let us now prove Theorem \ref{key}.

 \begin{proof}[Proof of Theorem \ref{key}]
 If $N:=L_n(\lm) \otimes \bigwedge\nolimits^2 V_n^*$ then by the linkage principle we can write $N = N' \oplus N''$ where $N'$ is a sum of weight spaces $N_{\nu}$ for weights $\nu$ such that $\nu \sim \lm - \epsilon_i - \epsilon_n$ where $1 \leq i \leq n-1$ and $(i,n)$ is a $\lm$-removable pair.
 
Since $\mu$ is not a weight of $N'$ we have \begin{equation}\label{one} [N: L_n(\mu)]= [N': L_n(\mu)]+ [N'': L_n(\mu)] = [N'': L_n(\mu)]. \end{equation} We will now show that \begin{equation}\label{two} [N^{U_{I}^{+}}: L_{n-2}(\overline{\overline{\mu}})] = [N''^{U_{I}^{+}} : L_{n-2}(\overline{\overline{\mu}})]. \end{equation} 

We have \begin{equation}\label{three} [N^{U_{I}^{+}} : L_{n-2}(\overline{\overline{\mu}})] = [N''^{U_{I}^{+}} : L_{n-2}(\overline{\overline{\mu}})] + [N'^{U_{I}^{+}} : L_{n-2}(\overline{\overline{\mu}})] \end{equation} and we will show that $[N'^{U_{I}^{+}} : L_{n-2}(\overline{\overline{\mu}})] =0$. We will do this by showing that for each composition factor $L_n(\nu)$ of $N'$ we have $$[L_n(\nu)^{U_{I}^{+}} : L_{n-2}(\overline{\overline{\mu}})]=0.$$ Suppose $L_{n-2}(\overline{\overline{\mu}})$ is a composition factor of $L_n(\nu)^{U_{I}^{+}}$. Then by Proposition \ref{janprop} we have that $\overline{\overline{\mu}}= \overline{\overline{\nu}}$. By our assumptions on $\mu$ this implies $\nu_{n-2} = \mu_{n-2} = 0$ and $|\overline{\overline{\nu}}| = |\overline{\overline{\mu}}| = |\lambda | -2$. 

Recall that the weight $\nu$ of $N'$ must be linked to some $\lm-\epsilon_i -\epsilon_j$ for $1 \leq i \leq n-1$ and $j =n$. Therefore $|\nu| = \sum_{\alpha \in \Z/p\Z} cont_{\alpha}(\lm-\epsilon_i-\epsilon_j) = |\lm - \epsilon_i - \epsilon_j| = |\lm| -2$.

Note this implies $|\nu| = |\overline{\overline{\nu}}|$ and hence that $\nu_{n-1} + \nu_{n} =0$. Combining this with the fact that $\nu_{n-2}=0$ and $\nu \in X^{+}(n)$ and hence $0 = \nu_{n-2} \geq \nu_{n-1} \geq \nu_{n}$ we may deduce that $\nu_{n-1} = \nu_{n} = 0$ and hence $\mu = \nu$. This is a contradiction as $\mu$ is not a weight of $N'$. Therefore $[N'^{U_{I}^{+}} : L_{n-2}(\overline{\overline{\mu}})] =0$ and (\ref{two}) follows from $(\ref{three})$.

We will now show that \begin{equation}\label{four}  [N^{U_{I}^{+}} : L_{n-2}(\overline{\overline{\mu}})] = [ L_n(\lm)^{(1,1) \times} : L_{n-2}(\overline{\overline{\mu}})]. \end{equation}

It follows from Proposition \ref{iso} that $$[N^{U_{I}^{+}}: L_{n-2}(\overline{\overline{\mu}})] = [L_n(\lm)^{(0,0)} \oplus L_n(\lm)^{(1,0)} \oplus L_n(\lm)^{(0,1) \times} \oplus L_n(\lm)^{(1,1) \times} : L_{n-2}(\overline{\overline{\mu}})].$$ Furthermore the weights $\eta$ corresponding to nonzero weight spaces of $L_n(\lm)$ are subdominant to $\lm$ and therefore can be derived from $\lm$ by subtracting linear combinations of the simple roots $\epsilon_i - \epsilon_{i+1}$ which does not affect $|\eta|$ and hence $|\eta| = |\lm|$. Recall that $$L_n(\lm)^{(0,0)} = \sum_{\nu_{n-1} = \nu_n = 0 \\ |\nu|=|\lm|} L_n(\lm)_{\nu}$$ so as a $\GL(n-2)$-module its only weights are of the form $\eta$ where$|\eta| = |\lm|$. Therefore $\overline{\overline{\mu}}$ is not a weight of $L_n(\lm)^{(0,0)}$. Similarly one can see $\overline{\overline{\mu}}$ is not a weight of $L_n(\lm)^{(1,0)}$ or $L_n(\lm)^{(0,1)}$ and hence we have shown equation(\ref{four}). 

It follows from (\ref{one}) and (\ref{two}) that to show that $$[N: L_n(\mu)] = [N^{U_{I}^{+}} : L_{n-2}(\overline{\overline{\mu}})]$$ it is enough to show  \begin{equation}\label{five} [N'': L_n(\mu)] = [N''^{U_{I}^{+}} : L_{n-2}(\overline{\overline{\mu}})] .\end{equation} Therefore, to complete the proof it remains only to show (\ref{five}). If we can show $N''$ is polynomial of degree $|\lm|-2$ then (\ref{five}) follows from Lemma \ref{poly1}.

If $(x,y)$ is a $\lm$-removable pair with $1 \leq x <y <n-1$ then $\Delta(\lm - \epsilon_x - \epsilon_y)$ is polynomial of degree $|\lm|-2$ since $\lm_n=0$. Recall the ordering $s_1, \dots, s_t$ of removable pairs for $\lm$ defined in the proof of Lemma \ref{BKlem}. Recall also from Lemma \ref{BKlem} that we have a filtration  $$0 \leq N_{(1)} \leq \dots \leq N_{(t)} = L_n(\lm) \otimes \bigwedge\nolimits^2 V_n^*$$ where $N_{(i)}/N_{(i-1)}$ is a possibly zero quotient of $\Delta_n(\lm- \epsilon_{x_i} - \epsilon_{y_i})$ where $s_i = (x_i,y_i)$. Let $1 \leq a \leq t$ be maximal so that $s_a = (x,y)$ with $y \in \{n, n-1\}$. Note that since $\lm_n =0$ we can see $n-1$ is not $\lm$-removable and it is only $\lm - \epsilon_j$ removable if $j=n$. Therefore the assumption that $y \in \{n, n-1\}$ is equivalent to $y=n$. 
 We will show by induction that for $i \leq a$ we have $N_{(i)} \leq N'$. Suppose $N_{(i-1)} \leq N'$ and consider the filtration $$0 \leq N_{(i)}/N_{(i-1)} \leq \dots \leq N/ N_{(i-1)} \cong N'/N_{(i-1)} \oplus N''.$$ Since $N_{(i)}/N_{(i-1)}$ is a quotient of $\Delta(\lm - \epsilon_{x_i} - \epsilon_{n})$ it contains only weights linked to $\lm - \epsilon_{x_i} - \epsilon_{n}$ and therefore it is contained in $N'/ N_{(i-1)}$. Hence $N_{(i)} \leq N'$ and the inductive step is complete. In particular $N_{(a)} \leq N'$ and therefore $N''$ is a quotient of $N/ N_{(a)}$ which is polynomial of degree $|\lm|-2$ as it has a filtration by quotients of $\Delta(\lm - \epsilon_x - \epsilon_y)$ for $1 \leq x < y <n-1$. Therefore $N''$ is polynomial and the proof is complete. 
 \end{proof}

It is worth noting that the condition that  $\mu \nsim \lm - \epsilon_i - \epsilon_n$ is necessary for the statement of the result to hold. In particular we were able to find a counterexample when $\mu \sim \lm - \epsilon_i - \epsilon_n$. Specifically, taking $n=5$ and $p=7$ with $\lm = (4,0,0,0,0)$ and $\mu = (2,0,0,0,0)$, one can check using MAGMA that $$[L_n(\lm) \otimes \bigwedge\nolimits^2 V_n^* : L_n(\mu)] = [L_n(\lm)^{(1,1)} : L_{n-2}(\overline{\overline{\mu}})] \neq [L_n(\lm)^{(1,1) \times} : L_{n-2}(\overline{\overline{\mu}})].$$

\section{\normalsize{SOME COMPOSITION MULTIPLICITIES FOR $L_n(\lm) \otimes \bigwedge\nolimits^2 V_n^*$}}\label{mainsec}

In this section we will motivate why Theorem \ref{key} is useful for understanding composition factors of tensor products. In particular we will describe a method for determining the composition multiplicities for some of the composition factors of the $\GL(n-2)$-module $L_n(\lm)^{(1,1) \times}$. We can then use Theorem \ref{key} to turn these into results about the tensor product $L_n(\lm) \otimes \bigwedge\nolimits^2 V_n^*$. 

Throughout this section we will consider the case where $\lm = (\lm_1,\dots,\lm_n) \in X^{+}(n)$ is such that $\lm_n = \lm_{n-1} = 0$. Let $\lm_s$ be the last nonzero entry in $\lm$. Finally let $\alpha(a,b) = \sum_{r=a}^{b-1} \alpha_r$ and define $\mu_i := \lm - \alpha(i,n-1) - \alpha(s, n)$ and $\omega_i := \lm - \alpha(i,n-1) - \alpha(s,n-1)$. Our main goal for this section is to describe a method for computing the composition multiplicities for composition factors $L_{n-2}(\omi)$ of the $\GL(n-2)$-module $L_n(\lm)^{(1,1) \times}$. 

Let us first describe  $\dim((L_n(\lm)^{(1,1) \times})_{\omi})$ for $1 \leq i \leq s-1$. The following results reduce this to understanding the weight spaces of the $\GL(n)$-module $L_n(\lm)$. 

\begin{lem}\label{munu} 
Let $\mu, \nu \in X(n)$ be the weights of two nonzero vectors in $L_n(\lm)^{(1,1)}$. Then $\mu = \nu$ if and only if $\overline{\overline{\mu}} = \overline{\overline{\nu}}$. 
\end{lem}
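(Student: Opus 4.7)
The proof plan is a direct unpacking of definitions, so there is no substantive obstacle to overcome.

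First I would recall that the $(1,1)$-level of $L_n(\lm)$ is, by definition,
\[
L_n(\lm)^{(1,1)} = \bigoplus_{\substack{\mu_{n-1} = \lm_{n-1} + 1 \\ \mu_n = \lm_n + 1}} L_n(\lm)_\mu,
\]
so any weight $\mu$ for which the corresponding weight space contributes a nonzero vector in $L_n(\lm)^{(1,1)}$ must satisfy $\mu_{n-1} = \lm_{n-1}+1$ and $\mu_n = \lm_n+1$. Under the standing hypothesis of Section \ref{mainsec}, namely $\lm_{n-1} = \lm_n = 0$, this pins down $\mu_{n-1} = \mu_n = 1$, and identically $\nu_{n-1} = \nu_n = 1$.

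Given that observation the lemma is immediate in both directions. The forward implication $\mu = \nu \Rightarrow \overline{\overline{\mu}} = \overline{\overline{\nu}}$ is trivial from the definition of $\overline{\overline{\,\cdot\,}}$ (truncation of the last two coordinates). For the converse, if $\overline{\overline{\mu}} = \overline{\overline{\nu}}$ then the first $n-2$ entries of $\mu$ and $\nu$ coincide; combined with the fact that their last two entries are both $(1,1)$ by the previous paragraph, this forces $\mu = \nu$.

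Since the statement reduces to the observation that fixing the last two entries makes the map $\mu \mapsto \overline{\overline{\mu}}$ injective on the weights appearing in $L_n(\lm)^{(1,1)}$, the only thing to be careful about is invoking the standing assumption $\lm_{n-1} = \lm_n = 0$ so that the common last two entries are explicitly $(1,1)$; no weight-theoretic input beyond the definition of the level is required.
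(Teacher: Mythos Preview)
Your proposal is correct and follows essentially the same idea as the paper: membership in $L_n(\lm)^{(1,1)}$ pins down the last two coordinates of the weight, so truncation is injective. The paper phrases this via simple-root expansions (forcing the coefficients of $\alpha_{n-1}$ and $\alpha_{n-2}$ to be $1$ and $2$), whereas you read it off directly from the coordinate definition of the $(1,1)$-level; the arguments are equivalent, and in fact the standing assumption $\lm_{n-1}=\lm_n=0$ is not even needed, since the last two entries are determined by $\lm$ in any case.
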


\begin{proof}
If $\mu = \nu$ then $\overline{\overline{\mu}} = \overline{\overline{\nu}}$ by definition. Suppose $\overline{\overline{\mu}} = \overline{\overline{\nu}}$. Let $\mu = (\mu_1, \dots, \mu_n)$ and $\nu = (\nu_1, \dots, \nu_n)$. Since they are weights of vectors in $L_n(\lm)^{(1,1)}$ we can write $\nu =  \lm - \alpha_{n-1} - 2 \alpha_{n-2} - \sum_{i=1}^{n-3} n_i \alpha_i$ and $\mu =  \lm - \alpha_{n-1} - 2 \alpha_{n-2} - \sum_{i=1}^{n-3} m_i \alpha_i$. In which case $\overline{\overline{\mu}} = \lm - \sum_{i=1}^{n-3} m_i \alpha_i - 2\epsilon_{n-2}$ and $\overline{\overline{\nu}} =\lm - \sum_{i=1}^{n-3} n_i \alpha_i - 2\epsilon_{n-2}$. Since $\overline{\overline{\mu}} = \overline{\overline{\nu}}$ this implies $m_i=n_i$ for $ 1 \leq i \leq n-3$ and hence $\mu = \nu$. 
\end{proof}

\begin{thm}\label{weightsspaces} Let $\lm \in X^{+}(n)$ be such that $\lm_{n} = \lm_{n-1} = 0$ and suppose $p \neq 2$. Let $\mu \in X(n)$ be such that $L_{n}(\lm)_{\mu} \subseteq L_n(\lm)^{(1,1)}$. Finally let $\omega$ be the weight $(\mu_1,\dots, \mu_{n-2},2,0) \in X(n)$. Then, $$ \dim((L_n(\lm)^{(1,1) \times})_{\overline{\overline{\mu}}}) = \dim(L_n(\lm)_{\mu}) - \dim(L_n(\lm)_{\omega}).$$ \end{thm}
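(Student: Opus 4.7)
The plan is to identify $(L_n(\lm)^{(1,1) \times})_{\overline{\overline{\mu}}}$ as the kernel of the map $E_{n-1,n} \colon L_n(\lm)_{\mu} \to L_n(\lm)_{\omega}$, prove that this map is surjective via a short $\mathfrak{sl}_2$-style computation, and then conclude by rank--nullity. By Lemma \ref{munu}, the only weight $\nu$ of $L_n(\lm)^{(1,1)}$ with $\overline{\overline{\nu}} = \overline{\overline{\mu}}$ is $\nu = \mu$ itself, so the $\GL(n-2)$-weight space $(L_n(\lm)^{(1,1)})_{\overline{\overline{\mu}}}$ coincides with the $\GL(n)$-weight space $L_n(\lm)_{\mu}$. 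By the definition of $L_n(\lm)^{(1,1) \times}$ this gives
\[ (L_n(\lm)^{(1,1) \times})_{\overline{\overline{\mu}}} = \{ v \in L_n(\lm)_{\mu} : E_{n-1,n} v = 0 \}. \]
Since $\mu_{n-1} = \mu_n = 1$, we have $\mu + \epsilon_{n-1} - \epsilon_n = \omega$, so $E_{n-1,n}$ really does send $L_n(\lm)_{\mu}$ into $L_n(\lm)_{\omega}$.

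Next I would prove surjectivity of this map. For $w \in L_n(\lm)_{\omega}$, the vector $E_{n-1,n} w$ lies in the weight space of $L_n(\lm)$ corresponding to $(\mu_1, \dots, \mu_{n-2}, 3, -1)$; since $\lm_n = 0$, Lemma \ref{neg} forces this weight space to vanish, so $E_{n-1,n} w = 0$. The commutator identity $E_{n-1,n} F_{n-1,n} - F_{n-1,n} E_{n-1,n} = H_{n-1} - H_n$ from Lemma \ref{relhyp}, together with $(H_{n-1} - H_n) w = (\omega_{n-1} - \omega_n) w = 2w$, then yields $E_{n-1,n} F_{n-1,n} w = 2w$. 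Since $p \neq 2$, the vector $v := \tfrac{1}{2} F_{n-1,n} w$ lies in $L_n(\lm)_{\mu}$ and satisfies $E_{n-1,n} v = w$, so $E_{n-1,n} \colon L_n(\lm)_{\mu} \to L_n(\lm)_{\omega}$ is surjective.

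Combining the two steps with rank--nullity gives
\[ \dim (L_n(\lm)^{(1,1) \times})_{\overline{\overline{\mu}}} = \dim \ker\bigl(E_{n-1,n}|_{L_n(\lm)_{\mu}}\bigr) = \dim L_n(\lm)_{\mu} - \dim L_n(\lm)_{\omega}, \]
as required. There is no serious obstacle in this argument; the only conceptual move is to recognise that the problem reduces to the $\mathfrak{sl}_2$-triple $(E_{n-1,n}, F_{n-1,n}, H_{n-1}-H_n)$, and that Lemma \ref{neg} (which relies on $\lm_n \geq 0$) automatically kills the would-be ``raised'' weight above $\omega$. Once those two ingredients are in place, the hypothesis $p \neq 2$ makes the resulting scalar $2$ invertible and the verification is immediate.
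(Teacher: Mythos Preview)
Your proof is correct and follows essentially the same argument as the paper: both identify the weight space via Lemma \ref{munu}, realise it as the kernel of $E_{n-1,n}\colon L_n(\lm)_\mu\to L_n(\lm)_\omega$, and establish surjectivity from the commutator $[E_{n-1,n},F_{n-1,n}]=H_{n-1}-H_n$ acting as $2$ on $L_n(\lm)_\omega$ (using Lemma \ref{neg} to kill $E_{n-1,n}$ on $L_n(\lm)_\omega$). The only cosmetic difference is that the paper phrases surjectivity as ``$E_{n-1,n}F_{n-1,n}$ is an isomorphism'' rather than explicitly writing down the preimage $\tfrac12 F_{n-1,n}w$.
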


\begin{proof}
Let us first show that $\dim((L_n(\lm)^{(1,1) \times})_{\overline{\overline{\mu}}}) = \dim (L_n(\lm)^{(1,1)\times} \cap L_n(\lm)_{\mu}).$ Indeed there is an obvious inequality coming from the fact that if $0 \neq v \in L_n(\lm)^{(1,1) \times}$ has $\GL(n)$-weight $\nu$ then it has $\GL(n-2)$-weight $\overline{\overline{\nu}}$. Furthermore, Lemma \ref{munu} implies that if $0 \neq v \in L_n(\lm)^{(1,1) \times}$ has $\GL(n)$-weight $\eta$ where $\overline{\overline{\eta}} = \overline{\overline{\nu}}$ then $\nu = \eta$ so $v \in L_n(\lm)_{\nu}$. 

We will now show that $\dim (L_n(\lm)^{(1,1)\times} \cap L_n(\lm)_{\mu}) = \dim(L_n(\lm)_{\mu}) - \dim(L_n(\lm)_{\omega}).$ Consider the map $E_{n-1n}: L_n(\lm)_{\mu} \rightarrow L_n(\lm)_{\omega}$. Clearly $L_n(\lm)^{(1,1) \times} \cap L_n(\lm)_{\mu}$ is the kernel of this map. We will show it is surjective from which the statement follows. We also have a map $F_{n-1n}:L_n(\lm))_{\omega} \rightarrow L_n(\lm)_{\mu}$. If we can show the map $E_{n-1n} F_{n-1n}$  is an isomorphism then it follows that $E_{n-1n}$ is surjective. Therefore it is enough to show that if $v_1, \dots, v_s$ is a basis of $L_n(\lm)_{\omega}$ then $E_{n-1n}F_{n-1n} v_1, \dots, E_{n-1n}F_{n-1n} v_s$ is also a basis of $L_n(\lm)_{\omega}$. Note that since $\lm_n =0$ and $\omega_{n} =0$ it follows from Lemma \ref{neg} that $E_{n-1n} L_n(\lm)_{\omega} =0$. Recalling the relations in Lemma \ref{relhyp} and the action of $H_i$ discussed in Section \ref{hypgln} we see that \begin{align*} E_{n-1n} F_{n-1n} v_i - F_{n-1n}E_{n-1n} v_i &= H_{n-1} v_i - H_{n} v_i \\ &= 2 v_i.\end{align*} Therefore $E_{n-1n}F_{n-1n} v_i = 2 v_i$ and clearly $2v_1, \dots, 2v_s$ is a basis of $L_n(\lm)_{\omega}$. 
\end{proof}

Let us now describe $\dim(L_n(\lm)_{\mu_i})$ and $\dim(L_n(\lm)_{\omega_i})$ for $1 \leq i \leq s-1$. Observe that $\mu_i$ and $\omega_i$ are in the same Weyl orbit as $\mu^i  := \lm - \alpha(i,s+1) - \alpha(s,s+2)$ and $\omega^i := \lm - \alpha(i,s+1) - \alpha(s,s+1)$ respectively. Therefore it is enough to describe $\dim(L_n(\lm)_{\mu^i})$ and $\dim(L_n(\lm)_{\omega^i})$. To do so we will first describe the dimension of the weight space $L_n(\lm)_{\lm -\alpha(i,j)}$ for $1 \leq i < j \leq s+1$. When $\lm_{s} =1$ we note that  $\dim(L_n(\lm)_{\mu^i}) = \dim(L_n(\lm)_{\lm -\alpha(i,s+1)})$ since the defining weights are in the same Weyl orbit. 

For $1 \leq i \leq k < j$, let $X_{j-1}^{i,j}(k) = \lm_k - \lm_j $ and $Y_{j-1}^{i,j}(k) = - (\lm_k - \lm_j)$. For $i \leq u  < j-1$ let $u_{i,j}^* = j-1 -u +i$ and define $$X_u^{i,j}(k) = \begin{pmatrix} X_{u+1}^{i,j}(k) & Y_{u+1}^{i,j}(u_{i,j}^*) \\ X_{u+1}^{i,j}(k) + Y_{u+1}^{i,j}(u_{i,j}^*) & X^{i,j}_{j-1}(u_{i,j}^*) (X_{u+1}^{i,j}(k) + Y_{u+1}^{i,j}(u_{i,j}^*) )\end{pmatrix}$$ where $$Y_u^{i,j}(k) = \begin{pmatrix} Y_{u+1}^{i,j}(k) & -Y_{u+1}^{i,j}(k) \\ 0 & Y^{i,j}_{j-1}(k) (X_{u+1}^{i,j}(i) + Y_{u+1}^{i,j}(u_{i,j}^*)) \end{pmatrix}.$$ For brevity let $X^{i}_u(k) := X^{i,s+1}_u(k)$ and $Y^{i}_u(k) := Y^{i,s+1}_u(k)$. Let $$M^{i} = \begin{pmatrix} \lm_s X_{i+1}^i(i) & Y_{i+1}^i(s) &(\lm_s -1) Y_{i+1}^i(s) \\ Y_{i+1}^i(s) & \lm_s X_{i+1}^i(i) & (\lm_s -1) Y_{i+1}^i(s) \\ \lm_s(X_{i+1}^i(i) + Y_{i+1}^i(s)) & \lm_s(X_{i+1}^i(i) + Y_{i+1}^i(s)) &\lm_s (\lm_s -1) (X_{i+1}^i(i) + Y_{i+1}^i(s)) \end{pmatrix} $$ and finally let $$W^{i} = \begin{pmatrix} \lm_s X_{i+1}^{i}(i) + Y_{i+1}^{i}(s) & (\lm_s -1) Y_{i+1}^{i}(s) \\ \lm_s(X_{i+1}^{i}(i) + Y_{i+1}^{i}(s) ) & \frac{1}{2} \lm_s (\lm_s -1) (X_{i+1}^{i}(i) + Y_{i+1}^{i}(s) ) \end{pmatrix}.$$

From now on define $x_{ij} := \rank(X_{i}^{i,j}(i) \mod p)$, $m_s^i : = \rank( M^i \mod p)$ and $w_s^i :=\rank(W^i \mod p)$.

\begin{thm}\label{rankdims}
For $\lm \in X^{+}(n)$ such that $\lm_s$ the last nonzero entry in $\lm$ and $s < n-1$ we have, \begin{enumerate} \setlength\itemsep{-1em} \item $\dim(L_n(\lm)_{\lm - \alpha(i,j)}) = x_{i,j}$, \\ \item $\dim(L_n(\lm)_{\mu^i}) = m_s^i$ and \\ \item $\dim(L_n(\lm)_{\omega^i}) = w_s^i$. \end{enumerate} 
\end{thm}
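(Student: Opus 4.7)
The overall strategy is to realise the weight-space dimension $\dim L_n(\lm)_\mu$ as the rank of a matrix and then to identify this matrix with $X_i^{i,j}(i)$, $M^i$ or $W^i$ by induction. Since $L_n(\lm)$ is the image of the canonical homomorphism $\phi:\Delta_n(\lm)\to\nabla_n(\lm)$, the induced map $\phi_\mu:\Delta_n(\lm)_\mu\to\nabla_n(\lm)_\mu$ has rank equal to $\dim L_n(\lm)_\mu$. Equivalently, $\phi$ is encoded by the contravariant form $\langle\cdot,\cdot\rangle$ on $\Delta_n(\lm)$ characterised by $\langle v_\lm,v_\lm\rangle=1$ and $\langle F_{ab}u,v\rangle=\langle u,E_{ab}v\rangle$, so it suffices to write down a convenient spanning set of $\Delta_n(\lm)_\mu$ and compute its Gram matrix.

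For part $(1)$, the weight $\lm-\alpha(i,j)=\lm-\epsilon_i+\epsilon_j$ lies below $\lm$ by a single positive root and $\Delta_n(\lm)_{\lm-\alpha(i,j)}$ is spanned by the $2^{j-i-1}$ monomials $F_{i_0 i_1}F_{i_1 i_2}\cdots F_{i_{r-1} i_r}v_\lm$ with $i=i_0<i_1<\cdots<i_r=j$, equivalently indexed by subsets of $\{i+1,\ldots,j-1\}$. To evaluate the contravariant form on two such vectors I would repeatedly commute the $E$'s through the $F$'s using Lemma \ref{relhyp}; the only nonzero contributions come from $[E_{ab},F_{ab}]=H_a-H_b$, which on $v_\lm$ evaluates to scalars of the shape $\lm_a-\lm_b$. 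Inducting on $j-i$ and splitting paths according to whether a given index $u\in\{i+1,\ldots,j-1\}$ is used or skipped reproduces exactly the $2\times 2$ block form defining $X_u^{i,j}(k)$; taking $u=i$ gives $X_i^{i,j}(i)$ as the Gram matrix and proves $(1)$.

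For parts $(2)$ and $(3)$, the weights $\mu^i$ and $\omega^i$ are obtained from $\lm$ by subtracting two positive roots ($\epsilon_i-\epsilon_{s+1}$ together with $\epsilon_s-\epsilon_{s+2}$ for $\mu^i$, and with $\epsilon_s-\epsilon_{s+1}$ for $\omega^i$). Spanning sets for the corresponding weight spaces of $\Delta_n(\lm)$ are indexed by pairs of paths as in $(1)$, together with ``merged'' monomials in which the two $F$-strings share a vertex, and the divided-power structure gives the endpoint at $s$ a distinguished role with factors $\lm_s$ and $\lm_s-1$ recording how many times $F_{s,s+1}$ is applied. Rerunning the same commutator computation on these spanning sets produces Gram matrices with exactly the $3\times 3$ and $2\times 2$ block structures of $M^i$ and $W^i$, with the scalar prefactors $\lm_s$, $\lm_s-1$ and $\tfrac{1}{2}\lm_s(\lm_s-1)$ (well defined because $p\neq 2$) coming from the action of divided powers on $v_\lm$.

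The main obstacle will be the careful matching of these Gram-matrix computations to the explicit recursive definitions: one must track how divided powers interact with the commutator relations (producing the mixed entries in $M^i$ and $W^i$), control the sign cancellations between the ``$X$'' and ``$Y$'' blocks, and deal with the boundary case $\lm_s=1$ where several bottom blocks collapse. A useful sanity check is that when $\lm_s=1$ the rank of $M^i$ drops to that of its top-left $X_{i+1}^i(i)$-shaped block, recovering the equality $\dim L_n(\lm)_{\mu^i}=\dim L_n(\lm)_{\lm-\alpha(i,s+1)}$ mentioned just before the theorem.
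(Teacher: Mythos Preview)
Your proposal is correct and follows essentially the same route as the paper: both realise $\dim L_n(\lm)_\mu$ as the rank of the Gram matrix of the contravariant form on a PBW-type spanning set of $\Delta_n(\lm)_\mu$, then identify this Gram matrix with $X_i^{i,j}(i)$, $M^i$, $W^i$ via an induction that exactly mirrors the recursive definitions. The paper phrases the induction for part (1) slightly differently (inducting on the largest element $l$ allowed in the indexing subset rather than on $j-i$), and for parts (2)--(3) it is more explicit than you are about the spanning sets: rather than ``pairs of paths together with merged monomials'', the paper indexes $I_{\mu^i}(\lm)$ by a subset $S\subset(i,s)$ together with one of three fixed tails $x_1,x_2,x_3$ (and two tails for $\omega^i$), which is what produces the $3\times3$ and $2\times2$ block shapes directly; your description would have to be sharpened to exactly this to carry out the computation, but the mechanism you outline is the right one.
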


\begin{proof}
Recall that the hyperalgebra $U(n,\F)$ is spanned by the symbols $E_{ij}^{(k)}, F_{ij}^{(k)}$ and $H_i^{(k)}$ for $1 \leq i < j \leq n$ and $k>0$. Let us fix an order on the pairs $(i,j)$ defining the symbols $E_{ij}^{(k)}$ and $F_{ij}^{(k)}$. We say $(i,j)$ is less than $(a,b)$ if either $j<b$ or $j=b$ and $i<a$. The subalgebras $U^+(n)$ and $U^-(n)$ can be generated by ordered symbols given by tuples $\underline{x} = (k_1,\dots,k_m)$ where $E_{\underline{x}} = E_{12}^{(k_1)} \dots E_{n-1n}^{(k_m)}$ and $F_{\underline{x}} = F_{12}^{(k_1)} \dots F_{n-1n}^{(k_m)}.$

Let $v^{+}$ be a weight vector for $L_n(\lm)$ of weight $\lm$. For a weight $\mu$ for $L_n(\lm)$ let $I_{\mu}(\lm)$ denote the set of elements $F_{\underline{x}}$ of $U^{-}(n)$ such that $F_{\underline{x}} v^{+} \in L_n(\lm)_{\mu}$. For $F_{\underline{x}}, F_{\underline{y}} \in I_{\mu}(\lm)$ it follows from Kostant's Theorem that $E_{\underline{x}} F_{\underline{y}} v^+ = n_{\underline{x}\underline{y}} v^{+}$ where $n_{\underline{x}\underline{y}}$ is an integer. The dimension of $L_n(\lm)_{\mu}$ is given by the $\mod p$ rank of the matrix $X_{\mu} = (n_{\underline{x}\underline{y}})_{\underline{x}\underline{y}}$ ranging over all possible $F_{\underline{x}}, F_{\underline{y}} \in I_{\mu}(\lm)$ (see for example Chapter 26 and 27 in \cite{HUM1972}). We therefore need to describe the set $I_{\mu}(\lm)$ and the matrix $X_{\mu}$ for $\mu = \lm - \alpha(i, j)$,  $\lm - \alpha(i,s+1)-\alpha(s,s+2)$ and $\lm - \alpha(i,s+1) - \alpha(s,s+1)$. 

Let us first look at the case where $\mu = \lm - \alpha(i, j)$ . The possible $F_{\underline{x}} \in I_{\mu}(\lm)$ must be of the form $F_{is_1}F_{s_1 s_2} \dots F_{s_r j}$ where $S = \{s_1,\dots,s_r\}$ is an increasing subset of the interval $(i,j)$. Let us describe an ordering of the subsets of $(i,j)$ and therefore the elements of $I_{\mu}(\lm)$. For $S= \{s_1,\dots,s_r\}$ and $T = \{t_1,\dots, t_m\}$ distinct subsets of $(i,j)$ we say $S$ is below $T$ in the ordering if $S = \emptyset$, $t_m > s_r$ or $t_m = s_r$ and $S \ \{s_r\}$ is below $T \ \{t_r\}$. For example, the ordered sequence of subsets of $(s-2,s+1)$ would be $\emptyset, \{s-1\} , \{s\} \{s-1,s\}$. Let us identify $I_{\mu}(\lm)$ with the set of subsets of $(i,j)$ ordered in this way.

Let $N^{i,j}:=(n_{ST})_{S,T \in I_{\mu}(\lm)}$ be the matrix whose entries are the integers $n_{ST}$ such that $n_{ST} v^{+} = E_{is_1}E_{s_1s_2} \dots E_{s_r j} F_{i t_1}F_{t_1 t_1} \dots F_{t_m j} v^{+}$ for all $S,T \in I_{\mu}(\lm)$. The sets indexing the rows and columns are ordered as above. We will show inductively that $N^{i,j}= X^{(i,j)}_{i}(i)$.

For $i \leq  l \leq j-1$, let $I_l$ be the set of subsets of $(i,l]$. Further, let $I_l^{-}$ be the subset of $I_l$ consisting of only sets which contain $l$ and define $I_l^{+} := I_l / I_{l}^{-}$.

Let $N^{i,j}_{l}$ be the submatrix of $N^{i,j}$ containing the rows and columns indexed by the set $I_l$. We will prove by induction on $i< l \leq j-1$ that $N^{i,j}_{l} = X_{i+j-(l+1)}^i(i)$. Note that $E_{ij}F_{ij} v^{+} = (\lm_{i} - \lm_j ) v^{+}$ and hence $N^{i,j}_{i} = \lm_i - \lm_j = X^{i,j}_{j-1}(i)$. For the base case of our induction we will show $$N^{i,j}_{i+1} = \begin{pmatrix} X_{j-1}^{i,j}(i) & Y_{j-1}^{i,j}(i+1) \\ X_{j-1}^{i,j}(i)+Y_{j-1}^{i,j}(i+1) & X_{j-1}^{i,j}(i+1) (X_{j-1}^{i,j}(i)+Y_{j-1}^{i,j}(i+1) ) \end{pmatrix}$$ where $Y^{i,j}_{j-1}(i+1) = -(\lm_{i+1}-\lm_{j})$.

We have, \begin{align*} E_{ij}F_{i i+1}F_{i+1 j} v^{+} &= (F_{i i+1} E_{i j} - E_{i+1 j} ) F_{i+1 j} v^{+}  \\ & = -(\lm_{i+1} - \lm_j) v^{+},  \end{align*} \begin{align*} E_{i i+1}E_{i+1 j}F_{ij} v^{+} & = E_{i i+1} (F_{i j} E_{i+1 j} + F_{i i+1} ) v^{+} \\ &= (\lm_i - \lm_{i+1}) v^{+} \end{align*} and finally \begin{align*} E_{i i+1}E_{i+1 j}F_{i i+1}F_{i+1 j} v^{+} & =E_{i i+1}F_{i i+1}E_{i+1 j}F_{i+1 j} v^{+} \\ &= (\lm_{i+1} - \lm_j)( \lm_i - \lm_{i+1}) v^{+}. \end{align*}

Recall that for $i \leq u <j-1$, we denote $u_{ij}^* = j -1 +i - u$. Now let us assume $N^{i,j}_{l-k}  = X^{i,j}_{(l-k)_{i,j}^*}(i)$ for $1 \leq k \leq l-i$. In particular we assume that $(n_{ST})_{S,T \in I_{l-1}} = X^{i,j}_{(l-1)_{i,j}^*}(i)$ and hence $(n_{ST})_{S \in I_{l-1}^{+},T \in I_{l-1}^{-}} = Y^{i,j}_{(l-2)_{i,j}^*}(l-1).$  We will show \begin{enumerate} \setlength\itemsep{-1em} \item  $(n_{ST})_{S, T \in I_{l}^{+}} = X^{i,j}_{(l-1)_{i,j}^*}(i),$ \\ \item  $(n_{ST})_{S \in I_{l}^{+}, T \in I_{l}^{-}}  = Y^{i,j}_{(l-1)_{i,j}^*}(l),$ \\ \item $(n_{ST})_{S \in I_{l}^{-}, T \in I_{l}^{+}} = X^{i,j}_{(l-1)_{i,j}^*}(i) +  Y^{i,j}_{(l-1)_{i,j}^*}(l),$ \\ \item  $(n_{ST})_{S \in I_{l}^{-}, T \in I_{l}^{-}}  = \lm_{l} (X^{i,j}_{(l-1)_{i,j}^*}(i) +  Y^{i,j}_{(l-1)_{i,j}^*}(l))$ \end{enumerate} where $$Y^{i,j}_{(l-1)_{i,j}^*}(l) = \begin{pmatrix} Y^{i,j}_{(l-2)_{i,j}^*}(l) & - Y^{i,j}_{(l-2)_{i,j}^*}(l) \\ 0 & -(\lm_l-\lm_j) (X_{(l-2)_{i,j}^*}^{i,j}(i) + Y^{i,j}_{(l-2)_{i,j}^*}(l-1) )\end{pmatrix}.$$

Note that 1 is true by our inductive hypothesis since $I_{l-1} = I_{l}^{+}$. We will now prove 2. We note that $Y_{j-1}^{i,j}(l) = \frac{\lm_{l} - \lm_j}{\lm_{l-1}-\lm_j}Y_{j-1}^{i,j}(l-1)$ and if $Y_u^{i,j}(l) = \frac{\lm_{l}-\lm_j}{\lm_{l-1}-\lm_j}Y_{u}^{i,j}(l-1)$ then $$Y_{u-1}^{i,j}(l) = \begin{pmatrix} \frac{\lm_{l}-\lm_j}{\lm_{l-1}-\lm_j}Y_{u}^{i,j}(l-1) & -\frac{\lm_{l}-\lm_j}{\lm_{l-1}-\lm_j}Y_u^{i,j}(l-1) \\ 0 & - (\lm_l-\lm_j) ( X_{u}^{i,j}(i) + Y^{i,j}_{u}(u_{ij}^*)) \end{pmatrix} = \frac{\lm_{l}-\lm_j}{\lm_{l-1}-\lm_j}Y_{u-1}^{i,j}(l-1). $$ Therefore we have shown by induction that $Y^{i,j}_u(l) = \frac{\lm_{l} - \lm_j}{\lm_{l-1}-\lm_j}Y_u^{i,j}(l-1)$. 

By our inductive assumption we have that $ Y^{i,j}_{(l-2)_{ij}^*}(l-1) = (n_{S T\cup \{l-1\}})_{S,T \in I_{l-2}}$ and $(n_{S \cup \{l-1\} T})_{S,T \in I_{l-2}} = X^{i,j}_{(l-2)_{ij}^*}(i) + Y^{i,j}_{(l-2)_{ij}^*}(l-1)$ . Let $S= \{s_1,\dots,s_r\}$ and $T = \{t_1,\dots, t_m\}$ be sets in $I_{l-2}$, if there exists $1 \leq k \leq m$ such that  $s_r = t_{k}$ then $$n_{S T\cup \{l-1\}} v^{+}  = (-1)^{m-k+1} (\lm_{l-1}-\lm_j) E_{is_1} \dots E_{s_{r-1}s_r} F_{i t_1} \dots F_{t_{k-1} t_k} v^{+} $$ otherwise $n_{S T\cup \{l-1\}} v^{+}  = 0$. Furthermore, $$n_{S \cup \{l-1\} T} v^{+} = E_{is_1} \dots E_{s_r l-1} E_{l-1 j} F_{it_1} \dots F_{t_m j}v^{+} .$$ We can write 
\begin{align*}
n_{ST\cup\{l\}} v^{+} &= \begin{cases} 0 & s_r \notin T \\ (-1)^{m-k+1} (\lm_{l}-\lm_j) E_{is_1} \dots E_{s_{r-1}s_r} F_{i t_1} \dots F_{t_{k-1} t_k} v^{+} & \exists k, s_r = t_{k} \in T \end{cases} \\
& = \frac{\lm_l-\lm_j}{\lm_{l-1}-\lm_{j} }n_{S T\cup \{l-1\}} v^{+} \\
n_{ST\cup\{l-1,l\}}  v^{+}&= \begin{cases} 0 & s_r \notin T \\ (-1)^{m-k} (\lm_{l-1}-\lm_j) E_{is_1} \dots E_{s_{r-1}s_r} F_{i t_1} \dots F_{t_{k-1} t_k} v^{+} &  \exists k, s_r = t_{k} \in T \end{cases} \\
& = - \frac{\lm_l-\lm_j}{\lm_{l-1}-\lm_j} n_{S T\cup \{l-1\}} v^{+} \\
n_{S\cup \{l-1\} T\cup\{l\}} v^{+}&=  E_{is_1} \dots E_{s_{r} l-1} E_{l-1 j} F_{i t_1} \dots F_{t_m l} E_{l-1 l} F_{l j} v^{+}  \\
& = 0 \\
n_{S\cup \{l-1\} T\cup\{l-1,l\}} v^{+}&= E_{is_1} \dots E_{s_r l-1} E_{l-1j} F_{it_1} \dots F_{t_m l-1}F_{l-1l}F_{lj} v^{+}\\ & = - (\lm_l-\lm_j) E_{is_1} \dots E_{s_r l-1}F_{it_1} \dots F_{t_m l-1} v^{+}  \\ 
& = -(\lm_l-\lm_j) E_{is_1} \dots E_{s_r l-1} E_{l-1 j} F_{it_1} \dots F_{t_m j} v^{+} \\
&=- (\lm_l-\lm_j) (n_{S \cup \{l-1\} T}) v^{+} .
\end{align*} 

Combing these results we get that $$(n_{ST})_{S \in I_l^+ T \in I_l^{-}} = \begin{pmatrix} Y^{i,j}_{(l-2)_{ij}^*}(l) & - Y^{i,j}_{(l-2)_{ij}^*}(l) \\ 0 & -(\lm_l - \lm_j) (X^{i,j}_{(l-2)_{ij}^*}(i) + Y^{i,j}_{(l-2)_{ij}^*}(l-1)) \end{pmatrix} = Y^{i,j}_{(l-1)_{ij}^*}(l).$$

We will now show 3. As before let $S= \{s_1,\dots,s_r\}, T = \{t_1,\dots, t_m\}$ be sets in $I_{l-1}$. We have \begin{align*} n_{S \cup \{l\}T} v^{+} &= E_{i s_1} \dots E_{s_r l} E_{l j} F_{i t_1} \dots F_{t_m j}  v^{+}\\
&= E_{i s_1} \dots E_{s_r l} F_{i t_1} \dots F_{t_m l} v^{+} \\
& = \begin{cases} 0 & s_r \notin T \hspace{0.3 cm}  \text{and} \hspace{0.3 cm} s_r <t_m \\
 E_{i s_1} \dots E_{s_{r-1} s_r} F_{i t_1} \dots F_{t_{m-1} t_m} F_{t_m s_r} v^{+} &  s_r \notin T \hspace{0.3 cm}  \text{and} \hspace{0.3 cm} t_m < s_r \\
(-1)^{m-k} (\lm_{t_m} - \lm_l) E_{is_1} \dots E_{s_{r-1}s_r} F_{i t_1} \dots F_{t_{k-1} t_k} v^{+} & \exists k, s_r = t_{k} \in T \end{cases} \\
& = n_{ST} v^{+} + n_{S T \cup \{l\} }v^{+} .
\end{align*}

Since we have already shown $(n_{S T \cup \{l\} })_{S,T \in I_{l-1}} = (n_{ST}) _{S \in I_l^+ T \in I_l^{-}} = Y^{i,j}_{(l-1)_{ij}^*}(l)$ and $(n_{ST})_{S,T \in I_{l-1}} = X^{i,j}_{(l-1)_{ij}^*}(i)$. From this we can conclude 3. 

Finally to show 4 we again let $S= \{s_1,\dots,s_r\}, T = \{t_1,\dots, t_m\}$ be sets in $I_{l-1}$. We have \begin{align*} n_{S \cup \{l\} T \cup \{l\}} & = E_{i s_1} \dots E_{s_r l} E_{l j} F_{i t_1} \dots F_{t_m l } F_{l j}  v^{+}\\ & = (\lm_l-\lm_j) E_{is_1} \dots E_{s_r l} F_{i t_1} \dots F_{t_m l} v^{+} = (\lm_l-\lm_j)  n_{S \cup \{l\}T} v^{+}. 
\end{align*} 
This completes the proof that $N^{i,j}_{l} = X_{l_{i,j}^*}^{i,j}(i)$ and hence by induction $N^{i,j}= X^{ij}_{i}(i)$.

We now consider the case where $\mu_i = \lm -  \alpha(i,s+1) - \alpha(s, s+2)$. Recall that we denote $X^{i}_u(k) := X^{i,s+1}_u(k)$ and $Y^i_u(k) := X^{i,s+1}$. We can again describe the elements of $I_{\mu}(\lm)$ in terms of sets $S$, now in the interval $(i, s)$. Note that $F_{s+1s+2} v^{+} =0$ since $\lm_{s+1} =0$. Therefore we only wish to consider three distinct possibilities for elements of $I_{\mu}(\lm)$ , namely, \begin{align*}  F_{S x_1} &:= F_{is_1} \dots F_{s_r s+1} F_{s s+2} \\ F_{S x_2} &:= F_{is_1} \dots F_{s s+1} F_{s_r s+2} \\  F_{S x_3} &:= F_{is_1} \dots F_{s_r s} F_{s s+1}F_{s s+2}. \end{align*}

Let $X_{\mu}$ be the matrix $(m_{Sx_a Tx_b})_{S,T \in (i,s), a,b \in \{1,2,3\}}$ where the subsets of $(i,s)$ are ordered as in the previous part of the proof. We wish to show $X_{\mu} = M^i$.  For $S= \{s_1,\dots,s_r\}, T = \{t_1,\dots, t_m\}$ in $(i,s)$  let $n_{ST}$ be the values defined above (for the case where $\mu = \alpha(i,s+1)$). We have, \begin{align*}
m_{Sx_1Tx_1} v^{+} & = E_{is_1} \dots E_{s_{r}s+1} E_{s s+2} F_{i t_1} \dots F_{t_m s+1} F_{ss+2} v^{+} \\
& = \lm_s (n_{ST}) v^{+}\\
& = E_{is_1} \dots E_{s s+1} E_{s_r s+2} F_{i t_1} \dots F_{s s+1} F_{t_m s+2} v^{+} \\
&= m_{Sx_2Tx_2} v^{+} \\ 
m_{Sx_1Tx_2} v^{+} & = E_{is_1} \dots E_{s_{r}s+1} E_{s s+2} F_{i t_1} \dots F_{s s+1} F_{t_m s+2} v^{+} \\
& = n_{ST\cup \{s\}} v^{+}  \\
& = E_{is_1} \dots E_{s s+1} E_{s_r s+2} F_{i t_1} \dots F_{t_m s+1} F_{s s+2} v^{+} \\
& = m_{Sx_2Tx_1} v^{+} \\
m_{Sx_1 Tx_3} v^{+} & =  E_{is_1} \dots E_{s_{r}s+1} E_{s s+2} F_{it_1} \dots F_{t_m s} F_{s s+1}F_{s s+2}v^{+}  \\ 
& = (\lm_s -1) n_{S T \cup \{s\}} v^{+} \\
&=  E_{is_1} \dots E_{s s+1} E_{s_r s+2} F_{it_1} \dots F_{t_m s} F_{s s+1}F_{s s+2} v^{+} \\
&= m_{Sx_2Tx_3} v^{+} \\
m_{Sx_3 Tx_1} v^{+} & = E_{is_1} \dots E_{s_r s} E_{s s+1}E_{s s+2} F_{i t_1} \dots F_{t_m s+1} F_{s s+2} v^{+} \\
&= E_{is_1} \dots E_{s_r s} E_{s s+2}F_{i t_1} \dots F_{t_m s} F_{s s+2} v^{+} \\
&= n_{S \cup \{s\} T \cup \{s\}} v^{+} \\
&= E_{is_1} \dots E_{s_r s} E_{s s+1}F_{i t_1} \dots F_{t_m s} F_{s s+1} v^{+} \\
&=  E_{is_1} \dots E_{s_r s} E_{s s+1}E_{s s+2} F_{i t_1} \dots F_{s s+1} F_{t_m s+2} v^{+} \\
&= m_{Sx_3 Tx_2} v^{+} \\
m_{Sx_3 Tx_3} v^{+} & = E_{is_1} \dots E_{s_r s} E_{s s+1}E_{s s+2} F_{i t_1} \dots F_{t_m s} F_{s s+1} F_{s s+2} v^{+} \\
& = (\lm_s -1) n_{S \cup \{s\} T \cup \{s\}} v^{+} . 
\end{align*} From this we can conclude $X_{\mu} = M^i$.

Finally, let us consider the case where $\mu = \lm - \alpha(i,s+1) - \alpha(s,s+1)$. The elements of $I_{\mu}(\lm)$ are given by sets $S$ in $(i,s)$ and an elements $a \in \{1, 2\}$ where, for $S = \{s_1 \dots s_r\}$, we have \begin{align*} F_{Sx_1} &:= F_{is_1} \dots F_{s_r s+1} F_{s s+1} \\ F_{Sx_2} &:= F_{is_1} \dots F_{s_r s} F^{(2)}_{s s+1} . \end{align*}

Let $X_{\mu}$ be the matrix $(w_{Sx_aTx_b})_{S,T \in (i,s), a,b \in \{1,2\}}$ with the subsets ordered as before. For $S= \{s_1,\dots,s_r\}, T = \{t_1,\dots, t_m\}$ in $(i,s)$ let $n_{ST}$ be the value defined above. We have, 
\begin{align*} w_{Sx_1 Tx_1} v^{+} &= E_{is_1} \dots E_{s_r s+1} E_{s s+1} F_{i t_1} \dots F_{t_m s+1} F_{s s+1} v^{+} \\
&= (\lm_s n_{ST}) + n_{ST\cup\{s\}} v^{+} \\
w_{Sx_1 Tx_2} v^{+} &= E_{is_1} \dots E_{s_r s+1} E_{s s+1} F_{i t_1} \dots F_{t_m s} F^{(2)}_{s s+1} v^{+} \\
&= (\lm_{s} - 1) n_{ST\cup\{s\}} v^{+}  \\
w_{Sx_2 Tx_1} v^{+} &= E_{is_1} \dots E_{s_r s} E^{(2)}_{s s+1} F_{i t_1} \dots F_{t_m s+1} F_{s s+1} v^{+} \\
&= \lm_s (n_{ST} + n_{ST\cup\{s\}}) v^{+} \\
w_{Sx_2 Tx_2} v^{+} &= E_{is_1} \dots  E_{s_r s} E^{(2)}_{s s+1} F_{i t_1} \dots F_{t_m s} F^{(2)}_{s s+1} v^{+} \\
& = \frac{1}{2} \lm_s(\lm_s-1) ( n_{ST} + n_{ST\cup\{s\}} ) v^{+} . \\
\end{align*} From this we can conclude $X_{\mu} = W^i$ and the proof is complete. 

 \end{proof}

We can now define the composition multiplicities in terms of the values $m_s^i$, $w_s^i$ and $x_{i,j}$. Let $t_{s}^{s-1} := m_{s}^{s-1} - w_{s}^{s-1}$ and for $1 \leq i <s-1$ define $t_s^i := (m_s^i - w_s^i) - \sum_{r=i+1}^{s-1} t_s^r x_{i,r}$. 

\begin{thm} \label{multnm2}
Let $\lm$ and $\mu_i$ be as defined above with $\lm_s$ the last nonzero entry of $\lm$ and suppose $p \neq 2$. Then, $$ [L_n(\lm)^{(1,1) \times} : L_{n-2}(\omi) ] = t_s^i.$$
\end{thm}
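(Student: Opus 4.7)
The plan is a reverse induction on $i$, descending from $i = s-1$ to $i = 1$, with the main inputs being Theorems \ref{weightsspaces} and \ref{rankdims}.

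The first ingredient is the weight-space dimension
\[
\dim\bigl(L_n(\lm)^{(1,1)\times}\bigr)_{\overline{\overline{\mu_i}}} = m_s^i - w_s^i.
\]
To establish this I would apply Theorem \ref{weightsspaces} with $\mu = \mu_i$, observing that the auxiliary weight $\omega$ in that theorem agrees with $\omega_i$, and then combine with Theorem \ref{rankdims}(2)--(3) using that $\mu_i$ and $\omega_i$ lie in the same Weyl orbits as $\mu^i$ and $\omega^i$.

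The second ingredient is the standard character-theoretic identity
\[
m_s^i - w_s^i = \sum_{\nu} [L_n(\lm)^{(1,1)\times} : L_{n-2}(\nu)] \cdot \dim L_{n-2}(\nu)_{\overline{\overline{\mu_i}}},
\]
where the sum is over dominant $\GL(n-2)$-weights $\nu \geq \overline{\overline{\mu_i}}$.  A careful inspection of the $\GL(n-2)$-weights of $L_n(\lm)^{(1,1)\times}$ (obtained from $\GL(n)$-weights of $L_n(\lm)^{(1,1)}$ by dropping the last two entries, subject to Lemma \ref{neg} and the constraint $\lm_{n-1} = \lm_n = 0$) shows that the only $\nu$ contributing are $\overline{\overline{\mu_r}}$ for $r \in \{i, i+1, \dots, s-1\}$.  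With this in hand the base case $i = s-1$ is immediate: no $r > s-1$ contributes, and so $[L_n(\lm)^{(1,1)\times} : L_{n-2}(\overline{\overline{\mu_{s-1}}})] = m_s^{s-1} - w_s^{s-1} = t_s^{s-1}$.

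For the inductive step, assume the claim for all $r > i$.  Combined with the identification $\dim L_{n-2}(\overline{\overline{\mu_r}})_{\overline{\overline{\mu_i}}} = x_{i,r}$ (supplied by Theorem \ref{rankdims}(1) applied to the $\GL(n-2)$ module $L_{n-2}(\overline{\overline{\mu_r}})$), the decomposition above becomes
\[
m_s^i - w_s^i = [L_n(\lm)^{(1,1)\times} : L_{n-2}(\overline{\overline{\mu_i}})] + \sum_{r=i+1}^{s-1} t_s^r x_{i,r},
\]
and rearranging matches the recursive definition of $t_s^i$.

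The main obstacle is the second of these identifications: Theorem \ref{rankdims}(1) applied to $L_{n-2}(\overline{\overline{\mu_r}})$ expresses the dimension in question as the rank (mod $p$) of a matrix built from the entries of $\overline{\overline{\mu_r}}$, whereas the quantity $x_{i,r}$ in the recursion is defined from the matrix $X^{i,r}_i(i)$ built from the entries of $\lm$.  Since $(\overline{\overline{\mu_r}})_k = \lm_k$ for $k \notin \{r,s\}$ but $(\overline{\overline{\mu_r}})_r = \lm_r - 1$ and $(\overline{\overline{\mu_r}})_s = \lm_s - 1$, the two matrices differ by controlled shifts, and the agreement of their mod $p$ ranks must be verified by unwinding the recursion defining $X^{i,j}_u(k)$.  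Ruling out spurious dominant weights in the identification step, and matching these rank computations, is where the bulk of the technical work will go.
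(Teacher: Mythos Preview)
Your strategy is the same as the paper's: compute $\dim(L_n(\lm)^{(1,1)\times})_{\overline{\overline{\mu_i}}}$ via Theorems \ref{weightsspaces} and \ref{rankdims}, restrict the contributing composition factors to the $L_{n-2}(\overline{\overline{\mu_r}})$ with $i \le r \le s-1$, and invert the resulting triangular system. Two remarks on where your outline diverges from the paper in detail.

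First, your ``careful inspection'' showing that only the $\overline{\overline{\mu_r}}$ with $r \ge i$ appear is precisely Proposition \ref{weightsabovemu}, which the paper proves separately and not just by weight bookkeeping. Two genuinely non-obvious points there are (a) ruling out $\GL(n-2)$-dominant weights $\overline{\overline{\nu}}$ with a ``gap'' in the $\alpha$-support between $i$ and $s-1$ (this uses a primitivity argument, not just Lemma \ref{neg}), and (b) showing that weights with $n_{s-1}=0$ contribute nothing to $L_n(\lm)^{(1,1)\times}$ when $\lm_s>1$ (this uses a rank comparison between $X_{\nu'}$ and $X_{\omega'}$). Do not expect this step to fall out of dominance considerations alone.

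Second, your worry about identifying $\dim L_{n-2}(\overline{\overline{\mu_r}})_{\overline{\overline{\mu_i}}}$ with $x_{i,r}$ is well-placed, and the paper does not address it: it simply asserts the equality. Your observation is correct that the matrix computing $\dim L_{n-2}(\overline{\overline{\mu_r}})_{\overline{\overline{\mu_r}}-\alpha(i,r)}$ via Theorem \ref{rankdims}(1) is built from the differences $(\overline{\overline{\mu_r}})_k - (\overline{\overline{\mu_r}})_r = \lm_k - \lm_r + 1$ for $i \le k < r$, whereas $X^{i,r}_i(i)$ uses $\lm_k - \lm_r$. These shifted matrices need not have equal rank modulo $p$ (already visible in the $1\times 1$ case $r=i+1$), so ``unwinding the recursion'' will not in general reconcile them. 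Your induction goes through cleanly if in the recursion defining $t_s^i$ one reads $x_{i,r}$ as the rank computed from $\overline{\overline{\mu_r}}$ rather than from $\lm$; the paper's formulation appears to elide this distinction.
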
 

The proof of Theorem \ref{multnm2} will follow from the results above and the following proposition.

\begin{prop}\label{weightsabovemu}
Let $\nu$ be the $\GL(n)$-weight of a nonzero weight vector in $L_n(\lm)^{(1,1) \times}$ such that $\omi \leq \on$ and $L_{n-2}(\on)$ is a composition factor of $L_n(\lm)^{(1,1)\times}$. Then $\nu = \mu_j$ for some $j \geq i$.
\end{prop}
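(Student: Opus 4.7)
The plan is to parametrize $\nu$ explicitly in terms of simple-root coefficients, reduce the claim to showing that a certain sequence of coefficients is non-decreasing, and then rule out any ``spike'' patterns using the isomorphism of Proposition~\ref{iso} together with the tensor-product filtration of Lemma~\ref{BKlem} and the linkage principle.

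Setting $\nu = \lm - \sum_{k=1}^{n-1} a_k \alpha_k$ with $a_k \in \Z_{\geq 0}$, the condition $\nu \in L_n(\lm)^{(1,1)\times}$ forces $a_{n-1} = 1$ and $a_{n-2} = 2$; Lemma~\ref{neg} applied to $\nu_k$ for $k > s$ forces $a_k = 2$ for $s \leq k \leq n-2$; and a partial-sum comparison of $\nu \leq \lm$ against $\on \geq \omi$, using the explicit shape of $\omi$, yields $a_k = 0$ for $k < i$ and $a_k \in \{0, 1\}$ for $i \leq k \leq s-1$. The claim $\nu = \mu_j$ for some $j \geq i$ is then equivalent to $(a_i, \ldots, a_{s-1})$ having the shape $(0, \ldots, 0, 1, \ldots, 1)$, and the remaining task is to exclude non-monotone patterns.

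For this I invoke the composition-factor hypothesis. Via Proposition~\ref{iso}, $L_n(\lm)^{(1,1)\times}$ is a $\GL(n-2)$-direct summand of $(L_n(\lm) \otimes \bigwedge^2 V_n^*)^{U_I^+}$ corresponding to the $T_2 = (0,0)$ part, and by left-exactness of $(-)^{U_I^+}$ together with Proposition~\ref{janprop}, the composition factor $L_{n-2}(\on)$ must arise from some $\GL(n)$-composition factor $L_n(\mu'')$ of $L_n(\lm) \otimes \bigwedge^2 V_n^*$ where $\mu'' := \nu - \epsilon_{n-1} - \epsilon_n$ is polynomial of degree $|\lm|-2$. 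Lemma~\ref{BKlem} then produces a $\lm$-removable pair $(x, y)$ with $\mu'' \leq \lm - \epsilon_x - \epsilon_y$ in $\GL(n)$-dominance, and the linkage principle gives $\mu'' \sim \lm - \epsilon_x - \epsilon_y$.

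Writing $\lm - \epsilon_x - \epsilon_y - \mu'' = \sum c_k \alpha_k$ translates the dominance condition $c_k \geq 0$ into explicit linear constraints on the $a_k$ depending on $(x, y)$; combining with the range of the $a_k$'s from the second step forces $x \geq i$ and $y \in \{s, n\}$. A residue comparison $cont_\alpha(\mu'') \equiv cont_\alpha(\lm - \epsilon_x - \epsilon_y) \pmod{p}$, using $p > 2$ to avoid degenerate coincidences, then rules out every spike pattern in $(a_i, \ldots, a_{s-1})$, forcing $\mu'' = \lm - \epsilon_p - \epsilon_s$ with $p \geq i$ and hence $\nu = \mu_p$. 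The main obstacle is the linkage analysis needed to eliminate removable pairs of the form $(x, n)$, which, unlike $(x, s)$, can accommodate certain spikes under subdominance alone and must be excluded via residues modulo $p$.
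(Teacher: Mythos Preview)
Your reduction in the first two paragraphs is correct and matches the paper: writing $\nu=\lm-\sum a_k\alpha_k$, one gets $a_{n-1}=1$, $a_k=2$ for $s\le k\le n-2$, $a_k=0$ for $k<i$, and $a_k\in\{0,1\}$ for $i\le k\le s-1$, so the task is exactly to rule out non-monotone (``spike'') patterns in $(a_i,\dots,a_{s-1})$.

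The remainder of your argument has a genuine gap. Your plan is: spike $\Rightarrow$ $L_n(\mu'')$ is a composition factor of $N=L_n(\lm)\otimes\bigwedge^2 V_n^*$ $\Rightarrow$ via Lemma~\ref{BKlem} there is a $\lm$-removable pair $(x,y)$ with $\mu''\le \lm-\epsilon_x-\epsilon_y$ and $\mu''\sim\lm-\epsilon_x-\epsilon_y$; then argue these two constraints are incompatible with a spike. But dominance plus linkage do \emph{not} force the pattern to be monotone. For a concrete obstruction, take $n=6$, $s=4$, $\lm=(7,5,3,2,0,0)$, $p=3$, and the spike $(a_1,a_2,a_3)=(1,0,1)$, giving $\mu''=(6,6,2,1,0,0)$. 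Dominance forces $x\ge 3$ and $y\in\{4,6\}$; the pair $(x,y)=(3,6)$ is $\lm$-removable, satisfies $\mu''\le \lm-\epsilon_3-\epsilon_6$, and one checks $cont_\alpha(\mu'')=cont_\alpha(\lm-\epsilon_3-\epsilon_6)$ for all $\alpha$ when $p=3$. So your residue comparison does not eliminate this spike, and you cannot conclude that $L_n(\mu'')$ fails to appear in $\Delta_n(\lm-\epsilon_3-\epsilon_6)$. The difficulty you flag with pairs $(x,n)$ is exactly where the approach breaks; ``$p>2$'' alone does not prevent such coincidences, and Proposition~\ref{weightsabovemu} carries no further hypotheses on $\lm$ or $p$ that would let you avoid them.

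The paper proceeds quite differently and never invokes linkage for this step. It treats the two failure modes separately: for $a_{s-1}=0$ it uses the explicit matrices from Theorem~\ref{rankdims} to show $\dim L_n(\lm)_\nu=\dim L_n(\lm)_\omega$, so by Theorem~\ref{weightsspaces} the $(1,1)\times$-weight space at $\on$ vanishes; for an internal zero it gives a direct primitive-vector argument (after Kleshchev) showing that no $\GL(n-2)$-primitive vector of weight $\on$ can exist in $L_n(\lm)$, contradicting the composition-factor hypothesis. Both arguments are uniform in $p>2$. If you want to salvage your route, you would need an input beyond subdominance and block-linkage to control composition factors of $\Delta_n(\lm-\epsilon_x-\epsilon_n)$, which is not available at this level of generality.
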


\begin{proof}
Since $\nu$ corresponds to a nonzero weight vector in $L_n(\lm)^{(1,1) \times}$ we can write $\nu = \lm - \sum_{r=1}^{n-1}n_r \alpha_r $ where $n_{n-2} = 2$ and $n_{n-1} =1$. Since $\lm_{n} =0$ and therefore the weights of $L_n(\lm)$ have nonnegative entries we note that whenever $\lm_r =0$ we have $n_{r-1} \geq n_r$. Recall that $\lm_r =0$ for $s+1 \leq r \leq n$ and therefore $n_r \geq 2$ for $s \leq r \leq n-3$.

We can write, $$ \on - \omi = -\sum_{r=1}^{i-1} n_r \alpha_r + \sum_{r=i}^{s-1} (1-n_r) \alpha_r + \sum_{r=s}^{n-3} (2-n_r) \alpha_r. $$ Since we assume $\omi \leq \on$ it follows that $n_r = 2$ for $s \leq r \leq n-3$, $n_r \leq 1$ for $i \leq r \leq s-1$ and finally $n_r =0$ for $1 \leq r \leq i-1$. Our result follows if we can show there exists some $i \leq j \leq s-1$ such that $n_r = 0$ for $i \leq r \leq j-1$ and $n_r = 1$ for $j \leq r \leq s-1$. Let $n_j$ be the first nonzero entry in $ (n_i,\dots,n_{s-1})$. We will show that $n_{s-1} =1$ and that there can be no zeros between $n_j$ and $n_{s-1}$. 

Let us first show that $n_{s-1} = 1$. If $\lm_s =1$ then this follows again from the fact that there are no nonzero weight spaces in $L_n(\lm)$ whose weights have negative entries. Now let us suppose $\lm_s >1$ and assume for contradiction that $n_{s-1} = 0$. Let $\omega = (\nu_1, \dots, \nu_{n-2}, 2,0)$ and define $\nu' = (\nu_1,\dots, \nu_{s},1,1,0,\dots,0)$ and $\omega' = (\nu_1,\dots, \nu_{s},2,0,0,\dots,0)$. Clearly $\omega$ and $\nu$ are in the same Weyl orbit as $\omega'$ and $\nu'$ respectively. Note that $\nu' = \lm - \sum_{r=1}^s n_r \alpha_r - \alpha_{s+1}$ and $\omega' =  \lm - \sum_{r=1}^s n_r \alpha_r$.

Observe that any element of $I_{\nu'}(\lm)$ can be written as $F_{\underline{x}}F_{ss+1}F_{ss+2}$ where any $F_{ij}^{(k)} \in F_{\underline{x}}$ is such that $j <s$. This follows from the fact that $\nu_s = \lm -2$ since $n_{s-1} =0$ and the fact that $F_{s+1}F_{s+2} v^{+} =0$. Let $I'_{\nu'}(\lm)$ be the set of all $F_{\underline{x}}$ such that $F_{\underline{x}}F_{ss+1}F_{ss+2} \in I_{\nu'}(\lm).$ Any element of $I_{\omega'}(\lm)$ can be written as $F_{\underline{x'}}F^{(2)}_{ss+1}$ where any $F_{ij}^{(k)} \in F_{\underline{x'}}$ is such that $j <s$ again since $\nu_s = \lm -2$.  Let $I'_{\omega'}(\lm)$ be the set of all $F_{\underline{x'}}$ such that $F_{\underline{x'}}F^{(2)}_{ss+1} \in I_{\omega'}(\lm)$. Since $\omega_i' = \nu_i = \nu_i'$ for $1 \leq i \leq s$ it follows that $I'_{\omega'}(\lm) = I'_{\nu'}(\lm)$. 

We have seen in the proof of Theorem \ref{rankdims} that $X_{\nu'} = \lm_s^2 - \lm_s (n_{\underline{x}\underline{y}})_{F_{\underline{x}},F_{\underline{y}} \in I'_{\nu'}(\lm)}$ and $X_{\omega'} = \frac{1}{2} (\lm_s^2 - \lm_{s})(n_{\underline{x}\underline{y}})_{F_{\underline{x}},F_{\underline{y}} \in I'_{\nu'}(\lm)}$. Noting that $p$ divides $(\lm_s^2-\lm_s)$ if and only if $p$ divides $ \frac{1}{2} (\lm_s^2 - \lm_{s})$ we deduce that the mod $p$ rank of $X_{\nu}$ and $X_{\omega'}$ are the same. This implies that $\dim(L_n(\lm)_{\nu}) = \dim(L_n(\lm)_{\nu'}) \dim(L_n(\lm)_{\omega'}) = \dim(L_n(\lm)_{\omega}) $ and hence by Theorem \ref{weightsspaces} we have that $\dim((L_n(\lm)^{(1,1) \times})_{\nu}) =0$. This is a contradiction and therefore we may assume $n_{s-1} =1$.

Now suppose that there exists some $j< m <s-1$ such that $n_m =0$ and $n_r =1$ for all $m <r \leq s-1$. We will show $L_{n-2}(\on)$ is not a composition factor of $L_n(\lm)^{(1,1) \times}$ (we believe this to be a well know result and the idea for this proof is taken from Lemma 7.4 in \cite{KLESH1997} but adapted for our circumstance). Suppose for a contradiction that it is. Then there exists some $0 \neq v \in L_n(\lm)_{\nu}$ which is $\GL(n-2)$-primitive. For any sequence $S = [s_1, \dots, s_f]$ let $F_S = F_{s_1 s_2} F_{s_3 s_4} \dots F_{s_{f-1} s_{f}}$. Suppose $S, S'$ are sequences of elements from $ \{j, \dots, m\}$ and $T , T'$ are sequences of possibly repeated elements from $\{m+1, \dots, n\}$. We can write $v = F_S F_T v^{+}$ where $F_S F_T v^{+} = F_T F_S v^{+}$ and $v^{+} = E_{S'} E_{T'} v$. Therefore we have $E_{S'} E_{T'} F_SF_T v^{+} = v^{+}$ and in particular $E_{T'} F_{T} E_{S'}F_S v^{+} \neq 0$. This implies $E_{S'}F_S v_{+} = cv^{+}$ where $c \neq 0$. However $v$ is $\GL(n-2)$-primitive and therefore $E_{S'} v = 0$. Therefore \begin{align*} 0 = F_S E_{S'} v = F_S E_{S'} F_{S} F_{T} v^{+} = F_S F_T E_{S'} F_S v^{+} = c (F_S F_T )v^{+} = cv \end{align*} and hence $c=0$ which is a contradiction. Therefore no such $m$ can exist and hence there can be no zeros between $n_j$ and $n_{s-1}$. 
\end{proof} 

\begin{proof}[Proof of Theorem \ref{multnm2}] 
We have seen above that $m_s^i -  w_s^i$ is the dimension of $(L_n(\lm)^{(1,1) \times})_{\omi}$ for all $1 \leq i <s$.  Furthermore, for $i < j <s$ we have $\omi = \omj - \alpha(i,j)$ and hence $\dim((L_{n-2}(\omj))_{\omi}) = x_{i,j}$. Therefore the result follows from Proposition \ref{weightsabovemu} by counting weight spaces. 
\end{proof}

Combining Theorems \ref{multnm2} and \ref{key} we can now deduce Theorem \ref{main}.

\begin{proof}[Proof of Theorem \ref{main}]
The conditions $1$ and $2$ of Theorem \ref{key} are clearly met by the $\lm$ and $\mu$ considered. Let $\alpha \in \{0,\dots,p-1\}$ be such that $\alpha \equiv p-n \mod p$. Note that $cont_{\alpha}(\lm - \epsilon_r - \epsilon_n) < cont_{\alpha}(\lm)$ for $(r,n)$ a $\lm$-removable pair. However since $\lm_s - s$ and $\lm_i - i$ are not congruent to $\alpha$ we see that $cont_{\alpha}(\lm - \epsilon_i- \epsilon_s) = cont_{\alpha}(\lm)$. Therefore condition $3$ of Theorem \ref{key} is also met in this case. The result is then a direct consequence of Theorems \ref{multnm2} and \ref{key}. 
\end{proof}

Let us finish with an example and concrete result about the multiplicities of certain composition factors of $L_n(\lm) \otimes \bigwedge\nolimits^2 V_n^*$. 

\begin{exmp}
Let $\lm$ be as in Theorem \ref{main} with $i = s-1$. Then, $$[L_n(\lm) \otimes \bigwedge\nolimits^2 V^*_n : L_n(\lm - \epsilon_i - \epsilon_s)] = \begin{cases} 0 & p | \lm_s \hspace{0.5cm} \text{or} \hspace{0.5cm} p | \lm_{s-1} + 1 \\ 1 & \text{otherwise}. \end{cases}$$
\end{exmp}

\begin{proof}
Note that $t_s^{s-1} = m_s^{s-1} - w_s^{s-1}$ therefore it remains only to calculate the values. We have, $$M^{s-1} = \begin{pmatrix} \lm_s \lm_{s-1} & -\lm_s & -(\lm_s -1) \lm_s \\ 
-\lm_s & \lm_s \lm_{s-1} & -(\lm_s -1) \lm_s \\
\lm_s(\lm_{s-1} - \lm_{s}) & \lm_s(\lm_{s-1} - \lm_{s}) & \lm_s( \lm_s-1) (\lm_{s-1} - \lm_{s}) \end{pmatrix}$$ and $$W^{s-1} = \begin{pmatrix} \lm_s (\lm_{s-1} -1) & -(\lm_s -1) \lm_s \\ 
\lm_s(\lm_{s-1} - \lm_{s}) & \frac{1}{2} \lm_s( \lm_s-1) (\lm_{s-1} - \lm_{s}) \end{pmatrix}.$$
Therefore $$m_s^{s-1}  = \begin{cases} 0 & p|\lm_s \\  1 & p| \lm_{s-1} +1 \hspace{0.3cm} \text{or} \hspace{0.3cm} (p| \lm_{s-1} -\lm_s  \hspace{0.3cm} \text{and} \hspace{0.3cm} p| \lm_s -1 )\\ 2 & p| \lm_{s-1} - \lm_s  \hspace{0.3cm} \text{or} \hspace{0.3cm} p | \lm_s -1 \\ 0 & \text{otherwise} \hspace{0.3cm} \end{cases} $$ and $$w_s^{s-1} = \begin{cases} 0 & p|\lm_s \hspace{0.3cm} \text{or} \hspace{0.3cm} (p | \lm_s -1 \hspace{0.3cm} \text{and} \hspace{0.3cm} p| \lm_{s-1} - \lm_s) \\ 1 & p| \lm_{s-1} - \lm_s \hspace{0.3cm} \text{or} \hspace{0.3cm} p| \lm_s -1 \hspace{0.3cm} \text{or} \hspace{0.3cm} p | \lm_{s-1} +1 \\ 2 & \text{otherwise} \hspace{0.3cm} \end{cases} $$
Subtracting these values and applying Theorem \ref{main} we get the stated result. 

\end{proof}

\bibliography{Bibliography}

\bibliographystyle{amsplain}

\end{document}